\providecommand{\noopsort}[1]{} 
	\def\CC{{\ifmmode{\mathbbm{C}}\else{$\mathbbm{C}$}\fi}}
	\def\EE{{\ifmmode{\mathbbm{E}}\else{$\mathbbm{E}$}\fi}}
	\def\FF{{\ifmmode{\mathbbm{F}}\else{$\mathbbm{F}$}\fi}}
	\def\HH{{\ifmmode{\mathbbm{H}}\else{$\mathbbm{H}$}\fi}}
	\def\KK{{\ifmmode{\mathbbm{K}}\else{$\mathbbm{K}$}\fi}}
	\def\NN{{\ifmmode{\mathbbm{N}}\else{$\mathbbm{N}$}\fi}}
	\def\QQ{{\ifmmode{\mathbbm{Q}}\else{$\mathbbm{Q}$}\fi}}
	\def\RR{{\ifmmode{\mathbbm{R}}\else{$\mathbbm{R}$}\fi}}
	\def\TT{{\ifmmode{\mathbbm{T}}\else{$\mathbbm{T}$}\fi}}
	\def\UU{{\ifmmode{\mathbbm{U}}\else{$\mathbbm{U}$}\fi}}
	\def\ZZ{{\ifmmode{\mathbbm{Z}}\else{$\mathbbm{Z}$}\fi}}
	\newcommand{\tend}[2]{\xrightarrow[#1\to#2]{}}
	\newcommand{\equival}[2]{\mathop{\sim}_{#1\to#2}}
	\newcommand{\Id}{\mathop{\mbox{Id}}}
	\newcommand{\ind}[1]{\mathbbmss{1}_{#1}}
	\newcommand{\Yg}[1]{Y^{(#1)}}
	\newcommand{\ph}[1]{\phi_{#1}}
    \newcommand{\bfu}{\boldsymbol{u}}
    \newcommand{\MRT}{\rm MRT}
\newtheorem{Th}{Theorem}[section]
\newtheorem{Prop}[Th]{Proposition}
\newtheorem{Lemma}[Th]{Lemma}
\theoremstyle{definition}
\newtheorem{Remark}[Th]{Remark}
\newtheorem{Def}{Definition}[section]
\newtheorem{Cor}[Th]{Corollary}
\newcommand{\beq}{\begin{equation}}
\newcommand{\eeq}{\end{equation}}
\def\scalar(#1,#2){(#1\mid#2)}
\newcommand{\xbm}{(X,{\cal B},\mu)}
\newcommand{\ycn}{(Y,{\cal C},\nu)}
\newcommand{\ot}{\otimes}
\newcommand{\ov}{\overline}
\newcommand{\bs}{\mathbb{S}}
\newcommand{\R}{{\mathbb{R}}}
\newcommand{\T}{{\mathbb{T}}}
\newcommand{\C}{{\mathbb{C}}}
\newcommand{\Z}{{\mathbb{Z}}}
\newcommand{\N}{{\mathbb{N}}}
\newcommand{\D}{{\mathbb{D}}}
\newcommand{\vep}{\varepsilon}
\newcommand{\mob}{\boldsymbol{\mu}}
\newcommand{\lio}{\boldsymbol{\lambda}}
\newcommand{\Leb}{{\rm Leb\,}}
\begin{document}
\title{On Furstenberg systems of aperiodic multiplicative functions\\ of Matom\"aki, Radziwi\l\l \ and Tao}
\author{Aleksander Gomilko \and Mariusz Lema\'nczyk \and Thierry de la Rue}

\maketitle

\begin{abstract} It is shown that in a class of counterexamples to Elliott's conjecture by Matom\"aki, Radziwi\l\l \ and Tao \cite{Ma-Ra-Ta}, the Chowla conjecture holds along a subsequence.\end{abstract}

\section{Introduction} The celebrated Chowla conjecture \cite{Ch} from 1965 predicts that for the arithmetic Liouville function $\lio$, we have
\beq\label{chowla1}
\lim_{N\to\infty}\frac1N\sum_{1\le n\le N}\lio(n+a_1)\cdot\ldots\cdot
\lio(n+a_k)=0\eeq
for any choice of $0\leq a_1<\ldots<a_k$, $k\geq1$.
As noticed by Sarnak \cite{Sa} this is equivalent to saying that the Liouville subshift $X_{\lio}\subset\{-1,1\}^{\N}$ is actually the full shift and $\lio$ is a generic point
for the Bernoulli measure $(1/2,1/2)^{\otimes{\N}}$, which is the Haar measure on $\{-1,1\}^{\N}$.
When we consider a more general multiplicative function $\bfu:\N\to\bs^1$ which is {\bf aperiodic} (i.e.\ its mean along any arithmetic progression exists and equals zero), and if all powers $\bfu^k$ ($k\geq1$) are still aperiodic, then the analog of~\eqref{chowla1} for $\bfu$ becomes
$$\lim_{N\to\infty}\frac1N\sum_{1\le n\le N}
\bfu^{r_1}(n+a_1)\cdot\ldots\cdot\bfu^{r_k}(n+a_k)
\overline{\bfu^{s_1}(n+b_1)\cdot\ldots\cdot\bfu^{s_\ell}(n+b_\ell)}=0$$
for all powers $r_u,s_t\in\N$ and $\{a_1,\ldots,a_k\}\cap \{b_1,\ldots, b_{\ell}\}=\emptyset$, which in turn means that $\bfu$ is a generic point for $\left(\Leb_{\bs^1}\right)^{\otimes{\N}}$, the Haar measure on $(\bs^1)^{\N}$.

 This more general form of Chowla conjecture is still a particular case of Elliott's conjecture \cite{El1}, \cite{El2}, \cite{El3} which deals with several (possibly different) multiplicative functions (one of which is aperiodic).
Similar conjectures can be formulated for multiplicative functions taking values in the unit disk $\D$, but in this case we have to consider properties of $\bfu$ \emph{relative to} $|\bfu|$, where the latter point is always generic for an ergodic measure (with respect to the left shift), often called the Mirsky measure, on $[0,1]^{\N}$ (see \cite{Sa} for the case of the M\"obius function $\mob$ or e.g.\ \cite{Be-Ku-Le-Ri} for a general case).

In \cite{Ma-Ra-Ta}, Matom\"aki, Radziwi\l\l \ and Tao gave a class of examples of  multiplicative and aperiodic\footnote{Their functions are totally aperiodic, that is, all powers $\bfu^k$, $k\geq1$ are also aperiodic.}  $\bfu:\N\to \bs^1\cup\{0\}$ for which the sequence $$\Big(\frac1N\sum_{1\leq n\leq N}\bfu(n)\overline{\bfu(n+1)}\Big)_{N\geq1}$$ does not converge to zero. This disproved the general form of Chowla conjecture for aperiodic $\bfu$, and in particular,  they disproved Elliott's conjecture. In their class $|\bfu|=\mob^2$ but
they also explained that their example could be modified to get a completely multiplicative $\bs^1$-valued aperiodic function. Finally, this lead them to reformulate Elliott's conjecture in the sense that it is expected to be valid for a subclass of aperiodic functions, the so-called  \emph{strongly aperiodic functions}, and till today this stronger form of Elliott's conjecture is open and under intensive study \cite{Fr}, \cite{Fr-Ho}, \cite{Ma}, {\cite{Ta}-\cite{Ta-Te}.

In this note, we will have a closer look at the counterexample given by Matom\"aki, Radziwi\l\l \
and Tao. To avoid some technical, rather notational, issues,
we will  deal with the completely multiplicative, $\bs^1$-valued
version of their construction, which we call here \emph{the MRT class}
(see the complete description of such multiplicative functions
in Section~\ref{sec:MRTclass}).
If $\bfu\in {\rm MRT}$ then it cannot be generic for the
Haar measure on $(\bs^1)^{\N}$, but still we can ask for which
measures on $(\bs^1)^{\N}$ it is quasi-generic. Each such measure
yields a so-called measure-theoretic \emph{Furstenberg system}
(see Section~\ref{s:Furstenberg}).
In particular, the arguments given in~\cite{Ma-Ra-Ta}
prove that there exists an increasing sequence of integers giving
rise to a Furstenberg system which is measure-theoretically
isomorphic to the action of the identity map on $\bs^1$ equipped
with the Lebesgue measure. What kind of other dynamical systems
can be obtained as Furstenberg systems for $\bfu$ in the MRT class
is a natural question. Furthermore, in the topological setting,
$\bfu$ determines a subshift $X_{\bfu}\subset(\bs^1)^{\N}$ and we
can ask for its topological entropy. Our aim is to prove the
following result.

\vspace{2ex}

 \noindent
{\bf Main Theorem.} {\em Let $\bfu$ be in the MRT class. Then, for each $d\geq 0$, there is a Furstenberg system $(X_{\bfu},\nu_d,S)$ of $\bfu$ which is measure-theoretically isomorphic to the unipotent system
$$
(x_d,x_{d-1},\ldots,x_0)\mapsto (x_d,x_{d-1}+x_d,\ldots, x_0+x_{1})$$
on $\T^{d+1}$ equipped with the $(d+1)$-dimensional Lebesgue measure.

Furthermore,  the Bernoulli shift $\left((\bs^1)^{\N},(\Leb_{\bs^1})^{\otimes\N},S\right)$ is also a Furstenberg system of $\bfu$,  i.e.\
\beq\label{char2}
\mbox{the analog of the Chowla conjecture holds for $\bfu$ along a subsequence.}\eeq In particular, \beq\label{char3}
X_{\bfu}=(\bs^1)^{\N},~\footnote{Cf.\ this equality with a remark after Theorem~10 in~\cite{Sa} as a topological instance of validity of Chowla conjecture.}\eeq
and
\beq\label{char1}
h_{\rm top}(X_{\bfu},S)=\infty.\eeq}

\vspace{2ex}

The proof of the above theorem is given in Section~\ref{s:proof}.
It seems also that this result makes it legitimate to ask whether (some of) properties \eqref{char2}-\eqref{char1} are valid for all totally aperiodic multiplicative functions $\bfu$, $|\bfu|=1$. Note also that the theorem above sheds light on Frantzikinakis' question: see  Problem~3.1 on the list of problems in \cite{Workshop} of whether the system $(x,y)\mapsto (x,x+y)$ on $\T^2$ can be a Furstenberg system of the Liouville function. The expected answer is of course negative (as the positive answer is in conflict with the Chowla conjecture) but, as our result shows, this unipotent system is a Furstenberg system for a class of multiplicative aperiodic functions. Moreover, our Main Theorem yields positive answers to questions raised in Problem~7.3 \cite{Workshop} in the class of MRT functions. More than that, since the ergodic components of $\nu_d$, $d\geq2$, are affine extensions of irrational rotations, the Main Theorem disproves Conjecture~2 from \cite{Fr-Ho3}. 

As each $\bfu\in {\rm MRT}$ is given ``locally by Archimedean
characters'', in Section~\ref{s:Archimedes}, we will deal with
Furstenberg systems of Archimedean characters themselves and will
describe their Furstenberg systems.  Moreover, we will show that no
$\bfu\in\MRT$ satisfies the analog of Sarnak's conjecture. Finally,
in Section~\ref{s:rozne} we show that the analog of
logarithmic Chowla conjecture for $\bfu$ (cf.~\eqref{char2}) holds along a subsequence and discuss further properties of Furstenberg systems of MRT arithmetic functions: strong stationarity and the absence of zero mean on typical short interval.

In Appendix we discuss Furstenberg systems given by $\nu_d$ from the pure  ergodic theory point of view.
We explain their connections with the classical theory of
transformations with quasi-discrete spectrum and give a new characterization of quasi-eigenfunctions which seems to be of independent interest.

\section{Furstenberg systems of a bounded arithmetic function}\label{s:Furstenberg}
Assume that $u:\N\to\C$ is an arithmetic function, $|u|\leq 1$.
Denote by $\mathbb{D}$ the unit disk.
On the space $M\left(\mathbb{D}^{\N}\right)$ of Borel probability measures on $\mathbb{D}^{\N}$, we consider the weak$^\ast$-topology, in which $\nu_m\to\nu$ if and only if $\int_{\mathbb{D}^{\N}}f\,d\nu_m\to \int_{\mathbb{D}^{\N}} f\,d\nu$ for each $f\in C(\mathbb{D}^{\N})$. This topology turns $M\left(\mathbb{D}^{\N}\right)$ into a compact metrizable space.
Let $S$ denote the shift map on $\mathbb{D}^{\N}$, and consider in $M\left(\mathbb{D}^{\N}\right)$ the sequence
\[
\left(E_N(u)\right)_{N\geq 1} := \left(\frac1N\sum_{0\le n< N}\delta_{S^nu}\right)_{N\geq 1}
\]
of empiric probability measures. By compactness, we can choose a converging subsequence
\[
E_{N_m}(u)\to\nu,
\]
and we say that $u$ is \emph{quasi-generic for $\nu$ along the sequence $(N_m)$}.
It is not hard to see (\textit{cf}.\ the Krylov-Bogolyubov theorem) that such a limit point $\nu$ is $S$-invariant. Moreover,
such a $\nu$ is always supported on the subshift $X_u$ generated by $u$,
that is
\[
 X_u := \overline{\{S^n u:n\ge 0\}}.
\]

The measure-theoretic dynamical system $(X_u,\nu,S)$ is called a {\em Furstenberg system} of $u$. Denote by $V(u)$ the set of all probability measures on $X_u$ for which $u$ is quasi-generic:
$$V(u):=\left\{\nu\in M(X_u,S):\:\nu=\lim_{m\to\infty}E_{N_m}(u)
\text{ for some }N_m\to\infty\right\}.$$
Classically, we have the following:
\begin{Prop}\label{p:wlasnosci} (\cite{Denker})
(i) $V(u)\subset M(X_u,S)$ is closed (in the weak$^\ast$-topology).\\
(ii) $V(u)$ is connected, whence
either $|V(u)|=1$ or $V(u)$ is uncountable.
\end{Prop}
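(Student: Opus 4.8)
The plan is to deduce both statements from the elementary observation that the empiric measures $E_N(u)$ and $E_{N+1}(u)$ are close to each other when $N$ is large, together with basic compactness of $M(X_u,S)$ in the weak$^\ast$-topology. More precisely, I would first fix a metric $D$ on the compact metrizable space $M(\mathbb{D}^{\N})$ compatible with the weak$^\ast$-topology, say one of the form $D(\nu,\nu')=\sum_{j\geq 1}2^{-j}\bigl|\int f_j\,d\nu-\int f_j\,d\nu'\bigr|$ for a fixed sequence $(f_j)$ dense in the unit ball of $C(\mathbb{D}^{\N})$ with $\|f_j\|_\infty\le 1$. Since
\[
E_{N+1}(u)-E_N(u)=\frac{1}{N+1}\,\delta_{S^{N}u}-\frac{1}{N(N+1)}\sum_{0\le n<N}\delta_{S^n u},
\]
each $f_j$ satisfies $\bigl|\int f_j\,d E_{N+1}(u)-\int f_j\,d E_N(u)\bigr|\le \tfrac{2}{N+1}$, and hence $D\bigl(E_N(u),E_{N+1}(u)\bigr)\le \tfrac{2}{N+1}\to 0$. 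This is the only ``computation'' needed, and it is where the structure of the sequence of Cesàro averages enters.

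For part (i), I would argue that $V(u)$ is the set of limit points of the sequence $\bigl(E_N(u)\bigr)_N$ in the compact metric space $M(X_u,S)$: indeed, a measure is in $V(u)$ iff it is the limit of $E_{N_m}(u)$ along some subsequence, which is exactly the definition of a limit point of the sequence. The set of limit points of any sequence in a metric space is closed (it is $\bigcap_{m}\overline{\{E_N(u):N\ge m\}}$), and since $M(X_u,S)$ is itself closed in $M(\mathbb{D}^{\N})$ (weak$^\ast$-limits of $S$-invariant measures supported on $X_u$ are again such), we get that $V(u)$ is closed. For part (ii), suppose for contradiction that $V(u)$ is disconnected, say $V(u)=A\sqcup B$ with $A,B$ nonempty, disjoint, and closed in $V(u)$; being closed in the closed set $V(u)$, they are compact, hence at positive distance $\eps:=D(A,B)>0$. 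Because $V(u)$ is the limit-point set of $\bigl(E_N(u)\bigr)$, infinitely many terms of the sequence lie within $\eps/3$ of $A$ and infinitely many lie within $\eps/3$ of $B$; together with $D\bigl(E_N(u),E_{N+1}(u)\bigr)\to 0$, a standard discrete intermediate-value argument produces a subsequence $E_{N_m}(u)$ staying in the ``middle'' region $\{\nu:D(\nu,A)\ge \eps/3,\ D(\nu,B)\ge\eps/3\}$, which is compact; extracting a further convergent subsequence yields a point of $V(u)$ outside $A\cup B$, a contradiction. Finally, a nonempty connected metric space is either a single point or uncountable, giving the dichotomy $|V(u)|=1$ or $V(u)$ uncountable.

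The main (and only real) obstacle is part (ii): one must turn the heuristic ``the empiric measures move in small steps, so their set of limit points cannot be split into two separated pieces'' into a clean argument. The cleanest route is the one sketched above — assume a separation into two compact pieces at distance $\eps$, use the small-increment estimate to show the sequence must visit the region strictly between them infinitely often, and extract a limit point there. One could alternatively invoke the general topological fact that the set of limit points (the $\omega$-limit set) of a sequence $(x_N)$ in a compact metric space with $d(x_N,x_{N+1})\to 0$ is connected; I would state and prove exactly this lemma and then apply it, citing \cite{Denker} for the original formulation. Everything else — invariance of limit points under $S$, closedness, the connected/one-point-or-uncountable dichotomy — is standard point-set topology and measure theory and needs no elaboration.
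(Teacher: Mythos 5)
Your proposal is correct, and it is essentially the classical argument: the paper itself gives no proof, citing \cite{Denker}, where exactly this reasoning appears — the small-increment estimate $D\bigl(E_N(u),E_{N+1}(u)\bigr)\le 2/(N+1)$ combined with the fact that the set of limit points of a sequence in a compact metric space whose consecutive terms approach each other is closed and connected, plus the standard dichotomy that a connected metric space is a point or uncountable. The only detail to keep explicit is that membership $V(u)\subset M(X_u,S)$ (invariance and support on $X_u$) comes from the Krylov--Bogolyubov-type remark already made in the paper, which you correctly treat as standard.
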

Let $Z_0:\mathbb{D}^{\N}\to\mathbb{D}$ be the $1$-coordinate map: $Z_0(y)=y(1)$ for $y=\bigl(y(n)\bigr)_{n\in\N}\in \mathbb{D}^{\N}$. In general, we define $Z_n:=Z_0\circ S^{n}$ as the map $y\mapsto y(n+1)$.
Then, given $\nu\in V(u)$, we obtain a stationary process
$(Z_0,Z_1,\ldots)$  with values in $\mathbb{D}^{\N}$ whose distribution is $\nu$.
Let $\kappa=(Z_0)_\ast(\nu)$ be the distribution of the random variable $Z_0$ under $\nu$.
Then by the $S$-invariance of $\nu$, $\kappa$ is also the distribution of each coordinate $Z_n$, $n\ge0$. An example of particular interest
corresponds to the i.i.d.\ case, arising when $\nu$ is a product measure, {\em i.e.}, when $\nu$ is of the form $\kappa^{\otimes \N}$.  We have the following observation.

\begin{Prop}
\label{c:rou10}
Let $\kappa$ be a probability measure on $\mathbb{D}$.
Assume that for each $d\geq 0$ there exists $\nu_d\in V(u)$
under which the distribution of $(Z_0,\ldots,Z_{d})$ is $\kappa^{\otimes d+1}$.
Then $\kappa^{\ot\N}\in V(u)$.
\end{Prop}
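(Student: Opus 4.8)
The plan is to realize $\kappa^{\otimes\N}$ as a weak$^\ast$ limit of empiric measures by a diagonal argument that stitches together, for larger and larger blocks of coordinates, the hypotheses provided for each fixed $d$. The key point is that weak$^\ast$ convergence on $\mathbb{D}^{\N}$ is tested against continuous functions, and every $f\in C(\mathbb{D}^{\N})$ can be uniformly approximated by functions depending on only finitely many coordinates; hence, to prove $\kappa^{\otimes\N}\in V(u)$, it suffices to find a single sequence $(N_m)$ along which, for every fixed $d$, the distribution of $(Z_0,\ldots,Z_d)$ under $E_{N_m}(u)$ converges to $\kappa^{\otimes d+1}$.

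First I would fix a countable dense family of test functions and a metric $D$ inducing the weak$^\ast$-topology on $M(\mathbb{D}^{\N})$. For each $d\geq 0$, the hypothesis gives $\nu_d\in V(u)$ with $(Z_0,\ldots,Z_d)_\ast\nu_d=\kappa^{\otimes d+1}$; by definition of $V(u)$ there is a sequence $N^{(d)}_k\to\infty$ with $E_{N^{(d)}_k}(u)\to\nu_d$. Then I would choose, for each $d$, an index $N_d:=N^{(d)}_{k(d)}$ so large that $E_{N_d}(u)$ is within $1/d$ (in the metric $D$) of $\nu_d$. The resulting sequence $(N_d)_{d\geq 1}$ tends to infinity, so by compactness of $M(\mathbb{D}^{\N})$ we may pass to a further subsequence along which $E_{N_d}(u)$ converges to some $S$-invariant measure $\nu\in V(u)$; it only remains to identify $\nu$ with $\kappa^{\otimes\N}$.

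To do that, fix $d\geq 0$ and a bounded continuous $g:\mathbb{D}^{d+1}\to\C$. For all large $d'\geq d$ along our subsequence, $E_{N_{d'}}(u)$ is within $1/d'$ of $\nu_{d'}$, and since $g\circ(Z_0,\ldots,Z_d)$ is a fixed continuous function on $\mathbb{D}^{\N}$, we get
$$\int g\,d\bigl((Z_0,\ldots,Z_d)_\ast E_{N_{d'}}(u)\bigr)\longrightarrow \int g\,d\bigl((Z_0,\ldots,Z_d)_\ast\nu_{d'}\bigr)=\int g\,d\kappa^{\otimes d+1},$$
the last equality because $(Z_0,\ldots,Z_d)_\ast\nu_{d'}$ is the $(d{+}1)$-dimensional marginal of $\kappa^{\otimes d'+1}$, namely $\kappa^{\otimes d+1}$, for every $d'\geq d$. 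On the other hand the left-hand side converges to $\int g\,d\bigl((Z_0,\ldots,Z_d)_\ast\nu\bigr)$ by weak$^\ast$ convergence. Hence $(Z_0,\ldots,Z_d)_\ast\nu=\kappa^{\otimes d+1}$ for every $d$, and since finite-dimensional marginals determine a measure on $\mathbb{D}^{\N}$, we conclude $\nu=\kappa^{\otimes\N}$, so $\kappa^{\otimes\N}\in V(u)$.

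The only mildly delicate point — the ``main obstacle'' — is the bookkeeping in the diagonal extraction: one must be sure that forcing $E_{N_d}(u)$ close to $\nu_d$ for the single index $d$ is enough, rather than needing simultaneous closeness of all finite marginals. This is handled precisely by the observation above: closeness in the weak$^\ast$ metric $D$ already controls every fixed continuous test function, and we let $d$ itself tend to infinity, so that for each fixed $d$ the tail of the sequence $(N_{d'})_{d'\geq d}$ does the work. No quantitative rate is needed; the argument is purely a compactness-plus-diagonalization statement, exactly in the spirit of the Krylov--Bogolyubov reasoning already invoked in the text.
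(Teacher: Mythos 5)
Your proposal is correct and follows essentially the same route as the paper: pass to a limit of the measures $\nu_d$ (in your case, of empirical measures chosen close to the $\nu_d$), observe that the limit lies in $V(u)$, and identify it with $\kappa^{\otimes\N}$ by testing against continuous functions of finitely many coordinates, using that the $(Z_0,\ldots,Z_d)$-marginal of $\nu_{d'}$ is $\kappa^{\otimes(d+1)}$ whenever $d'\ge d$. The only differences are cosmetic: your diagonal extraction in effect re-proves the closedness of $V(u)$, which the paper simply cites as Proposition~\ref{p:wlasnosci}~(i), and you use general bounded continuous test functions where the paper uses monomials $Z_0^{q_1}\circ S^{j_1}\cdots Z_0^{q_k}\circ S^{j_k}$.
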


\begin{proof} By compactness, we can assume that $\nu_d\to\rho$, and by Proposition~\ref{p:wlasnosci}~(i), $\rho\in V(u)$.
All we need to show is that
$$
\mathbb{E}_{\rho} (Z_0^{q_1}\circ S^{j_1}\cdot \ldots\cdot Z_0^{q_k}\circ S^{j_k})=\prod_{i=1}^k \mathbb{E}_\kappa Z_0^{q_i}$$
for each $k\geq1$, $q_i\in\Z$ and $0\le j_1<\ldots <j_k$. But the equality above is true if $\rho$ is replaced by $\nu_d$ for $d\geq j_k$, and since
$Z_0^{q_1}\circ S^{j_1}\cdot \ldots\cdot Z_0^{q_k}\circ S^{j_k}\in C(X_u)$,  the result follows.
\end{proof}

%

\begin{Remark}
\label{r:entropy}
If $\kappa^{\ot\N}\in V(u)$ then by the variational principle (see e.g.\ \cite[Section 8.2]{walters})  $h_{\rm top}(X_u,S)\geq h(X_u,\kappa^{\ot\N},S)=H(\kappa)$. If the distribution of $\kappa$ is continuous then immediately $H(\kappa)=+\infty$, whence
$h_{\rm top}(X_u,S)=+\infty$ in this case.
\end{Remark}

Proposition~\ref{c:rou10} can be useful if we want to show that the product measure yields a Furstenberg system of Bernoulli type (``Chowla holds along a subsequence''). Indeed, we only need to show the existence of Furstenberg systems which yield some finite degree of independence of the process $(Z_n)_{n\ge0}$ and such can be firstly of zero entropy and even very non-ergodic (i.e.\ belonging to ${\rm Erg}^\perp$), cf.\ also \cite{Fl}. Together with Remark~\ref{r:entropy}, it also gives a nice criterion to show that the topological entropy of $u$ is infinite.

\section{MRT multiplicative functions}
\label{sec:MRTclass}

In this section we describe more precisely the MRT class of completely multiplicative functions to which our Main Theorem applies.
We start by giving a formal definition of this class, then we resume the construction given in~\cite{Ma-Ra-Ta} by
Matom\"aki, Radziwi\l\l \ and Tao of a completely multiplicative function meeting the required property. Finally, we give the key property of MRT arithmetic functions that will be used in the proof of our Main Theorem.

\begin{Def}
 \label{def:MRT}
 A completely multiplicative function $\bfu:\N\to\bs^1$ belongs to the MRT class if there exist
 two increasing sequences of integers $(t_m)$ and $(s_m)$ such that, for each $m\ge1$, we have
 the following properties:
 \begin{align}
  &\bullet\ t_m<s_{m+1}<s_{m+1}^2\le t_{m+1}, \nonumber\\
  &\bullet\ \text{for each prime }p\in (t_m,t_{m+1}],\ \bfu(p)=p^{is_{m+1}}, \label{eq:MRT1}\\
  &\bullet\ \text{for each prime }p\le t_m,\ \left|\bfu(p)-p^{is_{m+1}}\right|<\frac{1}{t_m^2}. \label{eq:MRT2}
 \end{align}
\end{Def}

Here is the method to get such a function.
We just have to define $\bfu(p)$ for each prime $p$ and to construct the sequences $(t_m)$ and $(s_m)$,
which is done inductively as follows.
Start by choosing an integer $t_1\in\N$ and set, for each prime $p\le t_1$, $\bfu(p):=1$.
Now, assume that for some $m\ge1$ we have already defined $t_m$ and $\bfu(p)$ for each $p\le t_m$.  In the Cartesian product $\prod_{p\le t_m}\bs^1$, we consider the sequence of points
\[
 \Bigl(\left(p^{i s}\right)_{p\le t_m}\Bigr)_{s\in\N}.
\]
Since the numbers $\log p$, $p\le t_m$, are linearly independent over the integers, this sequence is dense in $\prod_{p\le t_m}\bs^1$. Thus, we can choose 
$s_{m+1}>t_m$ so that~\eqref{eq:MRT2} is satisfied.
We then choose $t_{m+1}\ge s_{m+1}^2$, and for $t_m<p\le t_{m+1}$ we set $\bfu(p):=p^{i s_{m+1}}$.
Doing this inductively for each $m\ge1$ gives a completely multiplicative function $\bfu\in \MRT$.

\begin{Remark} It is also interesting to note that the growth
of $s_{m+1}/t_m$ is necessarily superpolynomial: in fact, it follows from
Propositions~\ref{prop:key} and~\ref{prop:final} that, for each $\beta>0$,   $t_m<s_{m+1}^\beta$
for $m$ large enough.

It is also shown in \cite{Ma-Ra-Ta} that once $s_{m+1}>e^{t_m}$ for $m\geq1$,  the resulting $\bfu$ is aperiodic.
\end{Remark}

\medskip

We will use the following easy lemma.

\begin{Lemma}
 \label{lemma:easy}
 Let $\bfu\in\MRT$ and $(t_m)$, $(s_m)$ be as in Definition~\ref{def:MRT}. Let $m\ge1$ and $n\le t_{m+1}$.
If the number of prime factors of $n$ less than or equal to $t_m$ (counting multiplicity) is bounded by $t_m$,
then
\begin{equation}
 \label{eq:most_n}
 \left|\bfu(n)-n^{is_{m+1}} \right| \le \frac{1}{t_m}.
\end{equation}
\end{Lemma}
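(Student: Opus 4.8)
The plan is to reduce everything to the elementary estimate that, for complex numbers $z_1,\dots,z_r,w_1,\dots,w_r$ all of modulus one,
\[
\Bigl|\prod_{j=1}^r z_j-\prod_{j=1}^r w_j\Bigr|\le \sum_{j=1}^r|z_j-w_j|,
\]
which follows from the telescoping identity $\prod_j z_j-\prod_j w_j=\sum_j (z_1\cdots z_{j-1})(z_j-w_j)(w_{j+1}\cdots w_r)$ together with $|z_1\cdots z_{j-1}|=|w_{j+1}\cdots w_r|=1$.

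First I would write $n=q_1q_2\cdots q_r$ as a product of primes counted with multiplicity, so that $r=\Omega(n)$; by complete multiplicativity of $\bfu$ we then have $\bfu(n)=\prod_{j=1}^r\bfu(q_j)$, while trivially $n^{is_{m+1}}=\prod_{j=1}^r q_j^{is_{m+1}}$. Since $n\le t_{m+1}$, every prime $q_j$ satisfies $q_j\le t_{m+1}$, so each $q_j$ lies in exactly one of the two ranges $q_j\le t_m$ or $t_m<q_j\le t_{m+1}$ — this is the only place the hypothesis $n\le t_{m+1}$ is used.

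Next I would apply the telescoping estimate above with $z_j=\bfu(q_j)$ and $w_j=q_j^{is_{m+1}}$, both of modulus one. For the indices $j$ with $t_m<q_j\le t_{m+1}$, property~\eqref{eq:MRT1} gives $\bfu(q_j)=q_j^{is_{m+1}}$, so these terms vanish and contribute nothing to the sum. For the indices $j$ with $q_j\le t_m$, property~\eqref{eq:MRT2} gives $|\bfu(q_j)-q_j^{is_{m+1}}|<1/t_m^2$; and by hypothesis there are at most $t_m$ such indices (counted with multiplicity). Hence the sum is at most $t_m\cdot t_m^{-2}=1/t_m$, which is precisely~\eqref{eq:most_n}.

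There is essentially no obstacle: the proof is a one-line application of complete multiplicativity and the triangle inequality, the content being entirely in how the hypothesis ``at most $t_m$ small prime factors'' is calibrated so that $t_m$ errors of size $t_m^{-2}$ add up to the clean bound $1/t_m$. The only mild care needed is to remember that the primes exceeding $t_m$ that divide $n$ are automatically $\le t_{m+1}$, so that~\eqref{eq:MRT1} really does apply to all of them.
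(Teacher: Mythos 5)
Your proof is correct and is essentially the paper's own argument: the paper also factors $n$ into primes, applies \eqref{eq:MRT1} exactly to the primes in $(t_m,t_{m+1}]$, and uses \eqref{eq:MRT2} on the primes $\le t_m$ to get the bound $\sum_{p\le t_m}\alpha_p(n)/t_m^2\le 1/t_m$, with the telescoping inequality for unimodular products left implicit. You have merely made that standard telescoping step explicit, which is a fine (and arguably clearer) presentation of the same proof.
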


\begin{proof}
We write $n$ as a product of primes
\[ n = \prod_{p\le t_{m}} p^{\alpha_p(n)}\prod_{t_m<p\le t_{m+1}} p^{\alpha_p(n)}. \]
By the complete multiplicativity of $\bfu$ and by~\eqref{eq:MRT1}, we then have
\[ \bfu(n) = \prod_{p\le t_{m}} \bfu(p)^{\alpha_p(n)}  \left(\prod_{t_m<p\le t_{m+1}} p^{\alpha_p(n)}\right)^{is_{m+1}}. \]
Using~\eqref{eq:MRT2} in the first product above, we get
\[ \left|\bfu(n)-n^{is_{m+1}} \right| \le \frac{\sum_{p\le t_{m}} \alpha_p(n)}{t_m^2}. \]
In particular, if $\sum_{p\le t_{m}} \alpha_p(n)\le t_m$, then
\[ \left|\bfu(n)-n^{is_{m+1}} \right| \le \frac{1}{t_m}. \]
\end{proof}

The next lemma is useful to estimate the density of the integers $n$ for which~\eqref{eq:most_n} is not valid.

\begin{Lemma}
  \label{lemma:Bt}
  For $t\in\N$, denote
  \[
    B_t:=\{n\geq1:\:\sum_{p\leq t
    }\alpha_p(n)\geq t\},
  \]
  i.e.\ $B_t$ is the set of integers $n\ge1$ having at least $t$ prime factors less than or equal to $t$ (counting multiplicity).
  Then
  \begin{equation}
   \label{eq:Bt}
   \varepsilon_t := \sup_{N\ge1} \frac{1}{N} \sum_{1\le n\le N} \ind{B_t}(n) \tend{t}{\infty} 0.
  \end{equation}
 \end{Lemma}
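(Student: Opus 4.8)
The plan is to bound the density of $B_t$ by a first-moment (Markov-type) estimate on the number of prime factors $\le t$, and then let $t\to\infty$. First I would introduce the additive function $\omega_t(n):=\sum_{p\le t}\alpha_p(n)$, so that $B_t=\{n:\omega_t(n)\ge t\}$. Since $\omega_t(n)=\sum_{p\le t}\sum_{j\ge1}\ind{p^j\mid n}$, summing over $1\le n\le N$ gives
\[
\sum_{1\le n\le N}\omega_t(n)=\sum_{p\le t}\sum_{j\ge1}\Big\lfloor\frac{N}{p^j}\Big\rfloor\le N\sum_{p\le t}\sum_{j\ge1}\frac{1}{p^j}=N\sum_{p\le t}\frac{1}{p-1}.
\]
By Mertens' theorem, $\sum_{p\le t}\frac1{p-1}=\log\log t+O(1)$, so $\sum_{1\le n\le N}\omega_t(n)\le N\,(\log\log t+C)$ for an absolute constant $C$ and all $N,t$. (One can avoid even Mertens here and just use $\sum_{p\le t}\frac1{p-1}\le\sum_{k\le t}\frac1{k-1}=O(\log t)$, which is amply enough.)

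Next, by Markov's inequality applied to the nonnegative function $\omega_t$ on $\{1,\dots,N\}$,
\[
\frac1N\sum_{1\le n\le N}\ind{B_t}(n)=\frac1N\,\#\{n\le N:\omega_t(n)\ge t\}\le\frac1{t}\cdot\frac1N\sum_{1\le n\le N}\omega_t(n)\le\frac{\log\log t+C}{t}.
\]
The right-hand side does not depend on $N$, so taking the supremum over $N\ge1$ gives $\varepsilon_t\le(\log\log t+C)/t$, and hence $\varepsilon_t\to0$ as $t\to\infty$, which is~\eqref{eq:Bt}.

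There is essentially no obstacle here: the only mild point is making sure the bound on $\frac1N\sum_{n\le N}\omega_t(n)$ is uniform in $N$, which is automatic because $\lfloor N/p^j\rfloor\le N/p^j$, and that the constant in $\sum_{p\le t}\frac1{p-1}$ is absolute, which follows from the crude comparison with the harmonic sum and needs no prime number theorem input. If one wants the cleanest possible statement, note that it in fact yields the quantitative rate $\varepsilon_t=O\!\big((\log\log t)/t\big)$, far more than the qualitative convergence required in the lemma.
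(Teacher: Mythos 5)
Your proof is correct, and it takes a genuinely different route from the paper. The paper argues by pigeonhole: setting $k_t:=\lfloor t/\pi(t)\rfloor$, it observes that any $n\in B_t$ must be divisible by $p^{k_t}$ for some prime $p\le t$, and then a union bound gives $\varepsilon_t\le\sum_{p\le t}p^{-k_t}\le\sum_p p^{-k_t}\to0$, using that $k_t\to\infty$ (i.e.\ that $\pi(t)=o(t)$). You instead run a first-moment (Markov/Tur\'an-type) argument on $\omega_t(n)=\sum_{p\le t}\alpha_p(n)$, bounding $\frac1N\sum_{n\le N}\omega_t(n)\le\sum_{p\le t}\frac1{p-1}$ uniformly in $N$ and dividing by $t$. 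Your route needs no prime-counting input at all (the crude comparison $\sum_{p\le t}\frac1{p-1}=O(\log t)$ suffices, so Mertens is optional), and it yields an explicit rate $\varepsilon_t=O\bigl((\log\log t)/t\bigr)$ --- in fact slightly better than what the paper's union bound gives, since the latter is dominated by the $p=2$ term $2^{-k_t}\approx t^{-\log 2}$; of course no rate is needed for the lemma. The only cosmetic point is that your comparison sum $\sum_{k\le t}\frac1{k-1}$ should start at $k=2$, and one should note (as you do) that the bound $\lfloor N/p^j\rfloor\le N/p^j$ makes everything uniform in $N$. Both arguments are elementary and complete; yours is self-contained in a slightly stronger sense.
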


 \begin{proof}
  For each $t\in\N$, set $k_t:= \left\lfloor \frac{t}{\pi(t)} \right\rfloor$ (where $\pi(t)$ denotes, as usual, the number of primes up to $t$).
  Let $n\in\N$; if, for each prime $p\le t$, $p^{k_t}$ does not divide $n$, then the number of prime factors of $n$ less than or equal to $t$ (counting multiplicity) is at most $\pi(t)(k_t-1)<t$, hence $n\notin B_t$.
  By contraposition, if $n\in B_t$, there exists a prime number
  $p\le t$ such that $p^{k_t}|n$. Therefore, we have for each $N\ge1$
  \begin{align*}
   \frac{1}{N} \sum_{1\le n\le N} \ind{B_t}(n) &\le  \frac{1}{N} \sum_{1\le n\le N} \sum_{p\le t} \ind{p^{k_t}|n} \\
            & =   \sum_{p\le t} \frac{1}{N} \sum_{1\le n\le N} \ind{p^{k_t}|n} \\
            &  \leq   \sum_{p\le t} \frac{1}{p^{k_t}} \\
            & <  \sum_{p}  \frac{1}{p^{k_t}} \tend{t}{\infty} 0 \quad\text{(since $k_t\to \infty$ as $t\to\infty$).}
  \end{align*}
 \end{proof}
 Using Lemma~\ref{lemma:easy} and Lemma~\ref{lemma:Bt}, we obtain the following result.
\begin{Prop}
 \label{prop:MRT}
 Let $\bfu\in\MRT$ and $(t_m)$, $(s_m)$ be as in
 Definition~\ref{def:MRT}. Let
 $(N_m)$ be an increasing sequence of integers with
 $N_m\le t_{m+1}$ for each $m\in\N$.
 Then
 \[
    \frac{1}{N_m}\# \left\{ n\in\{1\ldots,N_m\} :  \left| \bfu(n)-n^{is_{m+1}} \right| > \frac{1}{t_m} \right\} \le  \varepsilon_{t_m}  \tend{m}{\infty} 0.
 \]

\end{Prop}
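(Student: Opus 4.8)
The plan is to obtain Proposition~\ref{prop:MRT} directly by feeding Lemma~\ref{lemma:easy} into Lemma~\ref{lemma:Bt}; no new idea is required beyond bookkeeping. The key observation is that, for each $m$, the ``bad'' set
\[
 \mathcal{B}_m := \left\{ n\in\{1,\ldots,N_m\}:\ \left|\bfu(n)-n^{is_{m+1}}\right|>\tfrac{1}{t_m}\right\}
\]
is contained in $B_{t_m}$, after which the claimed density bound is just the definition of $\varepsilon_{t_m}$.

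First I would establish the inclusion $\mathcal{B}_m\subseteq B_{t_m}$. Since $N_m\le t_{m+1}$, every $n\in\{1,\ldots,N_m\}$ satisfies $n\le t_{m+1}$, so Lemma~\ref{lemma:easy} applies to it. That lemma says that whenever $\sum_{p\le t_m}\alpha_p(n)\le t_m$ one has $\left|\bfu(n)-n^{is_{m+1}}\right|\le \tfrac1{t_m}$. Taking the contrapositive: if $n\in\mathcal{B}_m$, then necessarily $\sum_{p\le t_m}\alpha_p(n)> t_m$, and in particular $\sum_{p\le t_m}\alpha_p(n)\ge t_m$, which is exactly the membership condition for $B_{t_m}$. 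Hence $\mathcal{B}_m\subseteq B_{t_m}\cap\{1,\ldots,N_m\}$.

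It then remains to count, using the inclusion together with the definition of $\varepsilon_t$ in Lemma~\ref{lemma:Bt}:
\[
 \frac{1}{N_m}\#\mathcal{B}_m \;\le\; \frac{1}{N_m}\sum_{1\le n\le N_m}\ind{B_{t_m}}(n) \;\le\; \sup_{N\ge1}\frac1N\sum_{1\le n\le N}\ind{B_{t_m}}(n) \;=\; \varepsilon_{t_m}.
\]
Finally, since $(t_m)$ is an increasing sequence of integers we have $t_m\to\infty$, so $\varepsilon_{t_m}\to 0$ by~\eqref{eq:Bt}. There is no genuine obstacle here; the only points to keep straight are the direction of the inequality in Lemma~\ref{lemma:easy} (a large deviation of $\bfu(n)$ from $n^{is_{m+1}}$ forces many small prime factors of $n$, hence $n\in B_{t_m}$) and the fact that $\varepsilon_t$ was defined as a \emph{supremum} over all window lengths $N\ge1$, which is precisely what lets us apply the bound to the particular window $\{1,\ldots,N_m\}$ with $N_m$ varying.
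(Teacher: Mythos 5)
Your proof is correct and is exactly the argument the paper intends: the paper derives Proposition~\ref{prop:MRT} by combining Lemma~\ref{lemma:easy} (via the same contrapositive, giving the inclusion of the bad set into $B_{t_m}$) with the uniform density bound $\varepsilon_{t_m}$ from Lemma~\ref{lemma:Bt}. No differences worth noting.
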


\section{Proof of the Main Theorem} \label{s:proof}

Here is an outline of the proof. First, we present in Section~\ref{sec:processes} a family of stationary processes, taking values in the unit circle, parametrized by an integer $d\geq0$. Such a process generates a measure-theoretic dynamical system isomorphic to the unipotent system appearing in the statement of the theorem. It is easily characterized by two conditions: each coordinate of the process is uniformly distributed on the circle, and some deterministic function $\phi_{d+1}$ of the coordinates of the process is constantly equal to 1. We completely describe in Proposition~\ref{prop:nud} and Remark~\ref{rem:nud} the distribution $\nu_d$ of such a process, in particular we show that $d+1$ consecutive coordinates are independent. Then Proposition~\ref{prop:key} provides a criterion for an arithmetic function $u$ to be quasi-generic for this probability measure $\nu_d$. This criterion involves the functions $\phi_{d+1}$, and we study in Section~\ref{sec:polynomials} two sequences of polynomials related with this family of functions. We show in Section~\ref{s:dowodMRT} that for  $\bfu\in \MRT$, the criterion is fulfilled for each $d\geq0$. For this we use Proposition~\ref{prop:MRT} which allows us to replace, in the appropriate range, $\bfu(n)$ by
$n^{is_{m+1}}$. The criterion then becomes an evaluation of some exponential sums, that we can handle with the help of Kusmin-Landau Theorem (Theorem~\ref{t:KuLa}). Finally, using Proposition~\ref{c:rou10}, we can conclude that $\bfu$ is also quasi-generic for the product measure $\bigl(\Leb_{\mathbb{S}^1}\bigr)^{\otimes \mathbb{N}}$.

 \subsection{Processes in the unit circle}
 \label{sec:processes}

We recall the notation $Z_n=Z_0\circ S^n$ ($n\ge0$) from Section~\ref{s:Furstenberg}, but we restrict ourselves to the set of sequences taking values in $\bs^1$.
We define inductively a new sequence of maps $(\Yg{d})_{d\ge0}$ defined on $(\bs^1)^\N$
 taking also their values in $\bs^1$, by $\Yg{0}:=Z_0$, $\Yg{1}:=Z_1/Z_0$, and in general for each $d\ge0$,
 \[ \Yg{d+1} := \frac{\Yg{d}\circ S}{\Yg{d}}. \]
 We also define the auxilliary sequence $(X_d)_{d\ge0}$ taking values in the one-dimensional torus $\T:=\R/\Z$, by
 \[ e^{i2\pi X_d} := Y^{(d)}. \]
 As $e^{i2\pi X_{d+1}}=\Yg{d+1}=e^{i2\pi (X_d\circ S-X_d)}$, we get
 \begin{equation}
 \label{eq:XdoT}
  \forall d\ge0,\quad X_d\circ S = X_{d+1}+X_d.
 \end{equation}
 Moreover, by an easy induction on $n$ using the above formula, we can recover the process each $Z_n$ from $(X_d)$ by the relation
 \begin{equation}
  \label{eq:ZfromX}
  \forall n\ge0,\quad Z_n = e^{i2\pi\sum_{d=0}^n {{n}\choose{d}} X_d}.
 \end{equation}

\begin{Prop}
\label{prop:nud}
Let $\nu$ be a shift-invariant probability measure on $(\bs^1)^\N$.
 Assume that there exists $d\ge0$ such that, under $\nu$,
 \begin{itemize}
  \item $\Yg{d}$ is $S$-invariant (\textit{i.e.} $\Yg{d+1}=1$ $\nu$-a.s.),
  \item the distribution of $\Yg{d}$ is uniform on $\bs^1$.
 \end{itemize}
 Then, for each $n$, $Z_n$ is uniformly distributed on $\bs^1$, and $Z_0,Z_1,\ldots,Z_d$ are independent.
\end{Prop}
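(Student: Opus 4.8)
The plan is to exploit the explicit change of coordinates~\eqref{eq:ZfromX} expressing each $Z_n$ as $e^{i2\pi\sum_{d=0}^n \binom{n}{d}X_d}$, together with the hypotheses that $\Yg{d+1}=1$ $\nu$-a.s.\ (equivalently $X_{d+1}\in\Z$, i.e.\ $X_{d+1}=0$ in $\T$, $\nu$-a.s.) and that $X_d$ (equivalently $\Yg{d}$) is uniform on $\bs^1$. First I would observe that from $\Yg{d+1}=1$ a.s.\ and the relation $\Yg{d'+1}=\Yg{d'}\circ S/\Yg{d'}$, a descending argument is not what is needed; rather, the hypothesis concerns the \emph{fixed} index $d$, and the point is that $\Yg{d+1}=1$ forces, via~\eqref{eq:XdoT}, the identity $X_d\circ S = X_d$ $\nu$-a.s., so $X_d$ is an $S$-invariant random variable. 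Iterating $X_{d-1}\circ S = X_d + X_{d-1}$ gives $X_{d-1}\circ S^k = kX_d + X_{d-1}$, and more generally, by induction on $j$, that $X_{d-j}\circ S^k$ is a polynomial of degree $j$ in $k$ with coefficients that are fixed linear combinations (binomial coefficients) of $X_d, X_{d-1},\ldots, X_{d-j}$; this is essentially~\eqref{eq:ZfromX} read along the orbit. So $(Z_0,\ldots,Z_d)$ is a deterministic function of the single random vector $(X_0,X_1,\ldots,X_d)$, and conversely $(X_0,\ldots,X_d)$ is a deterministic function of $(Z_0,\ldots,Z_d)$ since the change of variables~\eqref{eq:ZfromX} is unipotent (triangular with $1$'s on the diagonal) hence invertible over $\T$.

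The core of the argument is then to show that $(X_0,X_1,\ldots,X_d)$ is uniformly distributed on $\T^{d+1}$; the proposition follows immediately, because uniformity of a vector on $\T^{d+1}$ is equivalent to independence of its coordinates each uniform on $\T$, and applying the invertible unipotent map~\eqref{eq:ZfromX} (which sends Haar measure on $\T^{d+1}$ to Haar measure on $(\bs^1)^{d+1}$) transports this to: $(Z_0,\ldots,Z_d)$ uniform on $(\bs^1)^{d+1}$, i.e.\ $Z_0,\ldots,Z_d$ independent and each uniform. To prove uniformity of $(X_0,\ldots,X_d)$ I would compute Fourier coefficients: fix $(k_0,\ldots,k_d)\in\Z^{d+1}$ and evaluate $\mathbb{E}_\nu\bigl[e^{i2\pi\sum_{j=0}^d k_j X_j}\bigr]$. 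The key input is $S$-invariance of $\nu$: replacing $X_j$ by $X_j\circ S$ does not change the expectation. Using $X_j\circ S = X_j + X_{j+1}$ for $j<d$ and $X_d\circ S = X_d$, one finds $\sum_j k_j (X_j\circ S) = \sum_j k'_j X_j$ where $k'_d = k_d + k_{d-1}$ and $k'_j = k_j + k_{j-1}$ for $1\le j\le d-1$, $k'_0=k_0$. Iterating $S^n$ and invariance, $\mathbb{E}_\nu[e^{i2\pi\langle k, X\rangle}] = \mathbb{E}_\nu[e^{i2\pi\langle A^n k, X\rangle}]$ where $A$ is the transpose unipotent matrix; if $k\neq 0$ then, since $A$ is unipotent with no nontrivial fixed covector other than multiples of $e_d^*$... — more cleanly: if $k_0\neq 0$, the coefficient of $X_0$ stays $k_0$ but we can instead use that, writing the character in terms of $\Yg{d}$, $\Yg{d-1}$, \ldots, the character $e^{i2\pi\langle k,X\rangle}$ equals a monomial $\prod_j (\Yg{j})^{m_j}$; the hypothesis $\Yg{d}$ uniform kills any term with $m_d\neq 0$ after using $S$-invariance to push all the weight onto $\Yg{d}$ via the orbit polynomial identity, and $\Yg{d+1}=1$ handles the rest.

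The main obstacle — and the step to carry out carefully — is precisely this bookkeeping: showing that a nonzero character $(k_0,\ldots,k_d)$ can, after finitely many applications of the substitution $k\mapsto A k$ coming from $S$-invariance, be brought to a form in which it is a nonzero power of $\Yg{d}$ times a function that is $S$-invariant, so that uniformity of $\Yg{d}$ forces the Fourier coefficient to vanish. Concretely: let $j^\ast = \min\{j : k_j\neq 0\}$. One checks that under $k\mapsto A^n k$ the bottom nonzero entry stays at position $j^\ast$ with value $k_{j^\ast}$ (unipotence), while the top entry in position $d$ grows like a polynomial in $n$ of degree $d-j^\ast$ unless $j^\ast=d$. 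Then, reading $e^{i2\pi\langle A^n k, X\rangle}$ back through~\eqref{eq:ZfromX} as a monomial in the $Z$'s — or directly observing that $\mathbb{E}_\nu[\,\cdot\,]$ is independent of $n$ while the exponent of $X_d$ is nonconstant in $n$ — yields, after averaging over $n$ or by a direct rotation argument using that $X_d$ is uniform and $S$-invariant, that $\mathbb{E}_\nu[e^{i2\pi\langle k,X\rangle}]=0$ for all $k\neq 0$. Hence $(X_0,\ldots,X_d)$ is Haar on $\T^{d+1}$, and the proposition follows. I would expect the cleanest write-up to avoid the matrix formalism and instead argue: the $\sigma$-algebra generated by $(Z_0,\ldots,Z_d)$ equals that generated by $(X_0,\ldots,X_d)=(X_d, X_{d-1}, \ldots, X_0)$; since $X_d$ is $S$-invariant and uniform, and each $X_{d-j}$ satisfies $X_{d-j}\circ S^k - X_{d-j}$ depends only on $X_d,\ldots,X_{d-j+1}$, one disintegrates over $X_d$ and induces on $d$, using at the base case that $X_0$ with $X_1=0$ means the process is the constant rotation orbit $Z_n = e^{i2\pi(Z_0 + n\cdot 0)}\cdot(\ldots)$ — but this base reasoning must be phrased so that "uniform" is genuinely used, which is exactly where the Fourier computation is unavoidable.
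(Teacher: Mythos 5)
Your overall architecture is viable and genuinely different from the paper's: you reduce the statement to showing that $(X_0,\ldots,X_d)$ is Haar-distributed on $\T^{d+1}$, compute Fourier coefficients, and use $S$-invariance of $\nu$ (together with $X_{d+1}=0$ a.s.) to identify the coefficient at $k$ with the coefficient at $A^nk$, where $A$ is the unipotent substitution coming from $X_j\circ S=X_{j+1}+X_j$, $X_d\circ S=X_d$. Your bookkeeping is correct (the lowest nonzero entry stays equal to $k_{j^\ast}$, the entry at position $d$ is a polynomial in $n$ of degree $d-j^\ast$), and so is the final transfer to $(Z_0,\ldots,Z_d)$ through the triangular change of variables~\eqref{eq:ZfromX}. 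The paper argues differently: it conditions on the $S$-invariant variable $X_d$, notes that on almost every ergodic component $X_d$ is an irrational constant $\alpha$ (here uniformity of $\Yg{d}$ is used), and invokes Furstenberg's unique ergodicity of the affine map $(x_{d-1},\ldots,x_0)\mapsto(x_{d-1}+\alpha,x_{d-2}+x_{d-1},\ldots,x_0+x_1)$ to conclude that each such component gives $(\Leb_{\T})^{\otimes d}$, then integrates over components. Your closing suggestion (disintegrate over $X_d$ and induct) is in this spirit, but, as you yourself note, you do not make "uniform" do any work there; the paper makes it work via unique ergodicity, not Fourier analysis.

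The genuine gap is at the decisive step: you never actually prove that $\mathbb{E}_\nu\bigl[e^{i2\pi\langle k,X\rangle}\bigr]=0$ when $j^\ast<d$. Your claim that finitely many substitutions $k\mapsto Ak$ bring the character to "a nonzero power of $\Yg{d}$ times an $S$-invariant function" is false: the entry at position $j^\ast$ remains $k_{j^\ast}\neq0$ forever, so uniformity of $\Yg{d}$ never applies directly; and no "direct rotation argument" is available, since $\nu$ is not assumed invariant under rotating the $X_d$-coordinate alone. Likewise, "the expectation is independent of $n$ while the exponent of $X_d$ is nonconstant in $n$" is not a contradiction by itself — the constant could a priori be any value, and you must show it is $0$. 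The averaging route can be completed, but it needs an input you never invoke: pointwise in $x$, $\langle A^nk,X(x)\rangle$ is (a lift of) a real polynomial in $n$ of degree $d-j^\ast\ge1$ whose leading coefficient is $\frac{k_{j^\ast}}{(d-j^\ast)!}$ times a lift of $X_d(x)$, hence irrational for $\nu$-a.e.\ $x$ because $X_d$ is uniform; Weyl's equidistribution theorem for polynomials then gives $\frac1N\sum_{n\le N}e^{i2\pi\langle A^nk,X(x)\rangle}\to0$ a.e., and dominated convergence forces the (constant) Fourier coefficient to vanish. (Alternatively, one can argue by downward induction on $j^\ast$ using Bessel's inequality: once $(X_{j^\ast+1},\ldots,X_d)$ is known to be Haar, the corresponding characters are orthonormal in $L^2(\nu)$ and the constant coefficients along the pairwise distinct frequencies $A^nk$ must tend to, hence equal, $0$.) With one of these ingredients stated and used, your proof closes; without it, the key vanishing is only asserted.
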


 \begin{proof}
  As $Y^{(d)}$ is $S$-invariant, so is $X_d$. Hence, in  every ergodic component of $\nu$, $X_d$ is a.s.\ constant. Moreover, since the distribution of $X_d$ under $\nu$ is the Lebesgue measure $\Leb_{\T}$ on $\T$, in almost every ergodic component the value of $X_d$ is irrational. Let us fix such an ergodic component, say $\tilde \nu$, and let $\alpha$ be the value taken by $X_d$ under $\tilde \nu$.
  From~\eqref{eq:XdoT}, we get
  \begin{align*}
   \left( X_{d-1},\ldots,X_0 \right) \circ S &= \left( X_{d-1}+X_d,X_{d-2}+X_{d-1},\ldots,X_0 + X_1\right)\\
   &= \left( X_{d-1}+\alpha,X_{d-2}+X_{d-1},\ldots,X_0 + X_1\right).
  \end{align*}
  But when $\alpha$ is irrational, the map
  \[
     \left( x_{d-1},\ldots,x_0 \right) \in\T^d \mapsto \left( x_{d-1}+\alpha,x_{d-2}+x_{d-1},\ldots,x_0 + x_1\right)
  \]
  is uniquely ergodic \cite{Fu}, with the $d$-fold product $(\Leb_{\T})^{\otimes d}$ as the only ergodic invariant measure.
  We deduce that, under $\tilde \nu$, the distribution of $\left( X_{d-1},\ldots,X_0 \right)$ must be $(\Leb_{\T})^{\otimes d}$. Integrating over the set of ergodic components, we get that under $\nu$, the distribution of $\left( X_d, X_{d-1},\ldots,X_0 \right)$ is $(\Leb_{\T})^{\otimes (d+1)}$. Then, from Formula~\eqref{eq:ZfromX}, we can write each $0\le n\le d$,
  \[ Z_n=\tilde Z_{n-1} e^{i2\pi X_n},\]
  where $\tilde Z_{n-1}$ is measurable with respect to $X_0,\ldots,X_{d-1}$ ($\tilde Z_{n-1}=1$ for $n=0$). From this it follows that $Z_n$ is uniformly distributed on $\bs^1$ conditionally to $X_0,\ldots,X_{n-1}$. This concludes the proof.
 \end{proof}

\begin{Remark}
\label{rem:nud}
 The proof shows in fact that, for each $d\ge0$, there is a unique shift-invariant measure $\nu_d$ on $(\bs^1)^\N$ such that whenever the assumptions of Proposition~\ref{prop:nud} are satisfied, then the distribution of the stationary process $(Z_n)_{n\in\N}$ is $\nu_d$. Under $\nu_d$, any $d+1$ consecutive coordinates of the process are independent and uniformly distributed on the circle, and for each $n\ge0$ the coordinate $Z_{n+d+1}$ is a deterministic function of $Z_n,\ldots,Z_{n+d}$ given by the condition $\Yg{d+1}=1$. Moreover, looking at the process $\bigl((X_0,X_1,\ldots,X_d)\circ S^n\bigr)_{n\ge0}$, we see that the dynamical system $\bigl((\bs^1)^\N,\nu_d,S\bigr)$ is measure-theoretically isomorphic to the unipotent system described in the statement of the Main Theorem.
\end{Remark}

\medskip

It will be useful to introduce, for each $d\ge0$, the function $\ph{d}:(\bs^1)^{d+1}\to \bs^1$ such that $\Yg{d}=\ph{d}(Z_0,\ldots,Z_d)$. For $d=0,\ldots,4$, these functions are given by
\begin{equation}
\label{eq:phid}
 \begin{split}
  \ph{0}(z_0) = \frac{z_0}{1}\quad ;\quad\ph{1}(z_0,z_1) = \frac{z_1}{z_0}\quad ;\quad \ph{2}(z_0,z_1,z_2) = \frac{z_0z_2}{z_1^2}\quad ;\\
  \ph{3}(z_0,z_1,z_2,z_3) = \frac{z_1^3 z_3}{z_0 z_2^3}\quad ;\quad \ph{4}(z_0,z_1,z_2,z_3,z_4) = \frac{z_0 z_2^6 z_4}{z_1^4 z_3^4}.
 \end{split}
\end{equation}

 In general, we can always write the function $\ph{d}$ as a quotient
 \beq\label{e:3a} \ph{d}(z_0,\ldots,z_d) = \frac{\pi_d(z_0,\ldots,z_d)}{\tilde\pi_d(z_0,\ldots,z_d)} ,\eeq
 where $\pi_d(z_0,\ldots,z_d)$ and $\tilde\pi_d(z_0,\ldots,z_d)$ are products of some $z_j$'s (with possible multiplicity). These sequences of products are completely defined by the following recurrence property: $\pi_0(z_0):=z_0$, $\tilde\pi_0(z_0):=1$, and for each $d\ge1$,
 \begin{equation}
  \label{eq:defpi1}
 \pi_{d+1}(z_0,\ldots,z_{d+1}):= \pi_d(z_1,\ldots,z_{d+1})\,\tilde\pi_d(z_0,\ldots,z_d),
 \end{equation}
 and
 \begin{equation}
  \label{eq:defpi2}
 \tilde\pi_{d+1}(z_0,\ldots,z_{d+1}):= \tilde\pi_d(z_1,\ldots,z_{d+1})\,\pi_d(z_0,\ldots,z_d).
 \end{equation}

 Note that, as $\pi_d$ and $\tilde\pi_d$ only involve a product of variables, their definition can be extended to $\CC^{d+1}$.

 \medskip

 The next proposition is a key ingredient for the
 identification of some Furstenberg systems of
 multiplicative functions.

 \begin{Prop}
  \label{prop:key}
  Let $u=\bigl(u(n)\bigr)_{n\in\N}\in(\bs^1)^\N$. Assume that, for some $d\ge1$ and some
  increasing sequence $(N_m)\subset\N$, we have
  \begin{equation}
   \label{eq:hyp1}
   \frac{1}{N_m} \sum_{n\le N_m} \ph{d+1}\bigl(u(n),u(n+1),\ldots,u(n+d+1)\bigr) \tend{m}{\infty} 1,
  \end{equation}
  and that
\begin{equation}
   \label{eq:hyp2}
   \forall\ell\ge1,\ \frac{1}{N_m} \sum_{n\le N_m} \ph{d}^\ell\bigl(u(n),u(n+1),\ldots,u(n+d)\bigr) \tend{m}{\infty} 0.
  \end{equation}
  Then, along the subsequence $(N_m)$, $u$ is quasi-generic for the measure $\nu_d$ described in Remark~\ref{rem:nud}.
 \end{Prop}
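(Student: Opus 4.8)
The plan is to show that the measure $\nu_d$ of Remark~\ref{rem:nud} is the \emph{only} possible weak$^\ast$ limit point of the empirical measures $E_{N_m}(u)$; quasi-genericity of $u$ for $\nu_d$ along $(N_m)$ then follows by the usual compactness argument (in a compact metric space, if every subsequential limit of a sequence equals $L$ then the sequence converges to $L$). So I would fix a limit point: passing to a subsequence, which I keep denoting $(N_m)$ and along which the averages in~\eqref{eq:hyp1} and~\eqref{eq:hyp2} still converge to their stated limits, assume $E_{N_m}(u)\to\nu$. Recall that such a $\nu$ is automatically $S$-invariant and carried by $X_u$. By the uniqueness asserted in Remark~\ref{rem:nud}, it suffices to verify, for this $\nu$, the two hypotheses of Proposition~\ref{prop:nud}: that $\Yg{d}$ is $S$-invariant under $\nu$ (equivalently $\Yg{d+1}=1$ $\nu$-a.s.), and that the distribution of $\Yg{d}$ under $\nu$ is $\Leb_{\bs^1}$.

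The key point that makes this work is that $\Yg{d+1}=\ph{d+1}(Z_0,\ldots,Z_{d+1})$ and, for each $\ell\ge1$, $(\Yg{d})^\ell=\ph{d}^\ell(Z_0,\ldots,Z_d)$ define \emph{continuous} functions on $(\bs^1)^\N$: since $1/z=\bar z$ on $\bs^1$, each such ratio of monomials is a genuine polynomial in the coordinates and their conjugates, which is where the hypothesis $u\in(\bs^1)^\N$ (rather than $u\in\D^\N$) is used. Moreover each of these functions has modulus $1$ everywhere, so in particular it belongs to $C(X_u)$. Hence weak$^\ast$ convergence applies, and $\int \ph{d+1}(Z_0,\ldots,Z_{d+1})\,dE_{N_m}(u)$ equals the average in~\eqref{eq:hyp1} up to an $O(1/N_m)$ discrepancy coming from the slightly different ranges of summation (the coordinate map conventions shift indices by one); the same holds for~\eqref{eq:hyp2}. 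These $O(1/N_m)$ errors are harmless.

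Granting this, the first hypothesis is immediate: \eqref{eq:hyp1} gives $\mathbb{E}_\nu[\Yg{d+1}]=1$, and since $|\Yg{d+1}|=1$ $\nu$-a.s.\ we get $\mathbb{E}_\nu[\Re\,\Yg{d+1}]=1$ with $\Re\,\Yg{d+1}\le1$ pointwise, forcing $\Yg{d+1}=1$ $\nu$-a.s. For the second, let $\kappa$ be the law of $\Yg{d}$ under $\nu$; its Fourier coefficient $\widehat\kappa(\ell)=\mathbb{E}_\nu[(\Yg{d})^\ell]$ vanishes for $\ell\ge1$ by~\eqref{eq:hyp2}, and for $\ell\le-1$ by conjugation (using $|\Yg{d}|=1$, so $(\Yg{d})^\ell=\overline{(\Yg{d})^{-\ell}}$). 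A probability measure on $\bs^1$ all of whose non-zero Fourier coefficients vanish is $\Leb_{\bs^1}$, hence $\kappa=\Leb_{\bs^1}$. Both hypotheses of Proposition~\ref{prop:nud} now hold for $\nu$, so Remark~\ref{rem:nud} yields $\nu=\nu_d$.

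I do not expect a serious obstacle. The two things to check carefully are the continuity of $\ph{d+1}$ and of the powers $\ph{d}^\ell$ as functions on $(\bs^1)^\N$ (so that weak$^\ast$ convergence may be applied), and the bookkeeping of summation ranges, which only costs $O(1/N_m)$. The one genuinely structural input is the uniqueness assertion in Remark~\ref{rem:nud}: it is what upgrades the distributional information ``$\Yg{d+1}=1$ $\nu$-a.s.\ and $\Yg{d}\sim\Leb_{\bs^1}$'' to the identification of the full shift-invariant measure $\nu$ with $\nu_d$.
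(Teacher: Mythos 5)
Your argument is correct and follows essentially the same route as the paper: take an arbitrary weak$^\ast$ limit point $\mu$ of $E_{N_m}(u)$, use~\eqref{eq:hyp1} and the fact that $|\Yg{d+1}|=1$ to force $\Yg{d+1}=1$ $\mu$-a.s., use~\eqref{eq:hyp2} to see that all nonzero Fourier coefficients of the law of $\Yg{d}$ vanish, and then invoke Proposition~\ref{prop:nud} together with the uniqueness statement of Remark~\ref{rem:nud} to get $\mu=\nu_d$, whence convergence along $(N_m)$ by compactness. The extra details you supply (continuity of $\ph{d}^\ell$ on $(\bs^1)^{\N}$, the real-part trick, the $O(1/N_m)$ bookkeeping) are exactly the points the paper leaves implicit.
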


 \begin{proof}
  Let $\mu$ be a shift-invariant measure on $(\bs^1)^\N$ such that, along some subsequence of $(N_m)$, $u$ is quasi-generic for $\mu$.
  From~\eqref{eq:hyp1}, we get that
  \[ 1=\EE_\mu\left[ \ph{d+1}(Z_0,\ldots,Z_{d+1}) \right] = \EE_\mu\left[ \Yg{d+1} \right],
  \]
  hence $\Yg{d+1}=1$ $\mu$-a.s. And from~\eqref{eq:hyp2}, we get that for each $\ell\ge1$,
  \[ 0=\EE_\mu\left[ \ph{d}^\ell(Z_0,\ldots,Z_{d}) \right] = \EE_\mu\left[ (\Yg{d})^\ell \right],
  \]
  which shows that, under $\mu$, $\Yg{d}$ is uniformly distributed on $\bs^1$. Then, with Proposition~\ref{prop:nud} and Remark~\ref{rem:nud}, we conclude that $\mu = \nu_d$.
 \end{proof}

 \subsection{Special sequences of polynomials}
 \label{sec:polynomials}

 We now introduce two sequences $(P_d)_{d\ge0}$ and $(Q_d)_{d\ge0}$ of polynomials in the variable $n$, linked to the analysis of the preceding section by the following definition: for each $d\ge0$,
 \begin{equation}
  \label{eq:defPQ}
  P_d(n):=\pi_d(n,n+1,\ldots,n+d),\quad\text{and}\quad Q_d(n):=\tilde\pi_d(n,n+1,\ldots,n+d).
 \end{equation}
The first polynomials of this family are given below (compare with the numerators and denominators in~\eqref{eq:phid}).
\begin{align*}
    P_0(n)=n\ &;\ Q_0(n)=1\ ;\\
    P_1(n)=n+1\ &;\ Q_1(n)=n\ ;\\
    P_2(n)=n(n+2)\ &;\ Q_2(n)=(n+1)^2\ ;\\
    P_3(n)=(n+1)^3 (n+3)\ &;\ Q_3(n)=n (n+2)^3\ ;\\
    P_4(n)=n (n+2)^6 (n+4)\ &;\ Q_4(n)=(n+1)^4 (n+3)^4\ .
\end{align*}
Note that, according to \eqref{eq:defpi1} and \eqref{eq:defpi2}, these sequences of polynomials satisfy the following recurrence property:
 \begin{align}
 \label{eq:induction_PQ}
  \begin{split}
  P_{d+1}(n) &= P_d(n+1) Q_d(n),\\
  Q_{d+1}(n) &= Q_d(n+1) P_d(n).
 \end{split}
 \end{align}
%
%
%

\begin{Prop}
  \label{prop:polynomials}
  For each $d\ge1$, $P_d$ and $Q_d$ are both of degree $2^{d-1}$, and the degree of the difference $R_d:=Q_d-P_d$ is $2^{d-1}-d$.
 \end{Prop}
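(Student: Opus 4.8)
The plan is to establish all three assertions simultaneously by a single induction on $d$, using the recurrence \eqref{eq:induction_PQ}. Write $P_d(n) = a_d n^{2^{d-1}} + (\text{lower order})$ and $Q_d(n) = b_d n^{2^{d-1}} + (\text{lower order})$. First I would verify the base case: from the explicit list, $P_1(n)=n+1$ and $Q_1(n)=n$ both have degree $1 = 2^0$, and $R_1 = Q_1 - P_1 = -1$ has degree $0 = 2^0 - 1$, so the claim holds for $d=1$. (It is also worth recording $P_0=n$, $Q_0=1$ as the starting data for the recurrence, even though the degree statement is only claimed for $d\ge1$.)

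For the inductive step, assume $\deg P_d = \deg Q_d = 2^{d-1}$ and $\deg R_d = 2^{d-1}-d$. From \eqref{eq:induction_PQ}, $P_{d+1}(n) = P_d(n+1)Q_d(n)$ and $Q_{d+1}(n) = Q_d(n+1)P_d(n)$, so each has degree $2^{d-1}+2^{d-1} = 2^d$, giving the first assertion at level $d+1$. The key observation is that $P_d$ and $Q_d$ have the \emph{same} leading coefficient: indeed $R_d = Q_d - P_d$ has degree strictly smaller than $2^{d-1}$, so $a_d = b_d$. Denote this common leading coefficient $c_d$; then the leading coefficients of both $P_{d+1}$ and $Q_{d+1}$ equal $c_d^2$, so again $P_{d+1}$ and $Q_{d+1}$ share a leading coefficient, which propagates the hypothesis $\deg R_{d+1} < \deg P_{d+1}$ and closes that part of the induction.

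The substantive point is the \emph{exact} degree of $R_{d+1}$. I would compute
\[
R_{d+1}(n) = Q_{d+1}(n) - P_{d+1}(n) = Q_d(n+1)P_d(n) - P_d(n+1)Q_d(n).
\]
Add and subtract $P_d(n+1)P_d(n)$ (or $Q_d(n+1)Q_d(n)$) to factor this as
\[
R_{d+1}(n) = P_d(n)\bigl(Q_d(n+1)-P_d(n+1)\bigr) - P_d(n+1)\bigl(Q_d(n)-P_d(n)\bigr) = P_d(n)R_d(n+1) - P_d(n+1)R_d(n).
\]
Now $P_d(n)R_d(n+1)$ and $P_d(n+1)R_d(n)$ each have degree $2^{d-1} + (2^{d-1}-d) = 2^d - d$, so a priori $\deg R_{d+1} \le 2^d - d$; I must show the leading terms do \emph{not} fully cancel and instead leave degree exactly $2^d - d - (d+1) = 2^d - (d+1)$, i.e.\ the top $d+1$ coefficients cancel but the next one does not. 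This is the main obstacle. I expect to handle it by writing $P_d(n+1) = P_d(n) + P_d'(n) + \tfrac12 P_d''(n) + \cdots$ and $R_d(n+1) = R_d(n) + R_d'(n) + \cdots$ (finite Taylor expansions, valid for polynomials), whence
\[
R_{d+1}(n) = P_d(n)\bigl(R_d'(n) + \tfrac12 R_d''(n)+\cdots\bigr) - R_d(n)\bigl(P_d'(n)+\tfrac12 P_d''(n)+\cdots\bigr).
\]
The term $P_d(n)R_d'(n)$ has degree $2^{d-1} + (2^{d-1}-d-1) = 2^d - d - 1$, and $R_d(n)P_d'(n)$ has degree $(2^{d-1}-d)+(2^{d-1}-1) = 2^d - d - 1$ as well; all remaining terms have strictly smaller degree. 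So it remains to check that the leading coefficients of $P_d R_d'$ and $R_d P_d'$ differ. If $P_d$ has leading term $c_d n^{D}$ with $D = 2^{d-1}$ and $R_d$ has leading term $r_d n^{E}$ with $E = 2^{d-1}-d$, then $P_d R_d'$ has leading coefficient $c_d \cdot E \cdot r_d$ while $R_d P_d'$ has leading coefficient $r_d \cdot D \cdot c_d$, and these are unequal precisely because $E \ne D$ (as $d\ge1$) and $c_d r_d \ne 0$ — the latter requiring that I carry along in the induction the auxiliary fact that $R_d$ is not the zero polynomial and has nonzero leading coefficient, which is exactly what "$\deg R_d = 2^{d-1}-d$" asserts. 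Hence $\deg R_{d+1} = 2^d - d - 1 = 2^{d}-(d+1)$, completing the induction. I would double-check the whole scheme against the tabulated $R_2 = Q_2 - P_2 = (n+1)^2 - n(n+2) = 1$ (degree $0 = 2^1-2$), $R_3 = n(n+2)^3 - (n+1)^3(n+3)$ (degree should be $2^2-3 = 1$), and $R_4$ (degree $2^3-4 = 4$) to make sure no sign or index slip has crept in.
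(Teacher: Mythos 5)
Your proposal is correct and follows essentially the same route as the paper: the same induction with the identity $R_{d+1}(n)=P_d(n)R_d(n+1)-P_d(n+1)R_d(n)$, where your Taylor-expansion comparison of $P_dR_d'$ and $R_dP_d'$ is just a repackaging of the paper's Lemma on $R(n+1)P(n)-R(n)P(n+1)$, whose leading coefficient is $(r-p)a_Pa_R n^{r+p-1}$. (Note only a slip in your prose: exactly one top coefficient cancels, not $d+1$ of them, and the target degree is $2^d-d-1=2^d-(d+1)$; your actual computation gets this right.)
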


 The computation of the degree of $P_d$ and $Q_d$ is a straightforward induction using~\eqref{eq:induction_PQ}. For the degree of $R_d$, we will use the following lemma.

 \begin{Lemma}
  \label{lemma:degree_of_R}
  Let $P$ and $R$ be two polynomials, of degree $p$ and $r$, respectively and with $p\neq r$. Let $\tilde R$ be the polynomial defined by
  \[  \tilde{R}(n) := R(n+1)P(n)-R(n)P(n+1). \]
  Then the degree of $\tilde R$ is $r+p-1$.
 \end{Lemma}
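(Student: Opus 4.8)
The plan is to prove Lemma~\ref{lemma:degree_of_R} by a direct inspection of leading coefficients, being careful to track the first two coefficients of $P$ and $R$ since the naive leading terms may cancel. Write $P(n)=a n^p+a' n^{p-1}+\cdots$ with $a\neq 0$, and $R(n)=b n^r+b' n^{r-1}+\cdots$ with $b\neq 0$. The key elementary fact is that for any polynomial $f$ of degree $k$ with leading coefficient $c$ and subleading coefficient $c'$, the ``discrete derivative'' $f(n+1)-f(n)$ has degree $k-1$ with leading coefficient $kc$; more precisely $f(n+1)=c n^k+(c'+kc)n^{k-1}+\cdots$. I would apply this bookkeeping to both $R(n+1)P(n)$ and $R(n)P(n+1)$ and subtract.

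The main computation: the product $R(n+1)P(n)$ has the form $\bigl(b n^r+(b'+rb)n^{r-1}+\cdots\bigr)\bigl(a n^p+a' n^{p-1}+\cdots\bigr)$, whose top two coefficients are $ab$ (for $n^{p+r}$) and $ab'+a'b+rab$ (for $n^{p+r-1}$). Symmetrically $R(n)P(n+1)$ has top two coefficients $ab$ and $ab'+a'b+pab$. Hence in $\tilde R=R(n+1)P(n)-R(n)P(n+1)$ the $n^{p+r}$ terms cancel exactly, and the coefficient of $n^{p+r-1}$ equals $(r-p)ab$. Since $a\neq 0$, $b\neq 0$, and by hypothesis $p\neq r$, this coefficient is nonzero, so $\deg\tilde R=p+r-1$, as claimed.

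There is essentially only one thing to be careful about, and it is the only ``obstacle'': one must verify that nothing forces a further cancellation at the next order, i.e.\ that the $n^{p+r-1}$ coefficient genuinely survives. The computation above shows it equals $(r-p)ab\neq 0$ precisely because of the hypothesis $p\neq r$ (this is exactly where that hypothesis is used — if $p=r$ the degree drops further and the lemma would be false). The appearance of the subleading coefficients $a',b'$ is harmless: they enter both products with the same coefficient and cancel. So the proof is just this two-line coefficient comparison; no induction or deeper structure is needed here, the induction being deferred to the deduction of Proposition~\ref{prop:polynomials} from the lemma.

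To then finish Proposition~\ref{prop:polynomials}: the degrees of $P_d$ and $Q_d$ follow by immediate induction from~\eqref{eq:induction_PQ}, giving $\deg P_d=\deg Q_d=2^{d-1}$ for $d\ge1$. For $R_d=Q_d-P_d$, use~\eqref{eq:induction_PQ} to write
\[
 R_{d+1}(n)=Q_d(n+1)P_d(n)-P_d(n+1)Q_d(n)=Q_d(n+1)P_d(n)-\bigl(Q_d(n+1)-R_d(n+1)\bigr)Q_d(n),
\]
and rearrange into $R_{d+1}(n)=R_d(n+1)Q_d(n)+Q_d(n+1)\bigl(P_d(n)-Q_d(n)\bigr)=R_d(n+1)Q_d(n)-R_d(n)Q_d(n+1)$; this is exactly the form $\tilde R$ with $P=Q_d$ (degree $p=2^{d-1}$) and $R=R_d$ (degree $r=2^{d-1}-d$ by the inductive hypothesis). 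Since $p\neq r$ for $d\ge1$, Lemma~\ref{lemma:degree_of_R} gives $\deg R_{d+1}=p+r-1=2^d-d-1=2^{(d+1)-1}-(d+1)$, closing the induction; the base case $d=1$ is $R_1(n)=Q_1(n)-P_1(n)=n-(n+1)=-1$, of degree $0=2^0-1$.
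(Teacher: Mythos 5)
Your proof of the lemma is correct and is essentially the paper's own argument: a comparison of the two highest-order coefficients of $R(n+1)P(n)$ and $R(n)P(n+1)$, yielding leading term $(r-p)a_Pa_R\,n^{r+p-1}$, nonzero exactly because $p\neq r$. Your deduction of Proposition~\ref{prop:polynomials} also matches the paper's induction, differing only in the inessential choice of eliminating $P_d$ (so the lemma is applied with $Q_d$) rather than eliminating $Q_d$ as the paper does.
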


 \begin{proof}
  Let us write the two terms of highest degree in $P$ and $R$:
  \[ P(n) = a_P n^p + b_P n^{p-1}+\cdots \]
  and
  \[ R(n) = a_R n^r + b_R n^{r-1}+\cdots \]
  where $a_P a_R \neq 0$.
  Then a direct computation shows that the two terms of highest degree in $R(n+1)P(n)$ are
  \[ R(n+1)P(n) = a_R\,a_P\, n^{r+p}
    + \left( a_R\,b_P+a_P\,b_R+r\,a_P\,a_R \right) n^{r+p-1}+\cdots
  \]
Likewise, the two terms of highest degree in $R(n)P(n+1)$ are
  \[ R(n)P(n+1) = a_R\,a_P\, n^{r+p}
    + \left( a_R\,b_P+a_P\,b_R+p\,a_P\,a_R \right) n^{r+p-1}+\cdots
  \]
  It follows that the term of highest degree in $\tilde R(n)$ is $(r-p)a_P\,a_R\,n^{r+p-1}$.
 \end{proof}

 \begin{proof}[End of the proof of Proposition~\ref{prop:polynomials}]
  We show by induction on $d\ge1$ that the degree of $R_d$ is $2^{d-1}-d$. This is already true for $d=1$ since $R_1=-1$ is of degree 0.
  Then, assume that the result holds for some $d\ge1$, and let us consider the polynomial $R_{d+1}$. We have
  \begin{align*}
   R_{d+1}(n) &= Q_{d+1}(n) - P_{d+1}(n) \\
            &= Q_d(n+1)P_d(n)-P_d(n+1)Q_d(n)\\
            &= \bigl( P_d(n+1)+R_d(n+1) \bigr) P_d(n) - P_d(n+1) \bigl( P_d(n)+R_d(n) \bigr)\\
            &= R_d(n+1)  P_d(n) - R_d(n) P_d(n+1).
  \end{align*}
  But we know that the degree of $P_d$ is $2^{d-1}$, and by the induction assumption the degree of $R_d$ is $2^{d-1}-d$. These degrees are different, therefore Lemma~\ref{lemma:degree_of_R} applies, and shows that the degree of $R_{d+1}$ is $2^d-(d+1)$.

 \end{proof}

 \subsection{Furstenberg systems of MRT multiplicative functions}
 \label{s:dowodMRT}

 Now, we consider a completely multiplicative arithmetic function $\bfu=\bigl(\bfu(n)\bigr)_{n\ge1}\in\MRT$. Let $(t_m)$ and $(s_m)$ be the associated sequences, as in Definition~\ref{def:MRT}.
 Let $(N_m)$ be an increasing sequence of integers with $N_m\le t_{m+1}$, and let $d\ge0$ and $\ell\ge1$ be fixed integers.
  Our purpose is to apply Proposition~\ref{prop:key} to $\bfu$, therefore we have to evaluate the expressions of the form
 \begin{equation}
 \label{eq:form}
   \frac{1}{N_m} \sum_{1\le n\le N_m} \ph{d}^\ell\bigl(\bfu(n),\bfu({n+1}),\ldots,\bfu({n+d})\bigr).
  \end{equation}

By Proposition~\ref{prop:MRT}, we have
 \begin{align}
 & \frac{1}{N_m} \sum_{1\le n\le N_m} \ph{d}^\ell\bigl( \bfu(n),\ldots,\bfu(n+d)\bigr)\nonumber \\
 & = \frac{1}{N_m} \sum_{1\le n\le N_m} \ph{d}^\ell\bigl(n^{is_{m+1}},\ldots,(n+d)^{is_{m+1}}\bigr) + {\rm o}(1) \nonumber \\
 & = \frac{1}{N_m} \sum_{1\le n\le N_m} e^{i\ell s_{m+1} \log \frac{P_d(n)}{Q_d(n)}} + {\rm o}(1).
 \label{eq:exp}
 \end{align}
 Therefore, in view of applying Proposition~\ref{prop:nud}, we can replace an expression of the form~\eqref{eq:form} by
 \[
  \frac{1}{N_m} \sum_{1\le n\le N_m} e^{i\ell s_{m+1} f_d(n)},
 \]
 where $f_d$ is defined by
  \begin{equation}
  \label{eq:def_fd}
  f_d(x) := \log \frac{P_d(x)}{Q_d(x)}.
 \end{equation}

 Note that, by Proposition~\ref{prop:polynomials}, for $d\ge1$, the $d$ terms of highest degrees in $P_d$ and $Q_d$ coincide. In particular, we have
 \[ \lim_{x\to\infty} \frac{P_d(x)}{Q_d(x)} = 1, \]
so $f_d(x)$ is well defined for $x\in\RR$ large enough. We will use the following results on the asymptotic behaviour of $f_d$.

%
%

\begin{Lemma}
 \label{lemma:fd}
 For each $d\ge1$, there exists $K_d\neq0$ such that
 \begin{equation}
  \label{eq:fd}
  f_d(x) \sim  \frac{K_d}{x^{d}}\quad \text{as }x\to\infty.
 \end{equation}
For  each $d\ge0$, there exists $L_d\neq 0$ such that
 \begin{equation}
  \label{eq:fdprime}
  f'_d(x) \sim  \frac{L_d}{x^{d+1}}\quad \text{as }x\to\infty.
 \end{equation}
Moreover, there exists $H_d>0$ such that $f'_d$ is monotone on $[H_d,+\infty)$.
\end{Lemma}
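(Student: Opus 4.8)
The plan is to establish the three asymptotics in Lemma~\ref{lemma:fd} by exploiting the structure of $P_d$ and $Q_d$ already obtained in Proposition~\ref{prop:polynomials}, namely that $\deg P_d=\deg Q_d=2^{d-1}$ while $\deg R_d=\deg(Q_d-P_d)=2^{d-1}-d$. Writing $f_d(x)=\log(P_d(x)/Q_d(x))=\log\bigl(1-R_d(x)/Q_d(x)\bigr)$ and noting that $R_d(x)/Q_d(x)=O(x^{-d})\to0$, the Taylor expansion $\log(1-t)=-t-t^2/2-\cdots$ immediately gives $f_d(x)\sim -R_d(x)/Q_d(x)$, and since the leading coefficients of $R_d$ and $Q_d$ are nonzero, the ratio is asymptotic to $c_d x^{-d}$ for some $c_d\neq0$; this is $K_d=-c_d\ne0$, proving~\eqref{eq:fd}.

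For~\eqref{eq:fdprime}, the cleanest route is to differentiate the identity $f_d(x)=\log P_d(x)-\log Q_d(x)$ to get $f'_d(x)=P'_d(x)/P_d(x)-Q'_d(x)/Q_d(x)=\bigl(P'_d(x)Q_d(x)-P_d(x)Q'_d(x)\bigr)/\bigl(P_d(x)Q_d(x)\bigr)$. The denominator has degree $2^d$ with nonzero leading coefficient. For the numerator $W_d:=P'_d Q_d - P_d Q'_d$, I would substitute $P_d=Q_d-R_d$ to obtain $W_d = (Q'_d-R'_d)Q_d-(Q_d-R_d)Q'_d = R_d Q'_d - R'_d Q_d$, which is (up to sign) exactly the Wronskian-type combination handled in Lemma~\ref{lemma:degree_of_R} applied with the pair $(Q_d,R_d)$ (degrees $2^{d-1}$ and $2^{d-1}-d$, which differ); that lemma shows $\deg W_d = 2^{d-1}+(2^{d-1}-d)-1 = 2^d-d-1$ with nonzero leading coefficient. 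Hence $f'_d(x)$ is a rational function whose numerator has degree $2^d-d-1$ and denominator degree $2^d$, so $f'_d(x)\sim L_d x^{-(d+1)}$ with $L_d\ne0$, giving~\eqref{eq:fdprime}. (One should check the degenerate small-$d$ cases separately, e.g.\ $d=0$ where $f_0(x)=\log x$ and $f'_0(x)=1/x$ directly, consistent with $L_0\ne0$.) As a sanity check, this is also consistent with differentiating the asymptotic $f_d(x)\sim K_d x^{-d}$, which formally yields $f'_d(x)\sim -dK_d x^{-d-1}$ for $d\ge1$; one can in fact upgrade this to a rigorous argument since $f_d$ is a rational-log function and its derivative is genuinely a rational function, so term-by-term differentiation of the expansion is legitimate.

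Finally, for the monotonicity of $f'_d$ on a half-line $[H_d,+\infty)$, I would argue that $f'_d$ is a rational function (a ratio of polynomials) that is defined and smooth for $x$ large, and by~\eqref{eq:fdprime} it tends to $0$ at $+\infty$. Differentiating once more, $f''_d$ is again a rational function; repeating the leading-order analysis (either directly, or by noting $f''_d(x)\sim -(d+1)L_d x^{-(d+2)}$) shows $f''_d(x)$ has a definite sign for all sufficiently large $x$, say for $x\ge H_d$, because a nonzero rational function has only finitely many zeros and its sign stabilises beyond the largest one. Therefore $f'_d$ is monotone on $[H_d,+\infty)$, as required. The only mildly delicate point—and the one I would be most careful about—is verifying that none of the relevant leading coefficients accidentally vanish: this is guaranteed for $R_d$ and $Q_d$ by Proposition~\ref{prop:polynomials}, and for $W_d$ by the explicit leading coefficient $(r-p)a_Pa_R$ furnished in Lemma~\ref{lemma:degree_of_R}, which is nonzero precisely because $\deg R_d\ne\deg Q_d$; so all three conclusions follow without any further genuine obstacle.
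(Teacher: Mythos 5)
Your proof is correct and follows essentially the same route as the paper: expand the logarithm using $R_d=Q_d-P_d$ to obtain \eqref{eq:fd}, write $f'_d$ as a rational function whose numerator is the Wronskian-type combination of degree $2^d-(d+1)$ over the denominator $P_dQ_d$ of degree $2^d$, and conclude monotonicity from the fact that the nonzero rational function $f''_d$ has finitely many zeros. The only small imprecision is your appeal to Lemma~\ref{lemma:degree_of_R}, which concerns the shifted difference $R(n+1)P(n)-R(n)P(n+1)$ rather than the derivative combination $R_dQ'_d-R'_dQ_d$; strictly one needs the analogous one-line leading-coefficient computation (leading term $(r-q)a_Ra_Qx^{r+q-1}$, nonzero since the degrees differ), which is what the paper carries out directly.
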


\begin{proof}
For $d=0$, note that $f_0(x)=\log x$, so that $f'_0(x)=\frac{1}{x}$ and the result concerning $f'_0$ is obvious. We consider now the case $d\ge1$.
From Proposition~\ref{prop:polynomials}, we can write
\[ f_d(n) = \log\frac{P_d(n)}{P_d(n)+R_d(n)} = -\log\left( 1 + \frac{R_d(n)}{P_d(n)} \right), \]
where $\deg P_d=2^{d-1}$ and $\deg R_d=2^{d-1}-d$, which yields~\eqref{eq:fd}.

Deriving $f_d$ gives
 \begin{align*}
f'_d &= \frac{Q_d}{P_d}\cdot \frac{P'_d(P_d+R_d)-P_d(P'_d+R'_d)}{Q_d^2} \\
     &= \frac{P'_dR_d-P_dR'_d}{P_dQ_d}.
 \end{align*}
 Since the degrees of $P_d$ and $R_d$ are different, the degree of the numerator is equal to $\deg P_d+\deg R_d-1= 2^d-(d+1)$. But the degree of the denominator is $\deg P_d+\deg Q_d=2^d$, and this gives~\eqref{eq:fdprime}.
 Finally, as a nonzero rational fraction, the second derivative $f_d''$ has finitely many zeros, from which we get the last claim of the lemma.
\end{proof}

We can now state the precise asymptotics which, together with Proposition~\ref{prop:key}, will allow us to identify some Furstenberg systems of $\bfu$.

 \begin{Prop}
  \label{prop:final}
  Let $d\ge0$ be a fixed integer, and choose a real number $\beta$ such that
  \[
   \frac{1}{d+1} < \beta < \dfrac{1}{d}\text{ if $d\ge1$, }\quad 1 < \beta < 2\text{ if $d=0$.}
  \]
  Set, for each $m\ge1$, $N_m:=\left\lfloor (s_{m+1})^\beta \right\rfloor$. Then
  \begin{equation}
   \label{eq:cas0}
   \forall \ell\ge1, \quad \frac{1}{N_m} \sum_{1\le n\le N_m} e^{i\ell s_{m+1} f_d(n)}\tend{m}{\infty}0,
  \end{equation}
and
  \begin{equation}
   \label{eq:cas1}
   \quad \frac{1}{N_m} \sum_{1\le n\le N_m} e^{i s_{m+1} f_{d+1}(n)}\tend{m}{\infty}1.
   \end{equation}
 \end{Prop}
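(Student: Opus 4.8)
The plan is to deduce both limits from the asymptotic information on $f_d$ and $f_{d+1}$ recorded in Lemma~\ref{lemma:fd}, by splitting the summation range $\{1,\dots,N_m\}$ at a suitable threshold and, for the part that does not collapse trivially, invoking the Kusmin--Landau estimate (Theorem~\ref{t:KuLa}). Throughout I would fix once and for all a constant $H\ge 1$, large enough (depending only on $d$) that on $[H,+\infty)$ the function $f'_d$ is monotone, and the asymptotics $f_d'(x)\sim L_d x^{-(d+1)}$ and $f_{d+1}(x)\sim K_{d+1}x^{-(d+1)}$ of Lemma~\ref{lemma:fd} hold with implied constants between $\tfrac12$ and $2$; since $f'_d$ is monotone and tends to $0$, the function $|f'_d|$ is nonincreasing on $[H,+\infty)$.

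For~\eqref{eq:cas1}, the idea is simply that $s_{m+1}f_{d+1}(n)$ is uniformly small on the bulk of the range. Fix $\delta\in(0,1)$. For $m$ large and $\delta N_m\le n\le N_m$ one has $|s_{m+1}f_{d+1}(n)|\le 2|K_{d+1}|\,s_{m+1}/(\delta N_m)^{d+1}$, which is of order $\delta^{-(d+1)}s_{m+1}^{\,1-(d+1)\beta}$ and hence tends to $0$ because $\beta>\tfrac1{d+1}$ forces $(d+1)\beta>1$. Thus $e^{is_{m+1}f_{d+1}(n)}\to 1$ uniformly over this range, while the remaining at most $\delta N_m$ terms each have modulus $\le 1$; therefore $\bigl|\tfrac1{N_m}\sum_{n\le N_m}e^{is_{m+1}f_{d+1}(n)}-1\bigr|\le 2\delta+{\rm o}(1)$, and letting $\delta\to0$ gives~\eqref{eq:cas1}. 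Only the lower bound on $\beta$ is used here.

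For~\eqref{eq:cas0}, fix $\ell\ge1$ and write $e^{i\ell s_{m+1}f_d(n)}=e^{2\pi i g_m(n)}$ with $g_m:=\tfrac{\ell s_{m+1}}{2\pi}f_d$, so that $g'_m$ is monotone and $|g'_m|$ is nonincreasing on $[H,+\infty)$. I would split the range at $M_m:=\lfloor C_0\,s_{m+1}^{1/(d+1)}\rfloor$, where $C_0=C_0(d,\ell)$ is chosen so that $|g'_m(x)|\le\tfrac12$ for every $x\in[M_m,N_m]$ and $m$ large; this is possible because $M_m^{d+1}$ is of order $C_0^{d+1}s_{m+1}$, so $|g'_m(M_m)|$ is at most a constant multiple of $C_0^{-(d+1)}$, and $|g'_m|$ decreases. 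On $\{1,\dots,M_m\}$ I use the trivial bound $M_m$; since $\beta>\tfrac1{d+1}$, the ratio $M_m/N_m$ is of order $s_{m+1}^{1/(d+1)-\beta}\to0$. On $\{M_m+1,\dots,N_m\}$ I would decompose dyadically in the size of $|g'_m|$: for $j\ge1$ set $I_{m,j}:=\{x\in(M_m,N_m]:\,2^{-j-1}<|g'_m(x)|\le 2^{-j}\}$, which is an interval by monotonicity of $|g'_m|$; these intervals exhaust $(M_m,N_m]$ because $|g'_m|\le\tfrac12$ and $|g'_m|>0$ there, and $I_{m,j}=\emptyset$ once $j$ exceeds $J_m$, where $2^{-J_m}$ is of order $|g'_m(N_m)|$, i.e.\ $J_m=\bigl((d+1)\beta-1\bigr)\log_2 s_{m+1}+O(1)\to+\infty$. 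On each $I_{m,j}$ the derivative $g'_m$ is monotone and $\|g'_m(x)\|=|g'_m(x)|\ge 2^{-j-1}$ (because $|g'_m|\le\tfrac12$), so Theorem~\ref{t:KuLa} gives $\bigl|\sum_{n\in I_{m,j}}e^{2\pi i g_m(n)}\bigr|\ll 2^{j}$. Summing the geometric series over $1\le j\le J_m$ yields $\bigl|\sum_{M_m<n\le N_m}e^{2\pi i g_m(n)}\bigr|\ll 2^{J_m}$, which is of order $s_{m+1}^{(d+1)\beta-1}$; dividing by $N_m$, of order $s_{m+1}^{\beta}$, leaves a quantity of order $s_{m+1}^{d\beta-1}$, and this tends to $0$ precisely because $\beta<\tfrac1d$ (for $d=0$ this exponent equals $-1$). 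Combining the two pieces proves~\eqref{eq:cas0}.

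The only genuinely delicate step is the range $(M_m,N_m]$ in~\eqref{eq:cas0}: there $g'_m$ sweeps continuously through all magnitudes from about $\tfrac12$ down to $0$, so no single application of Kusmin--Landau suffices, and one must organise the dyadic-in-$|g'_m|$ decomposition and then check that the resulting geometric sum of the individual bounds $2^{j}$ is still of smaller order than $N_m$. This is exactly the arithmetic that pins down the two-sided constraint $\tfrac1{d+1}<\beta<\tfrac1d$: the lower bound makes both $M_m/N_m$ and each $s_{m+1}f_{d+1}(n)$ on the bulk negligible, while the upper bound makes $2^{J_m}/N_m$ negligible.
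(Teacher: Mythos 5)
Your argument is correct, and its skeleton is the same as the paper's: discard an initial segment of negligible relative length, deduce~\eqref{eq:cas1} from the fact that $s_{m+1}f_{d+1}(n)$ is uniformly small on the bulk of the range (via Lemma~\ref{lemma:fd} and $\beta>\frac1{d+1}$), and deduce~\eqref{eq:cas0} from the Kusmin--Landau theorem applied to $g_m=\frac{\ell s_{m+1}}{2\pi}f_d$, with the same final exponent count $s_{m+1}^{\beta(d+1)-1}/N_m=s_{m+1}^{\beta d-1}\to0$ using $\beta<\frac1d$. The one structural deviation is your dyadic decomposition of $(M_m,N_m]$ according to the size of $|g'_m|$, and your accompanying claim that ``no single application of Kusmin--Landau suffices'' is mistaken: since $|g'_m|$ is nonincreasing and $\le\frac12$ on the whole tail, one has $\|g'_m(x)\|=|g'_m(x)|\ge |g'_m(N_m)|\gg s_{m+1}^{1-\beta(d+1)}$ there, so a single application with $\lambda_1=|g'_m(N_m)|$ already gives the bound ${\rm O}\bigl(s_{m+1}^{\beta(d+1)-1}\bigr)$ for the whole tail; this is exactly what the paper does, cutting at $s_{m+1}^\alpha$ with $\frac1{d+1}<\alpha<\beta$ rather than at $C_0\,s_{m+1}^{1/(d+1)}$. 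Your geometric sum is dominated by its last term, so the dyadic step reproduces the same estimate with extra bookkeeping; its only real content is the (worth making explicit) observation that on the tail $\frac{\ell s_{m+1}}{2\pi}|f'_d|\le\frac12$, so the distance to the nearest integer coincides with the absolute value --- a point the paper's single application also needs but leaves implicit.
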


An important tool in the proof of the above proposition is the following theorem of Kusmin-Landau, which we state as in \cite{Ri}.
Here, if $x$ is a real number, $\|x\|$ stands for the distance of $x$ to the nearest integer.

\begin{Th}[Kusmin-Landau Theorem] \label{t:KuLa}
If $f:[a,b]\to \R$ is $C^1$, $f'$ is monotone and
$\|f'\|\geq \lambda_1>0$ then
$$
\left|\sum_{n\in[a, b]}e^{i2\pi f(n)}\right|\leq\frac{2}{\pi\lambda_1}.$$
\end{Th}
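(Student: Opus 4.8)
The plan is to prove this classical estimate of Kusmin--Landau by the standard three moves: normalize $f'$, rewrite each summand as a forward difference, and sum by parts, exploiting one algebraic feature of the resulting weights.

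\emph{Normalization.} Write $e(t):=e^{2\pi i t}$ and $g_n:=e(f(n))$. Replacing $f$ by $-f$ conjugates the sum and replaces $f'$ by $-f'$, still monotone with the same $\|f'\|$, so I may assume $f'$ is non-decreasing. Being continuous and monotone, $f'$ has an interval as its image; since $\|f'\|\ge\lambda_1>0$, that interval contains no integer and so lies inside $(k,k+1)$ --- in fact inside $[k+\lambda_1,\,k+1-\lambda_1]$ --- for a single integer $k$. Subtracting the linear map $x\mapsto kx$ from $f$ changes no $g_n$, so I may further assume $f'(x)\in[\lambda_1,\,1-\lambda_1]\subset(0,1)$ throughout; in particular $\lambda_1\le 1/2$. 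Put $N_1:=\lceil a\rceil$, $N_2:=\lfloor b\rfloor$. If $[a,b]$ contains at most one integer then $\bigl|\sum g_n\bigr|\le 1\le\cot(\pi\lambda_1/2)\le 2/(\pi\lambda_1)$ and we are done (using $\lambda_1\le1/2$ and $\cot\theta\le1/\theta$ on $(0,\pi/2)$), so assume $N_2\ge N_1+1$.

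\emph{The key identity.} For $N_1\le n\le N_2-1$ set $\delta_n:=f(n+1)-f(n)=\int_n^{n+1}f'$, so $\delta_n\in[\lambda_1,1-\lambda_1]$, and since $f'$ is non-decreasing, $n\mapsto\delta_n$ is non-decreasing. From $g_{n+1}-g_n=g_n(e(\delta_n)-1)$ and $e(\delta_n)\ne1$ we obtain $g_n=w_n(g_{n+1}-g_n)$ with $w_n:=(e(\delta_n)-1)^{-1}$, and a one-line computation gives
\[
 w_n \;=\; \frac{e^{-\pi i\delta_n}}{2i\sin\pi\delta_n} \;=\; -\frac12-\frac{i}{2}\cot(\pi\delta_n).
\]
The point is that every $w_n$ lies on the vertical line $\{\operatorname{Re}=-1/2\}$; its imaginary part $-\tfrac12\cot\pi\delta_n$ is non-decreasing in $n$, and $|w_n|=\tfrac1{2\sin\pi\delta_n}$ (and likewise $|1+w_n|=\tfrac1{2\sin\pi\delta_n}$).

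\emph{Summation by parts and conclusion.} A partial summation turns $\sum_{n=N_1}^{N_2}g_n=g_{N_2}+\sum_{n=N_1}^{N_2-1}w_n(g_{n+1}-g_n)$ into
\[
 \sum_{n=N_1}^{N_2}g_n \;=\; (1+w_{N_2-1})\,g_{N_2}-w_{N_1}\,g_{N_1}+\sum_{j=N_1+1}^{N_2-1}(w_{j-1}-w_j)\,g_j .
\]
Since all $w_n$ have real part $-\tfrac12$, each difference $w_{j-1}-w_j$ is purely imaginary and, because $\cot\pi\delta_n$ is non-increasing in $n$, $|w_{j-1}-w_j|=\tfrac12(\cot\pi\delta_{j-1}-\cot\pi\delta_j)$; hence the last sum telescopes, contributing at most $\tfrac12(\cot\pi\delta_{N_1}-\cot\pi\delta_{N_2-1})$ in modulus. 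Combining this with $|1+w_{N_2-1}|=\tfrac1{2\sin\pi\delta_{N_2-1}}$ and $|w_{N_1}|=\tfrac1{2\sin\pi\delta_{N_1}}$, and using the half-angle identities $\tfrac{1+\cos u}{2\sin u}=\tfrac12\cot\tfrac u2$ and $\tfrac{1-\cos v}{2\sin v}=\tfrac12\tan\tfrac v2$, one obtains, with $u:=\pi\delta_{N_1}\le v:=\pi\delta_{N_2-1}$,
\[
 \Bigl|\sum_{N_1\le n\le N_2}g_n\Bigr| \;\le\; \frac12\cot\frac u2+\frac12\tan\frac v2 .
\]
On $[\pi\lambda_1,\pi(1-\lambda_1)]$ the map $u\mapsto\cot(u/2)$ decreases and $v\mapsto\tan(v/2)$ increases, while $\tan\!\big(\tfrac{\pi(1-\lambda_1)}{2}\big)=\tan\!\big(\tfrac\pi2-\tfrac{\pi\lambda_1}{2}\big)=\cot\tfrac{\pi\lambda_1}{2}$, so the right-hand side is at most $\cot(\pi\lambda_1/2)$; finally $\cot\theta\le1/\theta$ at $\theta=\pi\lambda_1/2\le\pi/4$ yields $\cot(\pi\lambda_1/2)\le 2/(\pi\lambda_1)$, which is the asserted bound (in fact a slightly sharper one).

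\emph{Main obstacle.} The only non-routine step is the identity $w_n=-\tfrac12-\tfrac i2\cot\pi\delta_n$. A naive summation by parts costs two endpoint terms of size $\asymp1/\lambda_1$ together with a total-variation term, and only gives a constant like $1+\cot(\pi\lambda_1/2)$ --- too weak by a stray ``$+1$''. Recognizing that all the $w_n$ sit on the line $\operatorname{Re}=-\tfrac12$ is precisely what makes the variation term telescope \emph{and} lets the two boundary terms recombine --- through the half-angle formulas --- into the sharp constant $\cot(\pi\lambda_1/2)\le2/(\pi\lambda_1)$. One must also not forget the preliminary reduction (to $f'$ non-decreasing with values in $(0,1)$, by conjugation and by subtracting a linear function) and the peeling-off of the trivial case of at most one integer in $[a,b]$.
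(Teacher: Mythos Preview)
Your proof is correct and is precisely the classical argument for the Kusmin--Landau inequality (normalize $f'$ to lie in $[\lambda_1,1-\lambda_1]$, write $g_n=w_n(g_{n+1}-g_n)$, observe $w_n=-\tfrac12-\tfrac i2\cot\pi\delta_n$ so that Abel summation produces a telescoping variation term, and combine the boundary contributions via the half-angle identities into $\cot(\pi\lambda_1/2)\le 2/(\pi\lambda_1)$). Each step checks out.

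As for comparison with the paper: there is nothing to compare. The paper does not prove Theorem~\ref{t:KuLa}; it merely quotes it as a tool, stated ``as in \cite{Ri}'', and uses it as a black box in the proof of Proposition~\ref{prop:final}. Your write-up therefore supplies what the paper deliberately omits.
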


\begin{proof}[Proof of Proposition~\ref{prop:final}]
 Choose $\alpha$ such that $\frac{1}{d+1}<\alpha<\beta$. We can replace the average in~\eqref{eq:cas0} and in~\eqref{eq:cas1} by the average over $s_{m+1}^\alpha\le n\le s_{m+1}^\beta$. For such an $n$, using~\eqref{eq:fd}, we get
 \[ \left| s_{m+1} f_{d+1}(n) \right| = {\rm O} \left(\frac{1}{s_{m+1}^{\alpha(d+1)-1}}\right)\tend{m}{\infty}0,
 \]
and this immediately gives~\eqref{eq:cas1}.

By Lemma~\ref{lemma:fd}, for $m$ large enough $f'_d$ is monotone on $\left[s_{m+1}^\alpha,s_{m+1}^\beta\right]$, and there exists $K>0$ (depending only on $\ell$ and $d$) such that, for $n$ in this interval,
\[ \| \ell s_{m+1} f'_d(n) \| \ge K \frac{s_{m+1}}{n^{d+1}} \ge K\frac{1}{s_{m+1}^{\beta(d+1)-1}}.\]
By Kusmin-Landau Theorem, we get
\[
 \left| \sum_{s_{m+1}^\alpha \le n \le s_{m+1}^\beta} e^{i\ell s_{m+1} f_d(n)} \right| = {\rm O} \left( s_{m+1}^{\beta(d+1)-1}  \right),
\]
and finally
\[ \left|\frac{1}{N_m} \sum_{1\le n\le N_m} e^{i\ell s_{m+1} f_d(n)} \right| = {\rm O} \left( s_{m+1}^{\beta d-1}  \right)\tend{m}{\infty}0.
\]
\end{proof}

\medskip

Putting together~\eqref{eq:exp}, Proposition~\ref{prop:final} and Proposition~\ref{prop:key}, we get the following result.

\begin{Th}
 For any $d\ge0$, the measure-preserving system $\left((\bs^1)^\N,  \nu_d, S\right)$ appears as a Furstenberg system of $\bfu$.
\end{Th}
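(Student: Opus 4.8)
The plan is to assemble the three earlier results into a short deduction. First I would fix $d\ge0$ and invoke Proposition~\ref{prop:final} with the parameter $\beta$ chosen in the stated range, so that setting $N_m:=\left\lfloor (s_{m+1})^\beta\right\rfloor$ gives both asymptotics \eqref{eq:cas0} and \eqref{eq:cas1}. Since $\beta<2$ (for $d=0$) and $\beta<1/d\le1$ (for $d\ge1$), in either case $\beta\le2$, hence $N_m\le (s_{m+1})^2\le t_{m+1}$ by the first bullet of Definition~\ref{def:MRT}; this is exactly the hypothesis $N_m\le t_{m+1}$ needed so that Proposition~\ref{prop:MRT} applies to this sequence $(N_m)$.

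Next I would run the reduction of Section~\ref{s:dowodMRT}: by \eqref{eq:exp} (which rests on Proposition~\ref{prop:MRT}), for every $\ell\ge1$,
\[
\frac{1}{N_m}\sum_{1\le n\le N_m}\ph{d}^\ell\bigl(\bfu(n),\ldots,\bfu(n+d)\bigr)
=\frac{1}{N_m}\sum_{1\le n\le N_m}e^{i\ell s_{m+1}f_d(n)}+{\rm o}(1),
\]
and similarly with $d$ replaced by $d+1$ and $\ell=1$. Combining this with \eqref{eq:cas0} and \eqref{eq:cas1} of Proposition~\ref{prop:final} shows that hypotheses \eqref{eq:hyp2} and \eqref{eq:hyp1} of Proposition~\ref{prop:key} are satisfied by $u=\bfu$ along $(N_m)$ — here one needs $d\ge1$ for Proposition~\ref{prop:key} as stated, while the case $d=0$ is handled directly since $\nu_0=(\Leb_{\bs^1})^{\otimes\N}$ and \eqref{eq:cas0} with $d=0$ is precisely the statement that the distribution of $Z_0$ is uniform, $\Yg{0}=Z_0$ being already $S$-invariant trivially (alternatively, apply Proposition~\ref{prop:key} with $d\ge1$ and pass to the limit via Proposition~\ref{c:rou10}). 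Then Proposition~\ref{prop:key} yields that $\bfu$ is quasi-generic for $\nu_d$ along $(N_m)$, i.e.\ $\left((\bs^1)^\N,\nu_d,S\right)$ is a Furstenberg system of $\bfu$; by Remark~\ref{rem:nud} this system is measure-theoretically isomorphic to the unipotent system on $\T^{d+1}$.

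I do not expect a serious obstacle here: the theorem is essentially a bookkeeping synthesis of Propositions~\ref{prop:MRT}, \ref{prop:key} and \ref{prop:final}. The one point requiring a moment of care is matching the index range — verifying $N_m\le t_{m+1}$ from $\beta\le2$ and the inequality $s_{m+1}^2\le t_{m+1}$ — and, for the degenerate case $d=0$, observing that Proposition~\ref{prop:key} is phrased for $d\ge1$ so that the $d=0$ conclusion must either be read off directly (only \eqref{eq:hyp2}-type conditions are needed, as $\Yg{1}=1$ automatically encodes nothing new when $d=0$ means $\nu_0$ is the product measure) or recovered afterwards. The genuinely hard analytic work — the degree computations of Proposition~\ref{prop:polynomials}, the asymptotics of $f_d$ and $f'_d$ in Lemma~\ref{lemma:fd}, and the Kusmin–Landau estimate in Proposition~\ref{prop:final} — has already been carried out, so the proof of this theorem itself is just the assembly step.
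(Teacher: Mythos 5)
For $d\ge1$ your write-up is exactly the paper's argument: the paper's proof of this theorem is literally the assembly of \eqref{eq:exp}, Proposition~\ref{prop:final} and Proposition~\ref{prop:key}, and your verification that $N_m=\lfloor s_{m+1}^{\beta}\rfloor\le s_{m+1}^2\le t_{m+1}$ (so that Proposition~\ref{prop:MRT}, hence \eqref{eq:exp}, applies along this sequence) is the right bookkeeping.

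The genuine problem is your treatment of $d=0$. You assert that $\nu_0=(\Leb_{\bs^1})^{\otimes\N}$ and that $\Yg{0}=Z_0$ is ``trivially $S$-invariant''; both claims are false. By Remark~\ref{rem:nud}, under $\nu_d$ the coordinate $Z_{n+d+1}$ is a deterministic function of $Z_n,\ldots,Z_{n+d}$ through the constraint $\Yg{d+1}=1$; for $d=0$ this constraint reads $Z_{n+1}=Z_n$ a.s., so $\nu_0$ is carried by the constant sequences with uniform marginal --- it is the identity on the circle (the $d=0$ case of the unipotent system in the Main Theorem), not the Bernoulli measure, which arises only later (Corollary~\ref{cor:final}) as a limit of the $\nu_d$ via Proposition~\ref{c:rou10}. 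Consequently both of your workarounds fail: the ``direct'' one discards precisely the nontrivial hypothesis, namely \eqref{eq:cas1} for $f_1$, i.e.\ $\frac1{N_m}\sum_{n\le N_m}\bfu(n+1)\overline{\bfu(n)}\to1$, which is what forces $\Yg{1}=1$, hence $Z_1=Z_0$, $\mu$-a.s.\ under any limit measure $\mu$ (this is the defining correlation of MRT functions, not an automatic fact, since $Z_0\circ S=Z_1$ need not equal $Z_0$ for a general $\mu$); and the alternative ``apply Proposition~\ref{prop:key} with $d\ge1$ and pass to the limit via Proposition~\ref{c:rou10}'' produces the product measure, not $\nu_0$, so it does not prove the $d=0$ statement. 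The fix is immediate: the restriction $d\ge1$ in Proposition~\ref{prop:key} is inessential, since its proof only invokes Proposition~\ref{prop:nud} and Remark~\ref{rem:nud}, which are stated for $d\ge0$; so run the same argument with $d=0$, using \eqref{eq:cas1} for $f_1$ to get $S$-invariance of $Z_0$ and \eqref{eq:cas0} for $f_0$ to get uniformity, and conclude $\mu=\nu_0$.
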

Using Proposition~\ref{c:rou10} 
and the fact that, under $\nu_d$, $d+1$ consecutive coordinates of the process $(Z_n)$ are independent and uniformly distributed on $\bs^1$, we obtain the following.

\begin{Cor}
 \label{cor:final}
  The system $\left((\bs^1)^\N,  \left(\Leb_{\bs^1}\right)^{\otimes \N},S\right)$ is a Furstenberg system of~$\bfu$.
\end{Cor}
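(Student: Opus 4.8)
The plan is to deduce Corollary~\ref{cor:final} from Proposition~\ref{c:rou10} by checking that its hypotheses are met with $\kappa = \Leb_{\bs^1}$. Recall that Proposition~\ref{c:rou10} asks for the following: for each $d\ge 0$, there is a measure $\nu\in V(\bfu)$ under which the coordinates $(Z_0,\ldots,Z_d)$ are distributed according to $\kappa^{\otimes(d+1)}$. The preceding Theorem says precisely that for each $d'\ge 0$ the measure $\nu_{d'}$ is a Furstenberg system of $\bfu$, i.e.\ $\nu_{d'}\in V(\bfu)$; and by Remark~\ref{rem:nud}, under $\nu_{d'}$ any $d'+1$ consecutive coordinates of the process $(Z_n)$ are independent and uniformly distributed on $\bs^1$. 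So, given $d$, I would simply take $\nu := \nu_d$ (or indeed any $\nu_{d'}$ with $d'\ge d$): under this measure $(Z_0,\ldots,Z_d)$ consists of $d+1$ consecutive coordinates, hence is distributed as $(\Leb_{\bs^1})^{\otimes(d+1)}$. This verifies the hypothesis of Proposition~\ref{c:rou10} for every $d$.

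With that verification in hand, Proposition~\ref{c:rou10} applied with $\kappa=\Leb_{\bs^1}$ yields directly that $(\Leb_{\bs^1})^{\otimes\N}\in V(\bfu)$, which is exactly the statement that $\bigl((\bs^1)^\N, (\Leb_{\bs^1})^{\otimes\N}, S\bigr)$ is a Furstenberg system of $\bfu$. Concretely, the proof of Proposition~\ref{c:rou10} proceeds by taking a weak$^\ast$-limit point $\rho$ of the sequence $(\nu_d)_{d\ge0}$ (which lies in $V(\bfu)$ since $V(\bfu)$ is weak$^\ast$-closed by Proposition~\ref{p:wlasnosci}(i)), and checking that the joint moments $\EE_\rho\bigl(Z_0^{q_1}\circ S^{j_1}\cdots Z_0^{q_k}\circ S^{j_k}\bigr)$ factor as $\prod_i \EE_{\Leb_{\bs^1}} Z_0^{q_i}$; this holds because the same moment identity already holds under $\nu_{d}$ for every $d\ge j_k$ (the first $j_k+1$ coordinates being independent and uniform there), and the relevant observable is continuous on $X_{\bfu}$.

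There is essentially no obstacle here: the corollary is a formal consequence of the just-proved Theorem together with the independence statement in Remark~\ref{rem:nud} and the general Proposition~\ref{c:rou10}. The only point requiring a word of care is that the index $d'$ of the approximating system $\nu_{d'}$ must be allowed to grow (one needs $d'\ge d$ to get $d+1$ independent coordinates), but this is exactly why Proposition~\ref{c:rou10} is phrased as a limit over $d$ rather than asking for a single $\nu$. Hence the proof is one line: apply Proposition~\ref{c:rou10} with $\kappa=\Leb_{\bs^1}$ and $\nu_d$ as above.

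\begin{proof}
 By the preceding Theorem, $\nu_d\in V(\bfu)$ for every $d\ge 0$, and by Remark~\ref{rem:nud}, under $\nu_d$ the coordinates $Z_0,Z_1,\ldots,Z_d$ are independent and each uniformly distributed on $\bs^1$, i.e.\ the distribution of $(Z_0,\ldots,Z_d)$ under $\nu_d$ equals $(\Leb_{\bs^1})^{\otimes(d+1)}$. Thus the hypothesis of Proposition~\ref{c:rou10} is satisfied with $\kappa=\Leb_{\bs^1}$, and that proposition gives $(\Leb_{\bs^1})^{\otimes\N}\in V(\bfu)$, which is the assertion.
\end{proof}
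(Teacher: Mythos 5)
Your proof is correct and is exactly the argument the paper uses: Corollary~\ref{cor:final} is obtained by applying Proposition~\ref{c:rou10} with $\kappa=\Leb_{\bs^1}$, using the preceding Theorem to get $\nu_d\in V(\bfu)$ and Remark~\ref{rem:nud} for the independence and uniform distribution of $d+1$ consecutive coordinates. Nothing to add.
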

This concludes the proof of the Main Theorem.

\section{Archimedean characters and slowly varying arithmetic functions}\label{s:Archimedes}

We recall that an {\em Archimedean character} is a completely multiplicative function of the form
$n\mapsto n^{it}$
with some fixed $t\in\R$.
As
\[
 \left| (n+1)^{it}-n^{it} \right| = \left|e^{it\log(1+1/n)}-1\right| \tend{n}{\infty} 0,
\]
Archimedean characters fall into the category of \emph{slowly varying} arithmetic functions, that are bounded arithmetic functions $u$
satisfying
 \[
  u(n+1)-u(n)\tend{n}{\infty}0.
 \]
A useful weakening of this property is the following: we say that the bounded arithmetic function $u$
is \emph{mean slowly varying} if
 \begin{equation}
 \label{eq:mean-sv}
  \frac{1}{N}\sum_{n\le N}|u(n+1)-u(n)|\tend{N}{\infty}0.
 \end{equation}
 Note that this is equivalent to the fact that $u(n+1)-u(n)\to0$ on a subset of $n$ of density~1.

 \medskip

It is worth mentioning a result by Klurman~\cite[Theorem~1.8]{klurman2017}, who proved a conjecture by K\'atai concerning mean slowly varying multiplicative functions (non necessarily bounded). From his theorem, it easily follows that the only mean slowly varying multiplicative functions taking values in the unit circle are the Archimedean characters.

\subsection{Furstenberg systems of (mean) slowly varying functions}

\begin{Prop} \label{p:slowlyvar} The arithmetic function $u:\N\to\D$ is mean slowly varying if and only if
all Furstenberg systems of $u$ are measure-theoretically isomorphic to the action of the identity
on some probability space.\end{Prop}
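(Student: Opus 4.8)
The plan is to prove both directions of the equivalence, the interesting one being that mean slow variation forces every Furstenberg system to be (isomorphic to) an identity system.

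\medskip

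\textbf{Mean slowly varying implies all Furstenberg systems are identity systems.} Suppose $u$ is mean slowly varying, and let $\nu\in V(u)$, say $E_{N_m}(u)\to\nu$. The hypothesis \eqref{eq:mean-sv} says precisely that $\frac1N\sum_{n\le N}|Z_1(S^nu)-Z_0(S^nu)|\to 0$, i.e.\ $\mathbb{E}_\nu|Z_1-Z_0|=0$, since $y\mapsto|y(2)-y(1)|$ is a continuous bounded function on $\mathbb{D}^{\N}$ and the averages along $(N_m)$ converge to the integral against $\nu$. Hence $Z_1=Z_0$ $\nu$-a.s., and by $S$-invariance $Z_{n+1}=Z_n$ $\nu$-a.s.\ for every $n\ge0$. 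Therefore, $\nu$-a.s., the point $y\in X_u$ is constant, $y=(c,c,c,\ldots)$; equivalently $\nu$ is carried by the set of fixed points of $S$ in $X_u$. On that set $S$ acts as the identity, so $(X_u,\nu,S)$ is (literally) an identity system, in particular measure-theoretically isomorphic to the action of the identity on the probability space $(\mathbb{D},\kappa,\mathrm{id})$ where $\kappa=(Z_0)_*\nu$.

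\medskip

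\textbf{Converse.} Suppose $u$ is not mean slowly varying. Then $\limsup_N \frac1N\sum_{n\le N}|u(n+1)-u(n)|=:c>0$, so there is an increasing sequence $(N_m)$ along which $\frac1{N_m}\sum_{n\le N_m}|u(n+1)-u(n)|\to c$ (or at least stays bounded below by $c/2$). Passing to a further subsequence by compactness of $M(\mathbb{D}^{\N})$, we may assume $E_{N_m}(u)\to\nu$ for some $\nu\in V(u)$. Then, using again that $y\mapsto|y(2)-y(1)|$ is continuous and bounded, $\mathbb{E}_\nu|Z_1-Z_0|=c>0$, so $Z_1\neq Z_0$ on a set of positive $\nu$-measure. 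If $(X_u,\nu,S)$ were isomorphic to an identity system, then $S=\mathrm{Id}$ $\nu$-a.s., which would force $Z_1=Z_0\circ S=Z_0$ $\nu$-a.s., a contradiction. Hence not all Furstenberg systems of $u$ are identity systems.

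\medskip

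\textbf{Main point to be careful about.} The only subtlety is the ``identity system'' bookkeeping: one should note that saying $(X_u,\nu,S)$ is measure-theoretically isomorphic to the action of the identity on some probability space is equivalent to $S=\mathrm{Id}$ on a set of full $\nu$-measure (a $\nu$-a.e.\ fixed point condition), which is what both directions use. There is no analytic difficulty — the whole argument rests on the single observation that mean slow variation is exactly the statement $\mathbb{E}_\nu|Z_1-Z_0|=0$ for every $\nu\in V(u)$, made rigorous by the fact that $|Z_1-Z_0|$ is a bounded continuous function on $\mathbb{D}^{\N}$ so its $\nu$-integral is read off from Cesàro averages along the defining subsequence. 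Combined with the remark of Klurman quoted just before the proposition, this also yields, for multiplicative $u$ taking values in $\bs^1$, that all Furstenberg systems are identity systems if and only if $u$ is an Archimedean character.
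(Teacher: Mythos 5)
Your proof is correct and follows essentially the same route as the paper: the forward direction reads $\EE_\nu\bigl[|Z_1-Z_0|\bigr]=0$ off the defining averages to get $S=\Id$ $\nu$-a.e., and the converse extracts a quasi-generic subsequence along which the mean-variation stays bounded away from zero, giving $\EE_\nu\bigl[|Z_1-Z_0|\bigr]>0$. Your explicit remark that isomorphism to an identity system forces $S=\Id$ $\nu$-a.e.\ is exactly the bookkeeping the paper uses implicitly, so there is nothing to add.
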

\begin{proof}
Assume first that $u$ is mean slowly varying, and that $u$ is quasi-generic for some $S$-invariant measure $\nu$ on $\D^\N$ along a subsequence $(N_m)$.
Then, by~\eqref{eq:mean-sv}, we have
\[
 \EE_\nu\bigl[|Z_1-Z_0|\bigr] = \lim_{m\to\infty} \frac{1}{N_m}\sum_{n\le N_m}|u(n+1)-u(n)| = 0.
\]
It follows that $Z_1=Z_0$ $\nu$-a.e., and more generally by $S$-invariance, for each $k\in\N$, we also have $Z_{k+1}=Z_k$ $\nu$-a.e. Hence, $\nu$ is concentrated on the subset of sequences with identical coordinates, and $S=\Id$ $\nu$-a.e.

Conversely, assume that~\eqref{eq:mean-sv} fails. Then there exists a subsequence $(N_m)$ such that
\[
  \lim_{m\to\infty} \frac{1}{N_m}\sum_{n\le N_m}|u(n+1)-u(n)|  > 0,
\]
and by compactness of $M(\D^\N)$ we can assume that $u$ is quasi-generic for some $S$-invariant measure $\nu$ along $(N_m)$. But then we have
\[
 \EE_\nu\bigl[|Z_1-Z_0|\bigr] = \EE_\nu\bigl[|Z_0\circ S -Z_0|\bigr] > 0,
\]
and the Furstenberg system of $u$ defined by $\nu$ is not measure-theoretically isomorphic to the action of the identity.
\end{proof}

\begin{Prop}
 \label{prop:slowly}
 Let $u:\N\to\D$ be a slowly varying arithmetic function, and let $X_u\subset\D^\N$ be the subshift generated by $u$. Then the topological entropy of $(X_u,S)$ is zero.
\end{Prop}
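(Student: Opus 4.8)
The plan is to reduce the statement to the variational principle by showing that \emph{every} $S$-invariant probability measure on $X_u$ is concentrated on the constant sequences; since $S$ acts as the identity there, every such measure has zero Kolmogorov--Sinai entropy, and then $h_{\rm top}(X_u,S)=\sup_{\mu\in M(X_u,S)}h_\mu(X_u,S)=0$ by the variational principle already invoked in Remark~\ref{r:entropy}.

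The first step is a structural remark: every element of $X_u$ is again slowly varying. Points of the forward orbit $\{S^\ell u:\ell\ge0\}$ inherit this directly from $u$. For the remaining points of $X_u=\overline{\{S^\ell u:\ell\ge0\}}$, note that any such point is of the form $z=\lim_i S^{m_i}u$ with $m_i\to\infty$, and then for every coordinate $n$ one has
\[
 z(n+1)-z(n)=\lim_i\bigl(u(m_i+n+1)-u(m_i+n)\bigr)=0,
\]
because $m_i+n\to\infty$ and $u$ is slowly varying; hence $z$ is a constant sequence, in particular slowly varying. (So in fact $X_u$ is the forward orbit together with some constant sequences, but we only need that every element of $X_u$ is slowly varying.)

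Now fix $\mu\in M(X_u,S)$, which is nonempty by the Krylov--Bogolyubov theorem, and consider the continuous function $g(x):=|Z_1(x)-Z_0(x)|$ on $X_u$. For every $x\in X_u$ we have $g(S^n x)=|x(n+2)-x(n+1)|\to0$ as $n\to\infty$ by the previous step, and $0\le g\le 2$, so by dominated convergence $\int_{X_u}g\circ S^n\,d\mu\to 0$; but $\int g\circ S^n\,d\mu=\int g\,d\mu$ for all $n$ by $S$-invariance of $\mu$, hence $\int g\,d\mu=0$, i.e.\ $Z_1=Z_0$ $\mu$-a.e. Applying $S^k$ gives $Z_{k+1}=Z_k$ $\mu$-a.e.\ for each $k\ge0$, so $\mu$ is carried by the set of constant sequences. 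On that set $S$ is the identity, so $(X_u,\mu,S)$ is, up to measure isomorphism, the identity on a probability space, whence $h_\mu(X_u,S)=0$. Since $\mu$ was arbitrary, the variational principle yields $h_{\rm top}(X_u,S)=0$.

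I do not expect a genuine obstacle; the only point needing care is the structural step, where it is essential that along a \emph{subsequential} limit with $m_i\to\infty$ the orbit collapses to a constant. This is precisely the feature that fails for a generic point of the full shift (which is not slowly varying), and it explains why a crude count of $(n,\varepsilon)$-separated orbit segments does not by itself give zero entropy: it is the passage to invariant measures, together with the variational principle, that does the work.
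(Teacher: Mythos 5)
Your proof is correct, and it reaches the conclusion by a somewhat different mechanism than the paper. The paper first upgrades the pointwise slow variation of $u$ to the uniform averaged estimate~\eqref{eq:sv1}, deduces that every $w\in X_u$ is \emph{mean} slowly varying, and then, for each ergodic invariant measure $\nu$, picks a $\nu$-generic point and invokes Proposition~\ref{p:slowlyvar} to identify $(X_u,\nu,S)$ with the identity; the variational principle finishes. You instead prove the sharper structural fact that every point of $X_u$ is either on the forward orbit of $u$ or a constant sequence (this is exactly the content of Remark~\ref{r:slvar}, stated in the paper after the proof), and then handle \emph{all} invariant measures at once: since $g=|Z_1-Z_0|$ satisfies $g\circ S^n\to 0$ pointwise on $X_u$, dominated convergence plus $S$-invariance force $\int g\,d\mu=0$, so every $\mu\in M(X_u,S)$ sits on the fixed-point set and has zero entropy. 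This bypasses ergodic decomposition, generic points and Proposition~\ref{p:slowlyvar}, so it is more self-contained; the trade-off is that it uses the full pointwise slow variation, whereas the paper's route through~\eqref{eq:sv1} only needs the averaged window estimate, and this extra robustness is exploited later: in Section on Sarnak's conjecture the same proof is applied verbatim to the function $v$, which is not slowly varying (it has isolated jumps) but still satisfies~\eqref{eq:sv1}. Your argument would need a small additional remark (jumps separated by gaps tending to infinity) to cover that case, while the paper's formulation transfers without change.
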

\begin{proof}
 We start by observing that, as $u$ is slowly varying, for each $\varepsilon>0$ there exists $L_\varepsilon$ such that whenever $\ell\ge L_\varepsilon$,
 \begin{equation}
  \label{eq:sv1}
  \forall n\in\N,\ \frac{1}{\ell}\sum_{n\le j <n+\ell} \left| u(j+1)-u(j)\right| < \varepsilon.
 \end{equation}
Let $w$ be a sequence in $X_u$. As each subblock of $w$ is a limit of subblocks of $u$, it follows that~\eqref{eq:sv1} is still valid if we replace $u$ by $w$. But this in turn implies that any $w\in X_u$ is mean slowly varying.

Now, let $\nu$ be an ergodic shift-invariant measure on $X_u$, and
let $w\in X_u$ be $\nu$-generic ($\nu$-almost all sequences in
$X_u$ are $\nu$-generic). Then $w$ is mean slowly varying, and
by Proposition~\ref{p:slowlyvar}, the measure-theoretic system
$(X_u,\nu,S)$ is isomorphic to $(X_u,\nu,\Id)$. In particular,
its Kolmogorov entropy is zero.

By the variational principle (see e.g.\ \cite[Section 8.2]{walters},
the topological entropy of $(X_u,S)$ must be also zero.
\end{proof}

\begin{Remark}\label{r:slvar} As a matter of fact, the above
proof shows that whenever $u$ is a slowly varying function, any
$w\in X_u$ is also a slowly varying function. More
than that, we can make the following observation: by the definition of $X_u$, each $w\in X_u$
can be written as
\[
 w = \lim_{k\to\infty} S^{n_k}u,
\]
where $(n_k)$ is a non-decreasing sequence of non-negative integers. Either $(n_k)$ is bounded,
and then $w=S^nu$ for some $n\ge0$, or $n_k\to\infty$, and then as $u$ is slowly varying, $w$ must be
of the form $(w_1,w_1,w_1,\ldots)$ for some
$w_1\in\mathbb{D}$.\end{Remark}

In the following remark we provide some more observations on mean
slowly varying functions.

\begin{Remark}\label{r:moreMSV}(a)
Note that the subshift generated by a \emph{mean} slowly varying function
$u:\N\to\D$ can be of infinite entropy: we can modify a
slowly varying function on a subset of density zero to get a
mean slowly varying function generating the full subshift $(\D^\N,S)$.

(b) If $u$ is mean slowly varying and if, for each $\ell\geq1$, the limit
\[
\lim_{N\to\infty}\frac1{N}\sum_{1\leq n\leq N}u^{\ell}(n)
\]
exists then there is only one Furstenberg system of $u$: there exists a shift-invariant measure $\nu$ such that
$E_N(u)\tend{N}{\infty}\nu$.
Indeed, note that
for each $k\geq1$, $\ell_r\ge 1,j_r\ge 0$ for $r=1,\ldots,k$, we have
\[
 \left|\frac1N\sum_{1\leq n\leq N}u^{\ell_1}(n+j_1)\cdot\ldots\cdot u^{\ell_k}(n+j_k)-
 \frac1N\sum_{1\leq n\leq N}u^{\ell_1+\ldots+\ell_k}(n)\right|\tend{N}{\infty}0.
\]

(c) We can repeat word for word all arguments for the logarithmic
 averages.  In particular, if $u$ is mean slowly varying and, for each $\ell\geq1$, the limit
\beq\label{log26}
\lim_{N\to\infty}\frac1{\log N}\sum_{1\leq n\leq N}\frac1n u^{\ell}(n)
\eeq
exists, then there is only one logarithmic Furstenberg system (see Section~\ref{s:logChowla}):
there exists a shift-invariant measure $\nu$ such that
\[
\frac1{\log N}\sum_{1\leq n\leq N}\frac1n\delta_{S^n u}\tend{N}{\infty}\nu.
\]

(d) For $u(n)=n^{it}$, we have the logarithmic
assumption~\eqref{log26}  satisfied (in fact, each such limit is
zero whenever  $\ell\neq0$),
so there is only one logarithmic Furstenberg system,
as already noticed in \cite{Fr-Ho3} but we have uncountably
many Furstenberg systems (cf.\ Corollary~\ref{c:FSnit}).
\end{Remark}

As it follows from Proposition~\ref{p:wlasnosci}, a slowly
varying function $u:\N\to\D$ either has one Furstenberg system, or it has uncountably many
different Furstenberg systems. By different, we mean that
we obtain different measures. The measure-theoretic
dynamical systems given by these measures can however be all isomorphic
which is the case in the example below.

We consider now a mean slowly varying function $u$ such that $|u(n)|=1$ for all $n\geq1$.
Assume, moreover, that
\[
 \lim_{m\to\infty}\frac1{N_m}\sum_{0\leq n< N_m}\delta_{S^nu}=\lim_{m\to\infty}E_{N_m}(u)=\nu
\]
for an increasing sequence $(N_m)$.
It follows from Proposition~\ref{p:slowlyvar} that $\nu$ is supported on
the set $\{(z,z,\ldots):\:z\in\bs^1\}$.
Moreover, $Z_0^k(z,z,\ldots)=z^k$.
Hence, we can identify $\nu$ with a measure
$\kappa$ defined on $\bs^1$ such that
$$
\lim_{m\to\infty}\frac1{N_m}\sum_{1\leq n\leq N_m}u^k(n)=
\int Z_0^k\,d\nu=\int_{\bs^1}z^k\,d\kappa .$$
It follows that, varying $k\in\Z$, the LHS of the equation
above provides the Fourier transform of the measure
$\kappa$ which we are seeking.
In the particular case where $u(n)=n^{i}$,
we have
\[
 \frac1{N_m}\sum_{1\leq n\leq N_m} n^{ik}=\frac{N_m^{ik}}{1+ik}+{\rm o}(1),
\]
thus the Fourier coefficients of $\kappa$ are given by the limits of $\frac{N_m^{ki}}{1+ik}$ as $m\to\infty$.
The case $k=1$ yields that $N_m^i\to c$,
where $|c|=1$, and we obtain that
$$
\int_{\bs^1}z^k\,d\kappa = \frac{c^k}{1+ik},\;k\in\Z.$$
Consider, $\kappa':=\kappa\ast\delta_{\ov{c}}$. Then
$$
\int_{\bs^1}z^k\,d\kappa' =\frac1{1+ik},\;k\in\Z.$$
Since $(\frac1{1+ik})$ is an $\ell^2$- sequence,
the measure $\kappa'$ is absolutely continuous with respect to the (normalized) Lebesgue measure
$\Leb_{\bs^1}$,
with density $g$ equal to
$g(z)=\sum_{k=-\infty}^\infty\frac1{1+ik}z^k$.
In fact, noting that for all $k\in\Z$,
\[
 \int_0^1 e^{2\pi x}e^{i2\pi k x}dx = \frac{e^{2\pi}-1}{2\pi(1+ik)},
\]
we see that, for $x\in[0,1)$,
\[
g(e^{i2\pi x}) = \frac{2\pi e^{2\pi x}}{e^{2\pi}-1}.
\]
Finally, note that  the sequence $(n^i)$ is dense in $\bs^1$
as the sequence
$(\log n)$ is dense in $[0,2\pi)$ mod~$2\pi$.


\begin{Cor} \label{c:FSnit}The family of Furstenberg systems of
$u(n)=n^i$ consists of uncountably many different systems
given by all rotations of $g(z)dz$.
All of them are isomorphic to the identity on the circle
with Lebesgue measure
(and ergodic components are Dirac measures on the circle).
Moreover, $X_u=\{(z,z,\ldots):\: z\in\bs^1\}\cup\{S^nu:n\in\N\}$.
\end{Cor}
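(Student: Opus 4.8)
The plan is to assemble the facts established in the discussion preceding the corollary. Write $\Delta:=\{(z,z,\ldots):z\in\bs^1\}$ for the diagonal in $(\bs^1)^\N$, identified with $\bs^1$ via $z\mapsto(z,z,\ldots)$, and for $c\in\bs^1$ let $\kappa_c:=\delta_c\ast\kappa'$ be the rotation by $c$ of $\kappa'$, the absolutely continuous measure $g(z)\,dz$ from above; denote by $\nu_c$ the associated shift-invariant measure on $(\bs^1)^\N$, which is carried by $\Delta$. First I would pin down $V(u)$. Suppose $E_{N_m}(u)\to\nu$. By Proposition~\ref{p:slowlyvar}, $\nu$ is carried by $\Delta$, hence it is determined by $\kappa:=(Z_0)_\ast\nu$, and $\kappa$ in turn by its Fourier coefficients. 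Since $Z_0(S^nu)=u(n+1)=(n+1)^i$, for every $k\in\Z$ we have $\int Z_0^k\,dE_{N_m}(u)=\frac1{N_m}\sum_{1\le n\le N_m}n^{ik}=\frac{N_m^{ik}}{1+ik}+{\rm O}(1/N_m)$, so $E_{N_m}(u)\to\nu$ is equivalent to the convergence of $\frac{N_m^{ik}}{1+ik}$ for all $k\in\Z$. Taking $k=1$ forces $N_m^i\to c$ for a unique $c\in\bs^1$, whence $\int z^k\,d\kappa=c^k/(1+ik)$ for all $k$; by the computation before the corollary this says exactly $\kappa=\kappa_c$. So $V(u)\subseteq\{\nu_c:c\in\bs^1\}$.

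Next I would prove equality and read off the structure. Since $\log(n+1)-\log n\to0$ while $\log n\to\infty$, the sequence $(\log n\bmod 2\pi)_{n\ge1}$ is dense in $[0,2\pi)$; hence given $c=e^{i\theta}$ one can choose an increasing $(N_m)$ with $\log N_m\to\theta\bmod 2\pi$, i.e.\ $N_m^i\to c$, and then the moments $\frac{N_m^{ik}}{1+ik}$ converge for every $k$, so by the equivalence above $u$ is quasi-generic for $\nu_c$ along $(N_m)$. Thus $V(u)=\{\nu_c:c\in\bs^1\}$. Since $g$ is not constant this set has at least two elements, hence is uncountable by Proposition~\ref{p:wlasnosci}(ii); in fact $c\mapsto\nu_c$ is injective because $g(e^{i2\pi x})=\frac{2\pi e^{2\pi x}}{e^{2\pi}-1}$ is strictly monotone on $[0,1)$ and so has no nontrivial period, so $V(u)$ is precisely a circle of rotations of $g(z)\,dz$. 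The same formula shows $g>0$, so every $\kappa_c$ is a non-atomic Borel probability measure equivalent to $\Leb_{\bs^1}$; since $S=\Id$ $\nu_c$-a.e.\ (Proposition~\ref{p:slowlyvar}), the system $(X_u,\nu_c,S)$ is isomorphic to $(\bs^1,\kappa_c,\Id)$, and by the isomorphism theorem for non-atomic standard probability spaces all of these are isomorphic to $(\bs^1,\Leb_{\bs^1},\Id)$. Finally, the ergodic decomposition of an $\Id$-invariant measure is its disintegration over points, $\kappa_c=\int\delta_z\,d\kappa_c(z)$, so the ergodic components are the Dirac masses on $\bs^1$.

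It remains to identify $X_u$. The inclusion $\{S^nu:n\in\N\}\subseteq X_u$ is immediate, and the density statement above shows that every point of $\Delta$ is a limit of some $S^{n_k}u$ with $n_k\to\infty$, so $\Delta\subseteq X_u$. Conversely, if $w=\lim_kS^{n_k}u\in X_u$, then either $(n_k)$ has a bounded subsequence, and $w=S^nu$ for some $n$, or $n_k\to\infty$ and the slow variation of $u$ forces all coordinates of $w$ to coincide, so $w\in\Delta$. Moreover $\Delta$ is compact and any convergent sequence drawn from $\{S^nu:n\in\N\}$ with unbounded indices has its limit in $\Delta$, so $\Delta\cup\{S^nu:n\in\N\}$ is closed; being squeezed between the orbit of $u$ and $X_u$, it equals $X_u$. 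The only step that needs genuine care is the equivalence used at the start — that convergence of the empirical measures is equivalent to, not merely implies, convergence of all moments $\frac1{N_m}\sum_{n\le N_m}n^{ik}$, so that $c$ really parametrizes $V(u)$; this rests on $u$ being mean slowly varying, whence a quasi-generic measure is carried by $\Delta$ and determined by its one-dimensional marginal (cf.\ Remark~\ref{r:moreMSV}(b)). Everything else is bookkeeping on top of the computations already carried out before the corollary.
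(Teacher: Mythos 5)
Your proposal is correct and follows essentially the same route as the paper: Proposition~\ref{p:slowlyvar} forces any quasi-generic limit onto the diagonal, the moment computation $\frac1{N_m}\sum_{n\le N_m}n^{ik}=\frac{N_m^{ik}}{1+ik}+{\rm o}(1)$ together with Remark~\ref{r:moreMSV}(b) identifies the marginal as a rotation of $g(z)\,dz$, density of $(\log n)$ mod $2\pi$ realizes every rotation, and the dichotomy of Remark~\ref{r:slvar} gives $X_u$. You merely spell out details the paper leaves implicit (injectivity of $c\mapsto\nu_c$, the isomorphism with $(\bs^1,\Leb_{\bs^1},\Id)$), so no substantive difference.
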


%

\subsection{MRT arithmetic functions do not satisfy Sarnak's conjecture}

As MRT arithmetic functions mostly behave like Archimedean
characters on very large intervals, we will use some
ideas presented in the preceding section to prove that
MRT arithmetic functions do not satisfy Sarnak's conjecture:
for any MRT function $\bfu$ there exists a zero entropy
topological system which outputs a sequence having some
correlation with $\bfu$.

Let $\bfu$ be an MRT arithmetic function, and let $(t_m)$ and
$(s_m)$ be as in Definition~\ref{def:MRT}. We set, for each
$m\in\N$,  $r_m:=\lfloor s_{m}^{3/2}\rfloor$. Then we define a
new arithmetic function $v=\bigl(v(n)\bigr)_{n\in\N}\in(\bs^1)^\N$
by setting for each $m$:
\[
 v(n) := \begin{cases}
            1 &\text{ for } t_m < n \le r_{m+1},\\
           n^{is_{m+1}} &\text{ for }
           r_{m+1} < n \le t_{m+1}.
        \end{cases}
\]
Then, for each $m$, we have
\[
 \left| v(n+1) - v(n) \right| =
 \begin{cases}
  0 &\text{ if }t_m < n < r_{m+1},\\
  {\rm O}\left(s_{m+1}^{-1/2}\right)&
  \text{ if } r_{m+1} < n < t_{m+1}.
 \end{cases}
\]
Even though $v$ may not be slowly varying because of the possible
jumps in $t_m$ and in $r_{m+1}$,
the property described in Equation~\eqref{eq:sv1} is still valid,
as these jumps are bounded by 2 and are separated by gaps whose lengths
tend to $\infty$. Therefore,
the proof of Proposition~\ref{prop:slowly} also applies to $v$,
and we have $h_{\text{top}}(X_v,S)=0$.

But, since $\frac{r_{m+1}}{t_{m+1}}\tend{m}{\infty}0$,
in view of
Proposition~\ref{prop:MRT}, we have
\[
 \frac{1}{t_{m+1}} \sum_{1\le n\le t_{m+1}} \bfu(n) \overline{v(n)} \tend{m}{\infty} 1.
\]
We thus have found a topological dynamical system $(X_v,S)$ of zero topological entropy, a point $v\in X_v$ and a continuous map $f:X_v\to\mathbb{C}$
(the conjugate of the zero-coordinate map), such that the sequence $\bigl(f(S^nv)\bigr)$ is not orthogonal to $\bfu$, in the sense that
\[
 \frac{1}{N} \sum_{1\le n\le N} \bfu(n) f(S^n v) \longrightarrow\hspace{-13pt}/\hspace{13pt} 0 \text{ as }N\to\infty.
\]

\section{Further properties of MRT functions} \label{s:rozne}

\subsection{MRT functions satisfy logarithmic
Chowla conjecture along a subsequence} \label{s:logChowla}

The purpose of this section is to study what the Main Theorem becomes if we consider logarithmic averages instead of
usual averages. A \emph{logarithmic Furstenberg system} of an arithmetic function $u:\N\to\D$ is defined as in Section~\ref{s:Furstenberg}
as a measure-theoretic dynamical system $(X_u,\nu,S)$, where $\nu$ is now a weak$^\ast$ limit of a subsequence of the
\emph{logarithmic} empirical measures
\[
   E_{N}^{\log}(u):=\frac{1}{L_{N}} \sum_{1\le n\le N}\frac{1}{n} \delta_{S^{n-1}u},
\]
with  $L_N:=1+\frac{1}{2}+\cdots+\frac{1}{N}$ ($N\ge1$).

We recall the classical relation between the logarithmic and usual averages, obtained by summation by parts:
\begin{equation}
 \label{eq:sum_by_parts}
 E_{N}^{\log}(u) = \frac{1}{L_{N}} \sum_{1\le n\le N-1}\frac{1}{n+1} E_n(u) + \frac{1}{L_{N}} E_{N}(u).
\end{equation}

We recall that the weak$^\ast$-topology turns
$M((\bs^1)^{\N})$ into a  compact metrizable space, and that a possible metric is given by
\[
 \Delta(\nu,\kappa)=\sum_{j\geq 1}\frac1{2^j}
\left|\int f_j\,d\nu-\int f_j\,d\kappa\right|,
\]
where $(f_j)_{j\geq1}$ is a countable family generating a dense subspace of $C((\bs^1)^{\N})$.
We can for example take for $(f_j)_{j\geq1}$
 the family of
$\{Z_{i_1}^{r_1}\cdot\ldots\cdot Z_{i_t}^{r_t}:\:
t\geq 1,i_k\ge0,r_k\in\Z\}$
taken in any order. Then, the metric is globally bounded by~2.
Note also that for convex combinations, we have
\beq\label{log1}
\Delta\left(\int\nu_\gamma\,dP(\gamma),\int \kappa_\gamma\,dP(\gamma)\right)\leq
\int \Delta(\nu_\gamma,\kappa_\gamma)\,dP(\gamma).
\eeq
We need the following simple observation.

\begin{Lemma}\label{l:log1}
Assume that $\mu_k\to\mu$ and let, for $d\ge1$, $\kappa_d:=\sum_{k=d}^{d+a_d}
\alpha_k^{(d)}\mu_k+\beta^{(d)}\rho_d$, where $\alpha^{(d)}_k, \beta^{(d)}\geq 0$,
$\sum_{k=d}^{d+a_d}\alpha_k^{(d)}=1-\beta^{(d)}$. If $\beta^{(d)}\to0$ then
$\kappa_d\to\mu$.\end{Lemma}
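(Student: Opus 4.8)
The plan is to estimate $\Delta(\kappa_d,\mu)$ directly using the metric $\Delta$ introduced just above the lemma, exploiting the convexity bound~\eqref{log1} and the fact that $\Delta$ is globally bounded by~$2$. First I would write, using that $\kappa_d$ is the convex combination $\sum_{k=d}^{d+a_d}\alpha_k^{(d)}\mu_k+\beta^{(d)}\rho_d$ and that $\mu$ is trivially the same convex combination of copies of itself,
\[
 \Delta(\kappa_d,\mu)\le \sum_{k=d}^{d+a_d}\alpha_k^{(d)}\,\Delta(\mu_k,\mu)+\beta^{(d)}\,\Delta(\rho_d,\mu)
 \le \sup_{k\ge d}\Delta(\mu_k,\mu)+2\beta^{(d)},
\]
where in the last step I bounded each $\Delta(\mu_k,\mu)$ with $k\ge d$ by the supremum over $k\ge d$ (legitimate since $\sum_k\alpha_k^{(d)}\le 1$) and used $\Delta(\rho_d,\mu)\le 2$ together with $\beta^{(d)}\le 1$.

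Now the two terms go to zero as $d\to\infty$: the second because $\beta^{(d)}\to0$ by hypothesis, and the first because $\mu_k\to\mu$ in the weak$^\ast$-topology, which with the metric $\Delta$ means $\Delta(\mu_k,\mu)\to0$, hence $\sup_{k\ge d}\Delta(\mu_k,\mu)\to0$ as $d\to\infty$ (the tail supremum of a sequence converging to $0$ converges to $0$). Therefore $\Delta(\kappa_d,\mu)\to0$, i.e.\ $\kappa_d\to\mu$ in the weak$^\ast$-topology, which is exactly the claim.

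There is essentially no obstacle here; the only point requiring a word of care is that the indices of the $\mu_k$'s appearing in $\kappa_d$ all lie in $[d,d+a_d]$, so they are at least $d$, which is what lets us replace the individual terms by a tail supremum that vanishes. The possibly unbounded length $a_d$ of the block is harmless precisely because of this: no matter how long the block is, every index in it is $\ge d$. I would present this as a short three-line computation followed by the two limit observations.
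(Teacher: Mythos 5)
Your argument is correct and follows exactly the route the paper intends: its proof is the one-line remark that the lemma ``follows immediately from~\eqref{log1}'', and your estimate $\Delta(\kappa_d,\mu)\le\sup_{k\ge d}\Delta(\mu_k,\mu)+2\beta^{(d)}$, using the convexity bound~\eqref{log1}, the global bound $\Delta\le 2$, and the fact that all indices in the block are at least $d$, is just the spelled-out version of that remark.
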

\begin{proof} This follows immediately from~\eqref{log1}.\end{proof}

Now, we consider an MRT arithmetic function $\bfu$, and we take $(t_m)$ and $(s_m)$ as in Definition~\ref{def:MRT}.
The proof of  Main Theorem (see Section~\ref{s:dowodMRT})
yields the following:

\begin{Lemma}\label{l:log2}
Fix $d\geq0$ and choose
\[
 \begin{cases}
  \frac1{d+1}<\beta_d<\beta'_d<\frac1d&\text{ if }d\ge1,\\
  1<\beta_0<\beta'_0<2&\text{ if }d=0.
 \end{cases}
\]
Let $\vep>0$. Then, for each $m$ large enough,
\[
\Delta\left(
E_N(\bfu),\nu_d\right)<\vep
\]
uniformly in $s_{m+1}^{\beta_d}\leq N\leq s_{m+1}^{\beta'_d}$.
\end{Lemma}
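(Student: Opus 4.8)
The plan is to upgrade the pointwise convergence statements of Proposition~\ref{prop:final} to a uniform statement over the whole window $s_{m+1}^{\beta_d}\le N\le s_{m+1}^{\beta'_d}$, and then translate this into closeness of the empirical measures $E_N(\bfu)$ to $\nu_d$ in the metric $\Delta$. First I would recall that, by the metric description of weak$^*$ convergence recalled just above the lemma, it suffices to control finitely many of the test functions $f_j=Z_{i_1}^{r_1}\cdots Z_{i_t}^{r_t}$ (the tail of the series $\sum 2^{-j}|\cdots|$ is automatically $<\vep/2$ once $j$ is large, since $\Delta$ is bounded by~$2$). So fix $J$ with $\sum_{j>J}2^{-j+1}<\vep/2$; it is enough to show that for each $j\le J$,
\[
\left|\int f_j\,dE_N(\bfu)-\int f_j\,d\nu_d\right|
= \left|\frac1N\sum_{1\le n\le N} f_j\bigl(S^{n-1}\bfu\bigr) - \EE_{\nu_d}[f_j]\right|
\]
is $<\vep/2$ for all $N$ in the window, once $m$ is large enough.

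Next I would handle each such $f_j$. Since under $\nu_d$ the first $d+1$ coordinates are independent and uniform while $Y^{(d+1)}=1$, the value $\EE_{\nu_d}[f_j]$ is $0$ unless $f_j$ reduces, modulo the relation $Y^{(d+1)}=1$, to a power of $Y^{(d)}$ with exponent $0$ — i.e. a constant. Concretely, any monomial in the $Z_i$'s can be rewritten via~\eqref{eq:ZfromX} as $e^{i2\pi(c_0X_0+\cdots+c_DX_D)}$ for integers $c_i$; if some $c_i$ with $i\le d$ is nonzero then the integral against $\nu_d$ vanishes, and on the arithmetic side Proposition~\ref{prop:MRT} lets me replace $\bfu(n+k)$ by $(n+k)^{is_{m+1}}$ (up to $\mathrm{o}(1)$, uniformly since $N\le t_{m+1}$), after which the sum becomes $\frac1N\sum e^{i\ell s_{m+1}f_d(n)}$-type expressions for the relevant $\ell\ne0$, exactly as in~\eqref{eq:exp}; Lemma~\ref{lemma:fd} and the Kusmin–Landau bound (Theorem~\ref{t:KuLa}) give an $\mathrm{O}(s_{m+1}^{\beta' d-1})$ bound that is \emph{uniform} in $N\in[s_{m+1}^{\beta_d},s_{m+1}^{\beta'_d}]$ because the estimate only used the length and location of the summation range. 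If instead all $c_i$ with $i\le d$ vanish but $f_j$ still involves a genuine power of $Y^{(d+1)},Y^{(d+2)},\ldots$, the same substitution plus the $f_{d+1}$ asymptotic of Lemma~\ref{lemma:fd} shows the average tends to $1=\EE_{\nu_d}[f_j]$, again uniformly (the exponent $s_{m+1}f_{d+1}(n)$ is $\mathrm{O}(s_{m+1}^{-\alpha(d+1)+1})\to0$ for every $n$ in a slightly enlarged window $[s_{m+1}^\alpha, s_{m+1}^{\beta'_d}]$, and the initial segment $n\le s_{m+1}^\alpha$ is negligible). Summing the at most $J$ contributions, each $<\vep/(2J)$ for $m$ large, gives $\Delta(E_N(\bfu),\nu_d)<\vep$ uniformly.

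The main obstacle is making precise that ``uniform in $N$'' claim: Proposition~\ref{prop:final} is phrased for a single sequence $N_m=\lfloor s_{m+1}^\beta\rfloor$, so I must re-read its proof and check that every estimate — the passage to the window $[s_{m+1}^\alpha,s_{m+1}^\beta]$, the Kusmin–Landau application, and the trivial bound on the removed initial segment — depends on $N$ only through crude monotone bounds on the endpoints, hence survives when $\beta$ ranges over $[\beta_d,\beta'_d]$. This is the case because $f_d'$ is eventually monotone (Lemma~\ref{lemma:fd}) on an interval containing the entire family of windows for $m$ large, and $\|\ell s_{m+1}f_d'(n)\|\gtrsim s_{m+1}/n^{d+1}\gtrsim s_{m+1}^{1-\beta'_d(d+1)}$ uniformly there; so the Kusmin–Landau bound $\mathrm{O}(s_{m+1}^{\beta'_d(d+1)-1})$ divided by $N\ge s_{m+1}^{\beta_d}$ still goes to $0$. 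The rest is the bookkeeping of reducing monomials modulo $Y^{(d+1)}=1$, which I would package by the remark (used already in Remark~\ref{rem:nud}) that the algebra generated by the $Z_n$ coincides, under $\nu_d$, with that generated by $Z_0,\ldots,Z_d$, on which $\nu_d$ is the uniform product measure.
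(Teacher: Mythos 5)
Your overall plan—test against finitely many monomials and make the estimates of Proposition~\ref{prop:final} uniform over the window—runs into a genuine gap at the step where you claim that every test monomial $f_j$ with $\EE_{\nu_d}[f_j]=0$ leads, after the replacement of Proposition~\ref{prop:MRT}, to ``$\frac1N\sum_n e^{i\ell s_{m+1}f_d(n)}$-type expressions, exactly as in~\eqref{eq:exp}''. That reduction is only valid for the special monomials $\ph{d}^\ell(Z_0,\dots,Z_d)$ and $\ph{d+1}$. A general monomial equals $\prod_k (\Yg{k})^{c_k}$ by~\eqref{eq:ZfromX}, and after substitution its phase is $s_{m+1}\sum_k c_k f_k(n)$; if the smallest index $k_0$ with $c_{k_0}\neq 0$ satisfies $k_0<d$ (e.g.\ $f_j=Z_0$, or $Z_1/Z_0$ when $d\ge2$), the derivative of the phase is of order $s_{m+1}n^{-(k_0+1)}$, which is unbounded (indeed tends to infinity) on the window $[s_{m+1}^{\alpha},s_{m+1}^{\beta'_d}]$ when $d\ge 2$. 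Then the hypothesis $\|f'\|\ge\lambda_1$ of the Kusmin--Landau Theorem~\ref{t:KuLa} fails, and the required cancellation (e.g.\ for $\frac1N\sum_{n\le N}n^{is_{m+1}}$ with $N\asymp s_{m+1}^{\beta}$, $\beta<1/2$) needs genuinely stronger exponential-sum tools (higher van der Corput derivative tests), which neither you nor the paper invoke. The paper sidesteps all such monomials: it only ever estimates the two families $\ph{d}^\ell$ and $\ph{d+1}$ (for which the phase derivative is $o(1)$ on the window) and then lets Proposition~\ref{prop:key}, via Proposition~\ref{prop:nud} and the unique ergodicity of the unipotent system, force all remaining correlations. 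Your plan drops this soft step and therefore cannot be completed with the stated tools; the final sentence about ``reducing monomials modulo $\Yg{d+1}=1$'' only computes $\EE_{\nu_d}[f_j]$ and does not help on the arithmetic side, where no such relation holds for finite $N$.

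There is also a quantitative slip in the Kusmin--Landau bookkeeping even in the good case $k_0=d$: bounding the sum over the whole window by ${\rm O}\bigl(s_{m+1}^{\beta'_d(d+1)-1}\bigr)$ and dividing by $N\ge s_{m+1}^{\beta_d}$ gives exponent $\beta'_d(d+1)-1-\beta_d$, which need not be negative (take $d=1$, $\beta_1=0.51$, $\beta'_1=0.99$). The correct uniform estimate applies Kusmin--Landau on $[s_{m+1}^{\alpha},N]$ with $\lambda_1\gtrsim s_{m+1}/N^{d+1}$, giving ${\rm O}(N^{d+1}/s_{m+1})$, hence after division ${\rm O}(N^{d}/s_{m+1})\le {\rm O}(s_{m+1}^{\beta'_d d-1})\to0$; this is fixable, but it shows the uniformity is not quite the ``only the endpoints matter'' statement you assert. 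For comparison, the paper proves the lemma by contradiction in two lines: if it failed, one could choose $N_m\in[s_{m+1}^{\beta_d},s_{m+1}^{\beta'_d}]$ with $\Delta(E_{N_m}(\bfu),\nu_d)\ge\vep$ for infinitely many $m$, and the proof of Proposition~\ref{prop:final} (which works for any sequence in the window) together with~\eqref{eq:exp} and Proposition~\ref{prop:key} forces $E_{N_m}(\bfu)\to\nu_d$, a contradiction—no uniform monomial-by-monomial estimates are needed.
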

\begin{proof}
Assume the result does not hold. Then for infinitely many integers $m$ we can find $s_{m+1}^{\beta_d}\leq N_m\leq s_{m+1}^{\beta'_d}$
such that
\[
\Delta\left(
E_{N_m}(\bfu),\nu_d\right)\ge \vep.
\]
On the other hand, the proof of Proposition~\ref{prop:final} shows that, along such a sequence $(N_m)$ we must have
\[
E_{N_m}(\bfu) \to \nu_d,
\]
which yields a contradiction.
\end{proof}

Now, let us fix integers $1\le D_1<D_2$. We consider the convergence of logarithmic empirical measures of $\bfu$ along the increasing sequence of integers
$(N_m):=\left( \left\lfloor  s_{m+1}^{1/D_1}  \right\rfloor\right)$. Given a small real number $\varepsilon>0$, set for each $D_1\le d\le D_2$
\[
   \beta_d:= \frac{1}{d+1}+ \frac{\varepsilon}{2d(d+1)} \quad \text{ and } \quad \beta'_d:= \frac{1}{d} - \frac{\varepsilon}{2d(d+1)}.
\]
Using~\eqref{eq:sum_by_parts}, we write $E_{N_m}^{\log}(\bfu)$ as a convex combination of the empirical measures $E_n(\bfu)$, $1\le n\le N$.
We partition $\{1,\ldots,N_m\}$ as $\bigsqcup_{D_1\le d\le D_2} I_d^m\sqcup J^m$, where
\[
 I_d^m := \left\{n\in\N: s_{m+1}^{\beta_d} \le n\le s_{m+1}^{\beta'_d}\right\},
\]
and
\[
   J^m := \{1,\ldots,N_m\} \setminus \bigsqcup_{D_1\le d\le D_2} I_d^m.
\]
By Lemma~\ref{l:log2}, for $m$ large, $E_n(\bfu)$ is close to $\nu_d$ for $n\in I_d^m$. The weight of $I_d^m$ in the convex combination is
\[
   \frac{1}{L_{N_m}} \sum_{n\in I_d^m} \frac{1}{n+1} \equival{m}{\infty} D_1 (\beta'_d-\beta_d) = D_1 \left( \frac{1}{d} - \frac{1}{d+1} \right)(1-\varepsilon).
\]
It follows that the total weight of $\bigsqcup_{D_1\le d\le D_2} I_d^m$ is asymptotic, as $m\to\infty$, to
\[
   D_1 \left( \frac{1}{D_1} - \frac{1}{D_2} \right)(1-\varepsilon) > 1-\varepsilon - \frac{D_1}{D_2},
\]
and then for $m$ large enough the weight of $J^m$ is bounded by $\varepsilon+\frac{D_1}{D_2}$.
In view of Lemma~\ref{l:log2} and~\eqref{log1}, any weak$^\ast$ limit of $E_{N_m}^{\log}(\bfu)$ can be written as
\[
 D_1(1-\varepsilon) \sum_{D_1\le d \le D_2} \left( \frac{1}{d} - \frac{1}{d+1} \right) \nu_d + \alpha\rho,
\]
where $\rho$ is some shift-invariant probability measure on $X_u$ and $0\leq\alpha\le \varepsilon+\frac{D_1}{D_2}$.
Letting $\varepsilon\to 0$ and $D_2\to\infty$, we see (cf. Lemmma~\ref{l:log1}) that for any $D_1\ge1$ there exists a logarithmic Furstenberg system of $\bfu$ whose invariant measure is
\[
   D_1 \sum_{d\ge D_1} \left( \frac{1}{d} - \frac{1}{d+1} \right) \nu_d.
\]
Note that, under this measure, the distribution of $\left(Z_0,\ldots,Z_{D_1}\right)$ is
$\left(\Leb_{\bs^1}\right)^{\otimes (D_1+1)}$.
By Proposition~\ref{c:rou10} (which is also valid in the logarithmic case), we can also find $\left(\Leb_{\bs^1}\right)^{\otimes \N}$ as an invariant measure of a logarithmic Furstenberg system of $\bfu$.

Thus, we have proved the following result:

\begin{Cor} Each $\bfu\in \MRT$ satisfies the logarithmic Chowla
conjecture along a subsequence.\end{Cor}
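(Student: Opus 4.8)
The plan is to follow the scheme already established for the non-logarithmic Main Theorem (Section~\ref{s:dowodMRT}), but to feed the outputs through the averaging device of Lemma~\ref{l:log1} so as to first realise each $\nu_d$ as a \emph{logarithmic} Furstenberg system and then assemble a logarithmic Furstenberg system under which arbitrarily many consecutive coordinates are i.i.d.\ uniform. Concretely: fix $D_1\ge1$ and run the computation of Section~\ref{s:dowodMRT} along the sequence $N_m:=\lfloor s_{m+1}^{1/D_1}\rfloor$, using the windows $I_d^m=\{n:\,s_{m+1}^{\beta_d}\le n\le s_{m+1}^{\beta'_d}\}$ for $D_1\le d\le D_2$ with $\beta_d,\beta'_d$ as defined just above the statement. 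By Lemma~\ref{l:log2}, for $m$ large the ordinary empirical measures $E_n(\bfu)$ are uniformly $\varepsilon$-close to $\nu_d$ when $n\in I_d^m$; summation by parts~\eqref{eq:sum_by_parts} then exhibits $E_{N_m}^{\log}(\bfu)$ as a convex combination of these $E_n(\bfu)$'s, with the computed weights. Passing to a weak$^\ast$ limit, using~\eqref{log1} and Lemma~\ref{l:log1} to control the residual mass $\alpha\rho$, then letting $\varepsilon\to0$ and $D_2\to\infty$, yields that
\[
   \kappa_{D_1}:=D_1\sum_{d\ge D_1}\Bigl(\frac1d-\frac1{d+1}\Bigr)\nu_d
\]
is the invariant measure of a logarithmic Furstenberg system of $\bfu$. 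This is exactly the construction laid out immediately before the corollary statement.

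The second step is to observe that, under $\kappa_{D_1}$, the distribution of $(Z_0,\ldots,Z_{D_1})$ is $(\Leb_{\bs^1})^{\otimes(D_1+1)}$: each $\nu_d$ with $d\ge D_1$ makes the first $D_1+1$ coordinates i.i.d.\ uniform (Remark~\ref{rem:nud}), and the property is preserved under the convex combination since it only constrains a finite marginal. Hence the hypotheses of Proposition~\ref{c:rou10} are met with $\kappa=\Leb_{\bs^1}$ and $\nu_{D_1}$ replaced by $\kappa_{D_1}$; one needs only to note, as the paragraph before the corollary does, that Proposition~\ref{c:rou10} holds verbatim in the logarithmic setting (its proof is a soft compactness-plus-continuity argument, with $V(u)$ replaced by the set of logarithmic quasi-generic measures, which is still weak$^\ast$-closed). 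We conclude that $(\Leb_{\bs^1})^{\otimes\N}$ is the invariant measure of a logarithmic Furstenberg system of $\bfu$, which is precisely the assertion that the analog of Chowla's conjecture holds for $\bfu$ along a subsequence of logarithmic averages — i.e.\ $\bfu$ satisfies the logarithmic Chowla conjecture along a subsequence.

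The only genuinely delicate point is the \emph{uniformity} in Lemma~\ref{l:log2}: we need $E_n(\bfu)$ to be close to $\nu_d$ not just along one sequence but uniformly over the whole dyadic-type range $s_{m+1}^{\beta_d}\le n\le s_{m+1}^{\beta'_d}$, since summation by parts mixes all scales. This is handled by the contradiction argument already given in the proof of Lemma~\ref{l:log2}: were uniformity to fail, one could extract a bad sequence $(N_m)$ inside the range, but then the exponential-sum estimates in the proof of Proposition~\ref{prop:final} (Kusmin--Landau, Theorem~\ref{t:KuLa}, applied via the asymptotics of $f_d$ from Lemma~\ref{lemma:fd}) force $E_{N_m}(\bfu)\to\nu_d$ along that very sequence, a contradiction. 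Everything else is bookkeeping with the weights: the mass of $\bigsqcup_d I_d^m$ is asymptotically $D_1(1/D_1-1/D_2)(1-\varepsilon)$, so the leftover mass on $J^m$ is $\le\varepsilon+D_1/D_2\to0$, and Lemma~\ref{l:log1} absorbs it. No new estimate beyond what is already in Sections~\ref{s:proof} and~\ref{s:logChowla} is required.
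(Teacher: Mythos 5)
Your proposal is correct and follows essentially the same route as the paper: Lemma~\ref{l:log2} plus summation by parts~\eqref{eq:sum_by_parts} to express $E_{N_m}^{\log}(\bfu)$ as a convex combination concentrated on the windows $I_d^m$, the weight bookkeeping with $\varepsilon\to0$, $D_2\to\infty$ and Lemma~\ref{l:log1} to obtain $D_1\sum_{d\ge D_1}\bigl(\frac1d-\frac1{d+1}\bigr)\nu_d$ as a logarithmic Furstenberg system, and then the logarithmic version of Proposition~\ref{c:rou10} to pass to $\left(\Leb_{\bs^1}\right)^{\otimes\N}$. Your remarks on the uniformity in Lemma~\ref{l:log2} and on the $(D_1+1)$-dimensional marginal under the convex combination match the paper's argument, so nothing further is needed.
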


\subsubsection*{Getting $\nu_0$}
Under the assumptions given in Definition~\ref{def:MRT}, we can easily modify the above argument to incorporate $\nu_0$ in the weak$^\ast$ limit of $E_{N_m}^{\log}(\bfu)$: we just have to take $D_1=0$ and choose $\beta_0:=1+\varepsilon$, $\beta'_0:=2-\varepsilon$. The weak$^\ast$ limit we get has then the form
\[
 \frac{1}{2} \left(\nu_0 + \sum_{d\ge1} \left(\frac{1}{d} - \frac{1}{d+1}\right) \nu_d \right).
\]
If in the construction of the MRT sequence we add the extra assumption that
\[
    s_{m+1}^{a_m} \le t_{m+1},\quad\text{ with }a_m\to\infty,
\]
which is compatible with the rest, then we can now take $\beta'_0$ as large as we want, so that the weight of $\nu_0$ in the weak$^\ast$ limit is as close to 1 as we want. Finally, we get a logarithmic Furstenberg system of $\bfu$ with $\nu_0$ as an invariant measure.

It is not clear however if we can get $\nu_d$ ($d\ge1$) for some logarithmic Furstenberg system of an MRT function.

%
%

\subsection{Absence of zero mean on typical short interval}
Motivated by Matom\"aki-Radziwi\l\l's theorem \cite{Ma-Ra} concerning strongly aperiodic multiplicative functions, we say that $\bfu:\N\to\D$
has {\em zero mean on typical short interval} if
$$
\lim_{M,H\to\infty, H={\rm o}(M)}\frac1M \sum_{1\leq m\leq M}\left|
\frac1H\sum_{0\leq h<H}u(m+h)\right|=0.$$

\begin{Prop}\label{p:si}
Assume that $\liminf_{\ell\to\infty}
\frac1\ell\sum_{1\leq j\leq \ell}|u(j)|
=:\alpha>0$.
If $u$ has identity as a Furstenberg system then $u$ has no
zero mean on short intervals.\end{Prop}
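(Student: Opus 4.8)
The plan is to argue by contraposition: I will assume that $u$ has zero mean on typical short intervals and deduce that the identity cannot be a Furstenberg system of $u$, which will contradict the hypothesis (together with $\alpha>0$). The bridge between the short-interval average and a Furstenberg system is the standard observation that for fixed $H$,
\[
 \frac1M\sum_{1\le m\le M}\Bigl|\frac1H\sum_{0\le h<H}u(m+h)\Bigr|
 = \frac1M\sum_{1\le m\le M} g_H(S^m u) + {\rm o}(1),
\]
where $g_H:\D^\N\to[0,1]$ is the continuous function $g_H(y):=\bigl|\frac1H\sum_{0\le h<H}y(h)\bigr|$. Hence if $\nu$ is any Furstenberg system of $u$, obtained along a subsequence $(N_k)$, then $\int g_H\,d\nu=\lim_k \frac1{N_k}\sum_{m\le N_k} g_H(S^m u)$, and this quantity is, up to the iterated-limit behaviour of the short-interval average, controlled by the hypothesis.

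Next I would make this precise. If $u$ has zero mean on typical short intervals, then for every $\eps>0$ there is $H_0$ such that for all $H\ge H_0$ and all $M$ large enough, $\frac1M\sum_{m\le M} g_H(S^m u)<\eps$; letting $M=N_k\to\infty$ gives $\int g_H\,d\nu\le\eps$ for every Furstenberg system $\nu$ and every $H\ge H_0$. Now suppose toward a contradiction that the identity is a Furstenberg system of $u$, i.e.\ there is $\nu\in V(u)$ supported (by Proposition~\ref{p:slowlyvar}, essentially) on sequences with constant coordinates: $Z_0=Z_1=\cdots=Z_{H-1}$ $\nu$-a.e. Then $g_H(y)=|Z_0(y)|$ $\nu$-a.e., so $\int g_H\,d\nu=\EE_\nu|Z_0|$, which does not depend on $H$. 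Combined with the previous paragraph, $\EE_\nu|Z_0|\le\eps$ for every $\eps>0$, hence $\EE_\nu|Z_0|=0$, i.e.\ $Z_0=0$ $\nu$-a.e.

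Finally I would contradict this using the density hypothesis on $|u|$. Since $\EE_\nu|Z_0|=\lim_k\frac1{N_k}\sum_{n\le N_k}|u(n)|\ge\liminf_{\ell}\frac1\ell\sum_{j\le\ell}|u(j)|=\alpha>0$, we have $\EE_\nu|Z_0|\ge\alpha>0$, contradicting $\EE_\nu|Z_0|=0$. Therefore the identity is not a Furstenberg system of $u$, which is the contrapositive of the statement. The only mildly delicate point — the ``main obstacle'' — is the first displayed approximation: one must check that replacing the sliding average $\frac1M\sum_{m\le M}|\frac1H\sum_{0\le h<H}u(m+h)|$ by $\frac1M\sum_{m\le M}g_H(S^m u)$ costs only ${\rm o}(1)$ as $M\to\infty$ with $H$ fixed, which is immediate since the two expressions differ only by boundary terms involving $O(H)$ indices near $M$, and to make sure the iterated-limit order in the definition of ``zero mean on typical short interval'' ($M,H\to\infty$ with $H={\rm o}(M)$) is strong enough to extract, for each fixed large $H$, the needed bound as $M\to\infty$; this is a routine unpacking of the definition.
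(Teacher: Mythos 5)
Your proposal is correct and follows essentially the same route as the paper's own proof: assume zero mean on typical short intervals, note that an identity Furstenberg system forces $Z_0\circ S^h=Z_0$ a.e.\ so the short-interval functional integrates to $\EE_\nu|Z_0|$, and contradict this with the lower bound $\EE_\nu|Z_0|\ge\alpha>0$ coming from the hypothesis on $|u|$. The only cosmetic difference is that you make the boundary-term and iterated-limit bookkeeping explicit, which the paper passes over silently.
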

\begin{proof} Suppose that $u$ has zero mean on short intervals.
Assume that
$$\lim_{k\to\infty}E_{M_k}(u)=\kappa.$$ Now, the system $(S,X_u,\kappa)$ is the identity if
and only if $Z_0=Z_0\circ S$ $\kappa$-a.e.

Let $\vep>0$ and choose $H_0$ so that for $H>H_0$, we have
$$
\limsup_{k\to\infty}\frac1{M_k} \sum_{1\leq m\leq M_k}\left|
\frac1H\sum_{0\leq h<H}u(m+h)\right|<\vep.$$
Take $H>H_0$. Then
$$
\limsup_{k\to\infty}\frac1{M_k} \sum_{1\leq m\leq M_k}\left|
\frac1H\sum_{0\leq h<H} u(m+h)\right|=$$
$$
\limsup_{k\to\infty}\frac1{M_k} \sum_{1\leq m\leq M_k}\left|
\frac1H\sum_{0\leq h<H} Z_0(S^{h+m}u)\right|=
$$
$$
\limsup_{k\to\infty}\int_{X_u}\left|\frac1H\sum_{0\leq h<H}
Z_0\circ S^h\right|\,d E_{M_k}(u)=
$$ $$
\int_{X_u}\left|\frac1H\sum_{0\leq h<H}
Z_0\circ S^h\right|\,d\kappa=\int_{X_u}|Z_0|\,d\kappa\geq\alpha>0,$$
a contradiction.
\end{proof}

\begin{Cor} If $\bfu\in\MRT$ then $\bfu$ has no zero mean on short intervals.
\end{Cor}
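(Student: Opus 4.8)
The plan is to apply Proposition~\ref{p:si} to $\bfu\in\MRT$. There are exactly two hypotheses to verify. First, the lower-density condition $\liminf_{\ell\to\infty}\frac1\ell\sum_{1\le j\le\ell}|\bfu(j)|=:\alpha>0$. This is immediate: since $\bfu$ takes values in $\bs^1$, we have $|\bfu(j)|=1$ for every $j$, so the averages are identically $1$ and $\alpha=1>0$. Second, we must exhibit a Furstenberg system of $\bfu$ that is (measure-theoretically isomorphic to) the action of the identity. This is precisely the content of the Main Theorem, taking $d=0$: the system $\left((\bs^1)^\N,\nu_0,S\right)$ is a Furstenberg system of $\bfu$, and under $\nu_0$ we have $\Yg{1}=Z_1/Z_0=1$ a.s., which says exactly that $Z_0\circ S=Z_0$ $\nu_0$-a.e.; equivalently $\nu_0$ is carried by the diagonal $\{(z,z,z,\ldots):z\in\bs^1\}$ and $S=\Id$ $\nu_0$-a.e. (Alternatively, one invokes the case $d=0$ of Proposition~\ref{prop:final} combined with Proposition~\ref{prop:key}, which yields $\nu_0$ along the subsequence $N_m=\lfloor s_{m+1}^\beta\rfloor$ with $1<\beta<2$.) Thus both hypotheses of Proposition~\ref{p:si} hold, and we conclude that $\bfu$ has no zero mean on typical short intervals.

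Slightly more explicitly, one may reproduce the mechanism inside the proof of Proposition~\ref{p:si} with this particular $\nu_0$: along the subsequence $(N_m)$ realizing $\nu_0$, for any fixed $H$ one has
\[
\limsup_{m\to\infty}\frac1{N_m}\sum_{1\le n\le N_m}\Bigl|\frac1H\sum_{0\le h<H}\bfu(n+h)\Bigr|
=\int_{X_{\bfu}}\Bigl|\frac1H\sum_{0\le h<H}Z_0\circ S^h\Bigr|\,d\nu_0
=\int_{X_{\bfu}}|Z_0|\,d\nu_0=1,
\]
where the middle equality uses weak$^\ast$ convergence $E_{N_m}(\bfu)\to\nu_0$ together with the continuity of the integrand, and the last equality uses $Z_0\circ S^h=Z_0$ $\nu_0$-a.e. together with $|Z_0|=1$ on $\bs^1$. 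Since $H$ was arbitrary, the short-interval average cannot tend to $0$.

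There is no real obstacle here; the statement is a direct corollary of tools already assembled. The only point requiring a moment's care is that Proposition~\ref{p:si} is phrased for functions $u:\N\to\D$ with a \emph{positive} lower density of modulus, and one should note that for $\bs^1$-valued $\bfu$ this is automatic (the degenerate case $\alpha=0$, which could occur for functions vanishing on a density-one set such as the M\"obius-type examples with $|\bfu|=\mob^2$, does not arise in the MRT class of Definition~\ref{def:MRT}). Beyond that, it is purely a matter of quoting the $d=0$ instance of the Main Theorem to supply the required identity Furstenberg system.
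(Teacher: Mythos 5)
Your proposal is correct and follows exactly the route the paper intends: the corollary is a direct application of Proposition~\ref{p:si}, whose hypotheses you verify correctly ($|\bfu|\equiv 1$ gives $\alpha=1$, and the $d=0$ case of the Main Theorem supplies $\nu_0$, under which $\Yg{1}=1$ a.s., so the Furstenberg system is the identity on the circle with Lebesgue measure). The explicit re-derivation of the mechanism inside Proposition~\ref{p:si} is sound but not needed beyond citing that proposition.
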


\subsection{Strong stationarity}
We now show that, for each $d\geq0$, the stationary process
$(Z_n)$ with distribution $\nu_d$ is {\em strongly stationary} (see \cite{Je}), i.e.
for each $s_1<\ldots<s_k$ and each $r\geq1$, the distributions
of the vectors $(Z_{s_1},\ldots,Z_{s_k})$ and
$(Z_{rs_1},\ldots,Z_{rs_k})$ are the same.

Fix $d\geq0$ and we recall that the process $(Z_n)$ with the
distribution $\nu_d$ is the same as of the process $(f\circ T_d^n)$,
see~\eqref{tor2} in~Appendix for the definition of $T_d$ and $f(x_1,\ldots,x_d)=e^{i2\pi  x_d}$. Then
$$
f\circ T_d^s(x_1,\ldots,x_d)=e^{2\pi i(
{s\choose d-1}x_1+{s\choose d-2}x_2+\ldots+{s\choose 0}x_d)}.
$$
 It follows that for each $s_1<\ldots<s_k$ and each choice of
integers $j_1,\ldots,j_k$, we have
$$\mathbb{E}
Z_{s_1}^{j_1}\cdot\ldots\cdot Z_{s_k}^{j_k}\neq 0 \Leftrightarrow$$
$$
\sum_{i=1}^k j_i{s_i\choose\ell}=0\text{ for }\ell=0,\ldots,d-1
\Leftrightarrow
\sum_{i=1}^ks_i^{\ell}=0\text{ for }\ell=0,\ldots,d-1.$$
Therefore,
$$
\mathbb{E}
Z_{s_1}^{j_1}\cdot\ldots\cdot Z_{s_k}^{j_k}=0 \Leftrightarrow
\mathbb{E}
Z_{rs_1}^{j_1}\cdot\ldots\cdot Z_{rs_k}^{j_k}=0 \text{ for each }r\geq1.
$$
Moreover,
$\mathbb{E}
Z_{s_1}^{j_1}\cdot\ldots\cdot Z_{s_k}^{j_k}\neq0$ is equivalent to
$\mathbb{E}
Z_{s_1}^{j_1}\cdot\ldots\cdot Z_{s_k}^{j_k}=1$. It follows that
the process $(Z_n)$ with the distribution $\nu_d$ is strongly
stationary.

It follows that all Furstenberg systems which have been found in
the paper are given by stationary processes
which are strongly stationary. Note also that since every convex
combination of strongly stationary processes remains strongly
stationary, also the logarithmic Furstenberg systems determined in
Section~\ref{s:logChowla} are strongly stationary
which fits perfectly to a general result
of Frantzikinakis and Host \cite{Fr-Ho2}, \cite{Fr-Ho3} about logarithmic Furstenberg systems of
strongly aperiodic multiplicative functions.

\medskip

We end by asking the following question, which has been suggested to us by both Nikos Frantzikinakis and Florian Richter: can we find a Furstenberg system of some $\bfu\in \MRT$ which is the direct product of a Bernoulli shift and a unipotent system?

\section{Appendix}\label{s:quasi-discrete}
We aim at showing that from the dynamical point of view there is a close relation between the processes $(Y^{(d)})_{d\geq0}$ which appeared in Section~\ref{s:proof} and the concept of quasi-eigenfunction in ergodic theory.

\subsection{Algebraic constraints  and quasi-eigenfunctions}
 Given a  standard Borel probability space $\xbm$, let $\mathcal{M}(X)$ denote the set of all measurable functions of modulus~1 defined on $X$. Endowed with the pointwise multiplication and the $L^2$-topology, it becomes a Polish group.
Given an automorphism $T$ on $\xbm$, set
$$E_0(T):=\{g\in \mathcal{M}(X):\: g\circ T=g\}$$
and then inductively, define
$$
E_{d+1}(T):=\{f\in\mathcal{M}(X) :\: f\circ T/f\in E_d(T)\},\;d\geq0.$$
Note that each $E_d(T)$ is a group which is called the group of $d$-quasi-eigenfunctions. For $w\in \mathcal{M}(X)$ and $m\geq1$, we set $$w^{(m)}=w\circ T^{m-1}\cdot\ldots\cdot w$$ with $w^{(0)}=1$. Assume that $g\in E_d(T)$ and let $g\circ T=hg$ with $h=g\circ T/g\in E_{d-1}(T)$. Then, for each $\ell_1,\ldots,\ell_k$, we have
$$
\prod_{i=1}^kg\circ T^{\ell_i}=\prod_{i=1}^k
h^{(\ell_i)}g =g^k\prod_{i=1}^kh^{(\ell_i)}=$$$$
g^k\prod_{i=1}^k\left(\prod_{j=0}^{\ell_i-1}h\circ T^j\right)=
g^k\prod_{i=1}^k\prod_{j=0}^{\ell_i-1}\left(\frac{h\circ  T}{h}\right)^{(j)}h=$$$$
g^k\prod_{i=1}^k h^{\ell_i}\prod_{j=0}^{\ell_i-1}\left(\frac{h\circ T}{h}\right)^{(j)}=$$$$g^kh^{\sum_{i=1}^k\ell_i}
\prod_{i=1}^k\prod_{j=0}^{\ell_i-1}\left(\frac{h\circ T}{h}\right)\circ T^{j-1}\cdot\ldots\cdot \left(\frac{h\circ T}{h}\right)=$$$$
g^kh^{\sum_{i=1}^k\ell_i}
\prod_{i=1}^k\prod_{j=0}^{\ell_i-1}\prod_{p=0}^{j-1}\left(\frac{(h\circ T/h)\circ T}{h\circ T/h}\right)^{(p)}\frac{h\circ T}{h}=$$
$$
g^kh^{\sum_{i=1}^k\ell_i}
\prod_{i=1}^k\prod_{j=0}^{\ell_i-1}\left(\frac{h\circ T}{h}\right)^j\prod_{p=0}^{j-1}\left(\frac{(h\circ T/h)\circ T}{h\circ T/h}\right)^{(p)}=$$$$
g^kh^{\sum_{i=1}^k\ell_i}\left(\frac{h\circ T}{h}\right)^{\sum_{i}^k\frac{\ell_i(\ell_i-1)}{2}}
\prod_{i=1}^k\prod_{j=0}^{\ell_i-1}\prod_{p=0}^{j-1}\left(\frac{(h\circ T/h)\circ T}{h\circ T/h}\right)^{(p)}=\ldots$$
It follows that for the stationary process $(g\circ T^n)_{n\in\N}$ the following holds: whenever $\ell_i,\ell'_i$ satisfy $\sum_{i=1}^k\ell^j_i=\sum_{i=1}^k\ell^{\prime j}_i$ for $j=0,1,\ldots,d$, we have
\beq\label{tor1}
\prod_{i=1}^kg\circ T^{\ell_i}=\prod_{i=1}^k g\circ T^{\ell'_i}\eeq
provided that $g\in E_d(T)$.

In fact, the processes given by quasi-eigenfunctions are the only satisfying the algebraic relation~\eqref{tor1}.

\begin{Prop}\label{p:rou1} Assume that $(Z_n)$ is a stationary $\bs^1$-valued process. Then for each $d\geq0$, $Z_0\in E_d(T)$ if and only if
\beq\label{rou30a}
Z_{\ell_1}\cdot\ldots\cdot Z_{\ell_k}=
Z_{\ell'_1}\cdot\ldots\cdot Z_{\ell'_k}
\eeq
for each $\ell_i,\ell'_i$ for which $\sum_{i=1}^k\ell^j_i=\sum_{i=1}^k\ell^{\prime j}_i$ for $j=0,1,\ldots,d$ assuming that \beq\label{rou30}
\{\ell_1,\ldots,\ell_l\}\cap\{\ell'_1,\ldots,\ell'_k\}=\emptyset.\eeq
\end{Prop}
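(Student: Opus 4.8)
The plan is to prove Proposition~\ref{p:rou1} by induction on $d\ge0$, using the identity between quasi-eigenfunctions and power-sum constraints that was worked out in the computation preceding the statement. First I would dispose of the base case $d=0$: here $E_0(T)$ consists of the $T$-invariant functions, and the condition $\sum_i\ell_i^0=\sum_i\ell_i^{\prime 0}$ just says $k=k$ on both sides (same length), so~\eqref{rou30a} reads $Z_0^{k}=Z_0^{k}$ after using stationarity and $Z_0\circ T=Z_0$; conversely, taking $k=1$, $\ell_1=1$, $\ell_1'=0$ (disjoint!) in~\eqref{rou30a} gives $Z_1=Z_0$, i.e.\ $Z_0$ is $T$-invariant. (One has to check that the disjointness hypothesis~\eqref{rou30} does not obstruct producing enough instances; allowing the multiset on one side to be shifted away from the other is exactly what makes the converse work.)

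For the inductive step, assume the equivalence holds for $d-1$ and that $Z_0=g$ generates a stationary $\bs^1$-valued process. The forward direction ($\Rightarrow$) is essentially the displayed calculation already in the text: if $g\in E_d(T)$ with $h:=g\circ T/g\in E_{d-1}(T)$, then $\prod_i g\circ T^{\ell_i}$ was rewritten as $g^k\,h^{\sum\ell_i}\,(\text{term involving }h\circ T/h)^{\sum\binom{\ell_i}{2}}\cdots$, and continuing this expansion one sees that $\prod_i g\circ T^{\ell_i}$ depends on $(\ell_1,\dots,\ell_k)$ only through the power sums $\sum_i\ell_i^j$ for $j=0,1,\dots,d$ (the $j=0$ sum being $k$, which is why both sides must have the same length). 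Hence whenever those $d+1$ power sums agree, the products agree, giving~\eqref{rou30a}. I would make this precise by an auxiliary induction showing that the iterated ``difference operator'' $h\mapsto h\circ T/h$ applied $j$ times produces, in the expansion of $\prod_i g\circ T^{\ell_i}$, an exponent that is a fixed integer linear combination of $\sum_i\binom{\ell_i}{0},\dots,\sum_i\binom{\ell_i}{j}$, and that $\{\binom{\cdot}{0},\dots,\binom{\cdot}{d}\}$ and $\{(\cdot)^0,\dots,(\cdot)^d\}$ span the same space of polynomials of degree $\le d$, so that equality of power sums up to degree $d$ is equivalent to equality of the binomial sums up to degree $d$.

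For the reverse direction ($\Leftarrow$), suppose~\eqref{rou30a} holds for all configurations with matching power sums up to degree $d$ and disjoint supports. The goal is to show $g\circ T/g\in E_{d-1}(T)$, i.e.\ (by the inductive hypothesis applied to the process $(h\circ T^n)$ with $h=g\circ T/g=Z_1/Z_0$) that $\prod_i h\circ T^{m_i}=\prod_i h\circ T^{m_i'}$ whenever $\sum_i m_i^j=\sum_i m_i^{\prime j}$ for $j\le d-1$, under a disjointness assumption. The point is that each $h\circ T^{m}=Z_{m+1}/Z_m=Z_{m+1}Z_m^{-1}$, so a product $\prod_i h\circ T^{m_i}$ is a monomial $\prod_n Z_n^{c_n}$ whose exponent vector $(c_n)$ has the property that $\sum_n c_n n^j$ equals a fixed linear combination of $\sum_i(m_i+1)^j-\sum_i m_i^j=\sum_i\big((m_i+1)^j-m_i^j\big)$, and these are polynomials in $m_i$ of degree $j-1$; thus matching the $m$-power sums up to degree $d-1$ forces matching of the $Z$-exponent power sums up to degree $d$, and also $\sum_n c_n=0$. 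Then I would invoke~\eqref{rou30a} (after arranging disjoint supports by a harmless integer shift, using stationarity, as in the base case) to conclude that the two monomials in the $Z_n$'s are equal, hence $\prod_i h\circ T^{m_i}=\prod_i h\circ T^{m_i'}$, which by the inductive hypothesis gives $h\in E_{d-1}(T)$, i.e.\ $g\in E_d(T)$.

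The main obstacle I anticipate is bookkeeping around the disjointness condition~\eqref{rou30}: in both directions one wants to apply the hypothesis to configurations that may a priori overlap, and one needs to verify that translating one of the two index multisets by a large integer (which is legitimate by stationarity and changes the power sums in a controlled, matching way only after one also adjusts the other side) lets one reduce to the disjoint case without losing information. A secondary technical point is making the ``linear change of basis between $\binom{n}{j}$ and $n^j$'' argument clean and checking that all the exponent combinations that arise are genuinely \emph{integer} combinations so that the monomial identities make sense on $\mathcal{M}(X)$; this is routine but should be stated carefully. Everything else is the algebra already displayed before the statement, reorganised as an induction.
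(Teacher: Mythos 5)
Your proof is correct and essentially coincides with the paper's: the same induction on $d$, with the converse step reducing the relation for $h=Z_1/Z_0$ at level $d-1$ to an instance of~\eqref{rou30a} for $Z_0$ at level $d$ (the multisets $\{m_i+1\}_i\cup\{m'_i\}_i$ and $\{m'_i+1\}_i\cup\{m_i\}_i$ have matching power sums up to degree $d$ because $(m+1)^j-m^j$ has degree $j-1$), and the forward direction being exactly the telescoping computation preceding the statement, made rigorous via the binomial sums $\sum_i\binom{\ell_i}{j}$. One small correction: do not try to achieve the disjointness condition~\eqref{rou30} by ``shifting one multiset using stationarity'' (an a.e.\ identity cannot be translated on one side only); the paper simply cancels common factors, which preserves the matching of power sums and shows~\eqref{rou30} is superfluous, and in your exponent-vector formulation the positive and negative parts of $c_n-c'_n$ are automatically supported on disjoint sets, so no shift is needed.
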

\begin{proof}
We can assume that $Z_n=Z_0\circ T^n$ for an automorphism $T$ of $\xbm$.  Now, we proceed by induction. Since the first condition ($d=0$) is just
$Z_0\circ T^{\ell_1}\cdot \ldots\cdot \circ Z_0\circ T^{\ell_k}=Z_0\circ T^{\ell'_1}\cdot\ldots\cdot Z_0\circ T^{\ell'_k}$ whichever  numbers we take, we have (by taking $k=1$, $\ell_1=1$ and $\ell'_1=0$) $Z_0\circ T=Z_0$, so $Z_0\in E_0(T)$.

Notice that condition~\eqref{rou30} is superfluous since if  some of the numbers $\ell_i$ are equal to $\ell'_j$, then we can just cancel out the relevant factors.
So, assume now that $Z_0$ satisfies~\eqref{rou30a} and let  $f=Z_0\circ T/Z_0$. We need to prove that $f$ satisfies the relation~\eqref{rou30a} up to $d$ (if so, then by the induction assumption $f\in E_d(T)$, whence $Z_0\in E_{d+1}(T)$). That is, we want to prove that
$$
f\circ T^{\ell_1}\cdot\ldots\cdot f\circ T^{\ell_k}=
f\circ T^{\ell'_1}\cdot\ldots\cdot f\circ T^{\ell'_k}$$
provided that $\sum_{i=1}^k\ell_i^j=\sum_{i=1}^k\ell_i^{\prime j}$ for $j=0,1,\ldots, d$. Equivalently, we want to show that
$$
Z_0\circ T^{\ell_1+1}\cdot \ldots\cdot Z_0\circ T^{\ell_k+1}\cdot
Z_0\circ T^{\ell'_1}\cdot\ldots\cdot Z_0\circ T^{\ell'_k}=$$
$$
Z_0\circ T^{\ell'_1+1}\cdot \ldots\cdot Z_0\circ T^{\ell'_k+1}
\cdot Z_0\circ T^{\ell_1}\cdot\ldots\cdot Z_0\circ T^{\ell_k}.$$
But we clearly have
$$
(\ell_1+1)^{d+1}+\ldots+(\ell_k+1)^{d+1}+\ell_1^{\prime d+1}+\ldots+ \ell_k^{\prime d+1}=$$$$
(\ell'_1+1)^{d+1}+\ldots+(\ell'_k+1)^{d+1}+\ell_1^{ d+1}+\ldots+ \ell_k^{d+1}$$
(let alone if we replace $d+1$ by a smaller $j$), hence the result.
\end{proof}

\begin{Remark} Assume additionally that $T$ is totally ergodic ({\em i.e.} all non-zero powers are ergodic). Then, following \cite{Ab}, $T$ has {\em quasi-discrete spectrum} if $\ov{\rm span}\left(\bigcup_{d\geq0}E_d(T)\right)=L^2\xbm$  (sometimes, $T$ is called an Abramov automorphism). Quasi-discrete spectrum automorphisms are  basically affine automorphisms of compact, Abelian, metric groups, see \cite{Ha-Pa} for more details.\end{Remark}

\begin{Remark} Knowing that $Z_0\in E_d(T)$ does not determine the whole process $(Z_m)_{m\in\N}$. Indeed, for example, if we take the ergodic decomposition of $T$, then the process $(Z_m)$ (considered with respect to an ergodic component) will still satisfy the same algebraic relation even though the distribution of the process may have changed.\end{Remark}

\subsection{Algebraic constraints and $n$-independence}
We will be interested in the (very) non-ergodic case.
Given $d\geq1$, consider the (unipotent) automorphism $T_d:\T^d\to \T^d$, defined by:
\beq\label{tor2}
T_d(x_1,\ldots, x_d):=(x_1,x_1+x_2,\ldots, x_{d-1}+x_d)\eeq
(hence $T_1$ is just the identity on $\T$). Set $f(x_1,\ldots,x_d)=e^{i2\pi  x_d}$. Then, $\big(f\circ T/f\big)(x_1,\ldots,x_d)=e^{i2\pi  x_{d-1}}$, so by induction, it is easy to show that $f\in E_d(T_d)$.

Denote by ${\rm Erg}^\perp$ the class of automorphisms disjoint from all ergodic automorphisms.
\begin{Prop}\label{p:tor1}
For each $d\geq1$, $T_d\in {\rm Erg}^{\perp}$. Moreover, the stationary process $(f\circ T_d^m)_{m\in\N}$ is $d$-independent, i.e. for all $m_1<\ldots<m_d$ the variables $f\circ T_d^{m_1},\ldots, f\circ T_d^{m_d}$ are independent.\end{Prop}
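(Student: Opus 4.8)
The plan is to establish the two assertions separately, using the explicit structure of the unipotent map $T_d$ and the affine-cocycle description of its orbits. \emph{First I would prove $d$-independence.} Fix $m_1<\ldots<m_d$ and compute $f\circ T_d^{m}$ explicitly: since $T_d$ is unipotent, $T_d^m(x_1,\ldots,x_d) = \bigl(x_1,\, m x_1 + x_2,\, \ldots,\, \sum_{j=0}^{d-1}\binom{m}{j}x_{d-j}\bigr)$, whence
\[
 f\circ T_d^m(x_1,\ldots,x_d) = \exp\!\Bigl(2\pi i\sum_{j=0}^{d-1}\tbinom{m}{j}\,x_{d-j}\Bigr).
\]
So the vector $\bigl(f\circ T_d^{m_1},\ldots,f\circ T_d^{m_d}\bigr)$ is the image of $(x_1,\ldots,x_d)$ under the linear map on $\T^d$ given by the matrix $M=\bigl(\binom{m_i}{d-\ell}\bigr)_{1\le i,\ell\le d}$ (followed by coordinatewise $e^{2\pi i\,\cdot}$). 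By the variational/Vandermonde-type identity already used in the strong-stationarity section, $\det M = c\cdot\prod_{i<j}(m_j-m_i)$ for a nonzero constant $c$, so $M$ is invertible over $\QQ$; pushing forward $(\Leb_\T)^{\otimes d}$ under an invertible integer matrix on $\T^d$ gives $(\Leb_\T)^{\otimes d}$ again, which is exactly the statement that $f\circ T_d^{m_1},\ldots,f\circ T_d^{m_d}$ are independent and uniform on $\bs^1$. (Equivalently, one checks directly that $\mathbb{E}\prod_i (f\circ T_d^{m_i})^{j_i}=0$ unless $\sum_i j_i\binom{m_i}{\ell}=0$ for all $\ell=0,\ldots,d-1$, and that the only integer solution of this triangular-Vandermonde system with $d$ distinct nodes is $j_1=\cdots=j_d=0$.)

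\emph{Next I would prove $T_d\in{\rm Erg}^\perp$.} The clean way is induction on $d$ via the skew-product structure: $T_d$ fibers over $T_{d-1}$ (acting on the first $d-1$ coordinates) with $\T$-fibers, the action on the fiber being the translation by $x_{d-1}$; and $T_1=\mathrm{Id}_\T$. One shows that a skew product of an ${\rm Erg}^\perp$-system with a compact-group translation cocycle stays in ${\rm Erg}^\perp$: any joining of $T_d$ with an ergodic system $R$ projects to a joining of $T_{d-1}$ with $R$, which by the inductive hypothesis is the product joining, and then a Fubini/relative-disjointness argument over the group fibers (the fiber cocycle being independent of the $R$-coordinate and the ergodic averages of $R$ decoupling from the group rotation) forces the joining with $T_d$ to be the product as well. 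The base case $T_1=\mathrm{Id}$ is disjoint from every ergodic system because a joining of the identity with an ergodic system is a graph of an invariant, hence a.e.\ constant, map on the identity side. Alternatively, and perhaps more in the spirit of the paper, I would note that $T_d$ has a single nontrivial rational eigenvalue structure with all ergodic components being (affine extensions of) rotations that share the \emph{same} rotation parameter on a full measure set only in a measure-zero way—more precisely, the ergodic components of $(\T^d,\Leb^{\otimes d},T_d)$ are indexed by the value $\alpha=x_1$, and for $\alpha$ irrational the component is uniquely ergodic with a purely quasi-discrete (hence rigid, zero-entropy, and Kronecker-factor-trivial-in-the-ergodic-sense) spectrum; a measure with \emph{no} nontrivial ergodic component in common with any ergodic system is in ${\rm Erg}^\perp$, and one checks that the disintegration over $x_1$ has ergodic components with spectrum $\{e^{2\pi i k\alpha}\}$-generated in a way that varies with $\alpha$, so no fixed ergodic system can be a common factor.

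\emph{The main obstacle} is the ${\rm Erg}^\perp$ claim, specifically making the inductive step on joinings fully rigorous: one must argue that a self-joining of $T_d$ with an arbitrary ergodic $R$ disintegrates correctly over the $T_{d-1}$-part and that the $\T$-valued cocycle $x_{d-1}$, which is a function of the $T_{d-1}$-coordinates alone and so is ``independent'' of $R$, cannot create any correlation with $R$—this is where one invokes that translations on compact groups are disjoint from weakly mixing systems and, combined with the inductive hypothesis, that the only possible correlations with $R$ are through the (trivial, by induction) $T_{d-1}$-factor. I expect the routine identities (the binomial/Vandermonde determinant, the explicit form of $T_d^m$) to be immediate, so the write-up effort concentrates entirely on this disjointness bookkeeping.
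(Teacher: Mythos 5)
Your treatment of the $d$-independence claim is correct and is essentially the paper's own argument: the explicit binomial formula for $T_d^m$, the reduction of the relevant character integrals to the linear system $\sum_i j_i\binom{m_i}{\ell}=0$ for $\ell=0,\ldots,d-1$, and the Vandermonde determinant with distinct nodes; your variant via pushing forward $(\Leb_{\T})^{\otimes d}$ under an integer matrix with nonzero determinant is a harmless repackaging of the same computation.

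The gap is in the ${\rm Erg}^\perp$ part. The inductive lemma your main route rests on --- ``a skew product of an ${\rm Erg}^\perp$ system by a compact-group translation cocycle is again in ${\rm Erg}^\perp$'' --- is false: take the base $(\T,\Leb_{\T},\Id)$ and the \emph{constant} cocycle $\alpha$ irrational; the skew product is $\Id\times R_\alpha$, which is not disjoint from the ergodic rotation $R_\alpha$ (consider the joining which is diagonal over the $R_\alpha$-coordinate). So the induction cannot be run at that level of generality; what saves $T_d$ is precisely that the rotation number of the fiber action varies (with Lebesgue-distributed, hence continuously distributed, values) across the ergodic components, and invoking disjointness of rotations from weakly mixing systems does not help since the ergodic system $R$ is arbitrary. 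The paper's proof uses exactly this: for irrational $x$ the ergodic component $T_{d,x}$ is an ergodic compact group extension of the rotation by $x$, so if an ergodic $S$ were non-disjoint from $T_{d,x}$, the Koopman operators would share a nontrivial eigenvalue (Glasner, Ch.~6); since $U_S$ has only countably many eigenvalues and $x$ is Lebesgue-distributed, $S$ is disjoint from almost every component; then, given an arbitrary joining $\rho$ of $T_d$ with $S$, its ergodic decomposition projects to the (unique) ergodic decomposition of $\Leb_{\T^d}$, and componentwise disjointness forces $\rho$ to be the product measure. Your second, alternative sketch gestures at the right eigenvalue-counting idea, but it conflates ``no common factor'' with disjointness and omits the two steps that actually carry the proof: that non-disjointness from an ergodic group extension of an irrational rotation yields a common eigenvalue, and that a.e.-componentwise disjointness must be upgraded, via the decomposition of joinings, to disjointness of the non-ergodic system $T_d$ itself. (A minor slip: the Kronecker factor of a component $T_{d,\alpha}$ is the rotation by $\alpha$, not trivial.)
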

\begin{proof}
To prove that $T_d\in{\rm Erg}^\perp$, we first notice that its ergodic decomposition is given by the  tori (on which we consider the relevant Lebesgue measure) $\T^{d-1}_x=\T^{d-1}$ with $x\in\T$ irrational (and we consider Lebesgue measure\footnote{That is, we identify the space of ergodic components with $(\T,\Leb_{\T})$.} on $\T$) on which the action
$T_{d,x}$ is given by
$$
(x_2,x_3\ldots,x_d)\mapsto (x_2+x,x_2+x_3,\ldots, x_{d-1}+x_d).$$
If an ergodic automorphism $S$ acting on $\ycn$ is non-disjoint with $T_{d,x}$ then since $T_{d,x}$ is an ergodic compact group extension of an irrational rotation,  the associated Koopman operator $U_S$ must share a common eigenvalue with the Koopman operator $U_{T_{d,x}}$ given by $T_{d,x}$ (cf.\ \cite{Gl}, Chapter~6). As $U_S$ can have only countably many eigenvalues and the measure on the space of ergodic components is continuous, we can assume that $S$ is disjoint with all ergodic components. Take any joining $\rho$ of $T_d$ and $S$. Let
$$
\rho=\int\rho_\gamma\,dP(\gamma)$$
be its ergodic decomposition. Then, for $P$-a.e.~$\gamma$, the projection $\rho_\gamma|_Y$ of $\rho_\gamma$ on $Y$ equals $\nu$ since $\nu$ is ergodic. Moreover,
\beq\label{tor4}
\Leb_{\T^{\ot d}}=\rho|_{\T^d}=\int \rho_\gamma|_{\T^d}\,dP(\gamma)
\eeq
is a decomposition of $\Leb_{\T^{\ot d}}$ into ergodic measures. By the uniqueness of ergodic decomposition, it is \eqref{tor4} which is the ergodic decomposition, and (by disjointness) we obtain that
$$
\rho=\int_{\T}(\delta_x\ot\left(\Leb_{\T}\right)^{\ot (d-1)})\ot \nu\,d\Leb_{\T}(x)$$
and the first claim easily follows.

It is not hard to check that by elementary properties of Pascal triangle:
\beq\label{tor3a}
T_d^r(x_1,\ldots, x_d)=\Big(x_1,\ldots, {r\choose d-1}x_1+{r\choose d-2}x_2+\ldots+{r\choose 0}x_d\Big).\eeq
Now, choose any integers $r_1<r_2<\ldots r_d$ and $q_j\in\Z$ for $j=1,\ldots,d$. We want to show that the distribution of the vector $(f^{q_1}\circ T_d^{r_1}, \ldots, f^{q_d}\circ T_d^{r_d})$ is $\left(\Leb_{\T}\right)^{\otimes d}$ and for that we need to check that
\beq\label{tor3}\mathbb{E}(f^{q_1}\circ T_d^{r_1}\cdot \ldots\cdot f^{q_d}\circ T_d^{r_d})=0\eeq
unless $q_1=\ldots=q_d=0$. Now, in view of~\eqref{tor3a}, the negation of \eqref{tor3} is equivalent to
$$
\sum_{j=1}^dq_j{ r_j\choose k}=0\text{ for }k=0,1,\ldots,d-1.$$
It is not hard to see that this system of linear equations is equivalent to
$$
\sum_{j=1}^d q_jr_j^k=0\text{ for }k=0,1,\ldots,d-1.$$
However, the determinant here is the Vandermonde determinant and since the numbers $r_j$ are pairwise different, our claim follows.
\end{proof}

\begin{Remark} By the proof of Proposition~\ref{prop:nud} (see also Remark~\ref{rem:nud}), under the assumptions of Proposition~\ref{prop:nud}, we obtain that the dynamical system corresponding to the stationary process $(Z_n)$ is (up to isomorphism) just $T_d$.
\end{Remark}

\begin{Remark}
If $T$ acting on $\xbm$ is additionally ergodic then $E_0(T)$ consists of the constants while $E_1(T)$ consists of the eigenfunctions of the Koopman operator $U_T$ acting on $L^2\xbm$. Hence, if $(Z_n)_{n\in\N}$ is a stationary process satisfying \beq\label{tor10}Z_0\cdot Z_2=Z_1^2\eeq then, using also Proposition~\ref{p:rou1}, the ergodic components of the corresponding dynamical system must have discrete spectra.\footnote{Note that by Proposition~\ref{p:rou1} it follows that $Z_1=cZ_0$, whence \eqref{tor2} is satisfied whenever $\sum_{i=1}^k\ell_i=\sum_{i=1}^k\ell'_i$.} A prominent example of such a situation is the automorphisms $T_2(x_1,x_2)=(x_1,x_1+x_2)$ on $\T^2$ whose ergodic components are (all) irrational rotations (and $Z_0$ is given by $f_2:(x_1,x_2)\mapsto e^{i2\pi x_2}$).
\end{Remark}

\vspace{2ex}

\noindent
{\bf Acknowledgments}
Research of the first and second authors supported by Narodowe Centrum Nauki grant UMO-2019/33/B/ST1/00364.
The second and third authors would like to thank the American Institute of Mathematics
for hosting a workshop {\em Sarnak's Conjecture}, where this work was first discussed.

\normalsize

\vspace{2ex}
\noindent
Faculty of Mathematics and Computer Science\\
Nicolaus Copernicus University, Toru\'n, Poland\\
gomilko@mat.umk.pl, mlem@mat.umk.pl\\
Laboratoire de Math\'ematiques Rapha\"el Salem, Universit\'e de Rouen Normandie\\ CNRS -- Avenue de l'Universit\'e -- 76801
Saint \'Etienne du Rouvray, France\\
Thierry.de-la-Rue@univ-rouen.fr

\end{document}